\newtheorem{thm}{Theorem}[section]
\newtheorem{lma}[thm]{Lemma}
\newtheorem{prp}[thm]{Proposition}
\newtheorem{qns}[thm]{Question}
\newtheorem{clm}{Claim}
\newtheorem*{clm*}{Claim}
\newcommand{\s}[1]{\mathcal{#1}}
\newcommand{\N}{\mathbb{N}}
\newcommand{\Z}{\mathbb{Z}}
\newcommand{\npart}[2]{N^{#1,#2}}
\newcommand{\floor}[1]{\left\lfloor #1 \right\rfloor}
\newcommand{\ceiling}[1]{\left\lceil #1 \right\rceil}
\def\eps{\varepsilon}
\title{Transitive triangle tilings in oriented graphs}
\author{J\'{o}zsef Balogh}
\address{
Department of Mathematics, University of Illinois, Urbana, IL 61801, USA and Bolyai Institute, University of Szeged, Szeged, Hungary.
    Research is partially supported NSF CAREER Grant DMS-0745185, Arnold O. Beckman Research Award (UIUC Campus Research Board 13039) and Marie Curie FP7-PEOPLE-2012-IIF 327763.
  Research is partially supported by NSF Grant DMS-1500121.} 
\email{jobal@math.uiuc.edu}
\author{Allan Lo}
\address{School of Mathematics, University of Birmingham, Birmingham, B15~2TT, UK.
Research is partially supported by the European Research Council
under the ERC Grant Agreement no. 258345.
}
\email{s.a.lo@bham.ac.uk}
\author{Theodore Molla}
\address{Department of Mathematics, University of Illinois, Urbana, IL 61801, USA.
  Research is partially supported by NSF Grant DMS-1500121.} 
\email{molla@illinois.edu}
\date{\today}
\begin{document}

\maketitle

\begin{abstract}
In this paper, we prove an analogue of Corr{\'a}di and Hajnal's classical theorem.
  There exists $n_0$ such that for every $n \in 3\Z$ when
  $n \ge n_0$ the following holds.
  If $G$ is an oriented graph on $n$ vertices and every vertex
  has both indegree and outdegree at least $7n/18$,
  then $G$ contains a perfect transitive triangle tiling,
  which is a collection of vertex-disjoint transitive triangles covering 
  every vertex of~$G$.
  This result is best possible, as, for every $n \in 3\Z$, there exists an oriented graph $G$ on $n$ vertices without a perfect transitive triangle tiling in which every vertex has both indegree and outdegree at least $\lceil 7n/18\rceil - 1.$
\end{abstract}

\section{Introduction}

Let $G$ be an oriented graph, that is
a directed graph without loops such that between every two vertices 
there is at most one edge.
We write $xy$ for an edge directed from $x$ to~$y$.
The \emph{outdegree} $d^+_G(x)$ of a vertex $x$ is the number of vertices $y$ such that $xy \in E(G)$. 
Similarly, the \emph{indegree} $d^-_G(x)$ of a vertex $x$ is the number of vertices $y$ such that $yx \in E(G)$. 
Define the \emph{minimum outdegree} $\delta^+(G)$ of $G$ to be the minimal $d^+_G(x)$ over all vertices $x$ of~$G$, and define the \emph{minimum indegree} $\delta^-(G)$ of $G$ similarly. 
Define the \emph{minimum semidegree} $\delta^0(G)$ of $G$ to be $\min \{ \delta^+(G) , \delta^-(G) \}$.

The oriented graph on $\{v_1, \dotsc, v_n\}$ with edge set 
$\{ v_nv_1 \} \cup \{ v_iv_{i+1} : i \in \{1, \dotsc, n - 1\} \}$ is
the \emph{directed cycle} of length $n$.
An oriented graph in which there is exactly one edge between every pair
of vertices is called a \emph{tournament}.
A tournament that does not contain a directed cycle is  \emph{transitive}.
Up to isomorphism, there are two tournaments on $3$ vertices: 
The directed cycle of length $3$, which we refer to as
the \emph{cyclic triangle}, and the transitive tournament on
$3$ vertices, which we refer to as the \emph{transitive
triangle} or as $TT_3$.

A \emph{tiling} of $G$ is a collection of vertex-disjoint subgraphs
called \emph{tiles}.
If every tile is isomorphic to some oriented graph $H$, then the
tiling is an $H$-\emph{tiling}.
If every vertex in $G$ is contained in a tile,
then the tiling is \emph{perfect}. 
The same definitions are applied to graphs and directed graphs.

In \cite{hajnal1970pcp},
Hajnal and Szemer{\'e}di proved that 
for any $k, r \in \N$ and for any graph $G$ on 
$kr$ vertices if the minimum degree
of $G$ is at least $(r-1)k$, then
$G$ has a perfect $K_r$-tiling.
The case when $r = 3$ was proved earlier by
Corr{\'a}di and Hajnal~\cite{corradi1963maximal}.

The  problem of finding
cyclic triangle tilings in an oriented graph was considered by Keevash and Sudakov~\cite{keevash2009},
who proved  a nearly optimal result:
      For some $\varepsilon > 0$ there exists $n_0$ such that
      if $G$ is an oriented graph on $n \ge n_0$ vertices and 
      $\delta^0(G) \ge (1/2 - \varepsilon)n$, then $G$ contains a
      cyclic triangle tiling that covers all but at most $3$ vertices.
      Furthermore, 
      if $n \equiv 3 \pmod{18}$, then there is a tournament $T$ such
      that $\delta^0(T) \ge (n - 1)/2 - 1$ 
      which does not have a perfect cyclic triangle tiling.
They repeated the following question which was asked by both
Cuckler \cite{cuckler} 
and Yuster \cite{yuster}.
\begin{qns}
  Does every tournament $T$ on $n \equiv 3 \pmod 6$ vertices
  with $\delta^0(T) = (n-1)/2$
  have a perfect cyclic triangle tiling?
\end{qns}
In this paper, we consider the problem of finding a perfect transitive triangle tiling,
proving an analogue to Corr{\'a}di and Hajnal's result for oriented graphs.
\begin{thm} \label{thm:TT3tiling}
  There exists $n_0$ such that for every $n \in 3\Z$ when
  $n \ge n_0$ the following holds.
  If $G$ is an oriented graph on $n$ vertices and $\delta^0(G) \ge 7n/18$,
  then $G$ contains a perfect $TT_3$-tiling.
\end{thm}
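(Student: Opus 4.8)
The plan is to combine the absorbing method with a stability analysis, splitting the argument into a non-extremal case and an extremal case according to the edit distance from an extremal configuration $G^*$ — a blow-up-type construction on $n$ vertices with $\delta^0(G^*) = \floor{7n/18} - 1$ and no perfect $TT_3$-tiling; isolating the right family of such $G^*$ is itself part of the work. Fix constants $0 < \eps \ll \alpha \ll 1$.

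First I would record the local structure forced by $\delta^0(G) \ge 7n/18$. For a vertex $v$, note that $v$ is the source of a transitive triangle iff $G[N^+(v)]$ has an edge, the sink iff $G[N^-(v)]$ has an edge, and the middle iff there is an edge directed from $N^-(v)$ to $N^+(v)$; hence $v$ lies in many transitive triangles of each type \emph{unless} $G$ looks locally like a space barrier, i.e.\ is close to $G^*$. More generally I would prove a \emph{stability lemma}: if an oriented graph $H$ on $m$ vertices has $\delta^0(H) \ge (7/18 - \eps)m$ and $H$ has no \emph{fractional} $TT_3$-tiling covering all but at most $\eps m$ vertices, then $H$ is $\alpha$-close to $G^*$. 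The natural route is LP duality — a maximum fractional $TT_3$-tiling of small weight produces a nonnegative weighting $w \colon V(H) \to \mathbb{R}_{\ge 0}$ with $\sum_v w(v)$ small but every transitive triangle carrying $w$-weight at least $1$, and one analyses which oriented graphs admit such a $w$. I expect this stability lemma, together with the extremal case below, to be the main obstacle: the ratio $7/18$ makes $G^*$ delicate and the LP-duality case analysis lengthy.

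In the non-extremal case ($G$ is $\alpha$-far from $G^*$) I would first build an absorbing set. Using the abundance of transitive triangles of each type through every vertex, I would show that every triple $(x,y,z)$ has $\Omega(n^c)$ pairwise disjoint ``absorbers'' — a candidate absorber being a $6$-set $A$ with $A \cap \{x,y,z\} = \emptyset$ such that both $G[A]$ and $G[A \cup \{x,y,z\}]$ have perfect $TT_3$-tilings — and then a standard random-selection argument yields a set $W$ with $|W| = o(n)$ such that $G[W]$ has a perfect $TT_3$-tiling and $G[W \cup R]$ has a perfect $TT_3$-tiling for every $R \subseteq V(G) \setminus W$ with $3 \mid |R|$ and $|R|$ below some small linear threshold. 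Next, on $G' := G - W$ — which still satisfies $\delta^0(G') \ge (7/18 - o(1))n$ and is still far from $G^*$ — I would produce a $TT_3$-tiling covering all but $o(n)$ vertices: apply Szemer\'edi's Regularity Lemma, pass to the reduced oriented graph, which inherits minimum semidegree close to $\tfrac{7}{18}$ of its order and non-extremality, apply the stability lemma to obtain an almost-spanning fractional $TT_3$-tiling of the reduced graph, and convert it to an almost-perfect integral $TT_3$-tiling of $G'$ in the standard way. Letting $R$ be the uncovered set, we have $3 \mid |R|$ and $|R| = o(n)$, so $W$ absorbs $R$ and $G$ has a perfect $TT_3$-tiling.

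In the extremal case ($G$ is $\alpha$-close to $G^*$) I would argue directly from the nearly explicit structure. A perfect $TT_3$-tiling is then almost forced, and the only genuine difficulty is the divisibility obstruction built into $G^*$: since $\delta^0(G) \ge 7n/18$ is \emph{strictly} larger than $\delta^0(G^*) = \floor{7n/18} - 1$, there must be a bounded number of ``extra'' edges crossing the parts of the near-extremal partition, and I would use a bounded family of these to form correction triangles that rebalance the parts, then complete the tiling greedily inside the parts. The work here is bookkeeping: robustly recovering the near-extremal partition, classifying the few vertices that deviate from it, and verifying that the extra edges always suffice — a finite but intricate check driven by the exact value $7/18$.
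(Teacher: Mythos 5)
Your high-level plan (absorption plus a stability dichotomy) is the same strategy as the paper's, but the two steps you defer are exactly the hard ones, and as written they are genuine gaps. First, the non-extremal case: your route to an almost-perfect tiling goes through the Regularity Lemma, fractional $TT_3$-tilings and an LP-duality stability lemma ("no near-perfect fractional tiling $\Rightarrow$ close to $G^*$"), which you yourself flag as the main obstacle and do not prove; nothing in the proposal pins down the dual weightings or the family $G^*$. The paper needs none of this machinery: it proves directly (by taking a collection of vertex-disjoint transitive triangles, directed $3$-paths, edges and singletons, maximized lexicographically, and counting) that $\delta^0(G)\ge (7/18-\eps)n$ forces a $TT_3$-tiling covering all but at most $14$ vertices unless $G$ contains a $TT_3$-free set of size $(2/3-\alpha)n$ — note also that its extremality notion is "large $TT_3$-free set", a weaker and easier-to-exploit condition than edit-distance to an explicit $G^*$. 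Second, your absorber step asserts that \emph{every} triple $(x,y,z)$ has $\Omega(n^c)$ six-vertex absorbers "using the abundance of transitive triangles of each type through every vertex". At semidegree $7/18$ this is not automatic: for an arbitrary pair of vertices there is no obvious supply of common absorbing structures, and the paper has to work for this, proving a two-stage "linking" lemma (every pair of vertices is $2$-linked) and consequently using $18$-vertex absorbers rather than $6$-vertex ones. Non-extremality of $G$ is a global hypothesis and does not by itself give the local, per-triple abundance you need, so this step is missing an essential idea.

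The extremal case is also underestimated. In the near-extremal structure every transitive triangle must use exactly one vertex of the small class $U$ (of size about $n/3$) and one edge inside the cyclic blow-up on $W$, so after rebalancing you must simultaneously choose a perfect matching of the blow-up and a system of distinct representatives in $U$ so that each matching edge forms a transitive triangle with its representative; a greedy completion inside the parts can get stuck. The paper handles this with a random perfect matching in a superregular bipartite graph (the K\"uhn--Osthus theorem) followed by Hall's theorem, and it rebalances not by counting "extra edges" coming from $\delta^0(G)\ge 7n/18$ exceeding $\lfloor 7n/18\rfloor-1$ (a difference of only $O(1)$ per vertex, which does not by itself locate usable correction triangles) but by explicitly extracting a bounded number of correction triangles from the degree conditions before applying the matching argument. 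So while your outline is directionally reasonable, the proposal as it stands omits the arguments that constitute the actual proof.
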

Treglown \cite{treglown_note} conjectured that
Theorem~\ref{thm:TT3tiling} is true for every $n \in 3\Z$.

The related problems for directed graphs have been considered 
(see \cite{wang00}, \cite{ckm2013}, \cite{cdkm2013} and \cite{treglown_hsz}).

\smallskip

The following family of
examples, 
based on the example of Treglown in \cite{treglown_note},
shows that Theorem~\ref{thm:TT3tiling} is tight.
For every positive $n \in 3 \Z$,
let $G$ be an oriented graph on $n$ vertices and 
$W_1,W_2,W_3,U_1,U_2$ be a partition of $V(G)$ such that
\begin{align*}
|W_i| & = \left\lfloor \frac{2n/3 +i}3 \right\rfloor \text{for $i \le 3$},&
|U_1| &= \left\lfloor \frac{n-3}6 \right\rfloor, &
|U_2| & = \left\lceil \frac{n-3}6 \right\rceil.
\end{align*}
Let the edges of $G$ be all possible directed edges from $W_1$ to $W_2$, from $W_2$ to $W_3$, from $W_3$ to $W_1$, from $U_1$ to $U_2$, from $W_1 \cup W_2$ to $U_1$, from $U_1$ to $W_3$, from $U_2$ to $W_1 \cup W_2$ and from $W_3$ to $U_2$,
see Figure~\ref{fig:extremal}.
Note that, for every $v \in V(G)$,
because $|W_1| \ge |U_1|$ and $|W_2| \ge |U_2|$,
\begin{equation*}
  d^+(v), d^-(v) \ge \min\{|W_1| + |U_2|, |W_2| + |U_1|\},
\end{equation*}
so, for $w \in W_3$, 
\begin{equation*}
  \min\{d^+(w), d^-(w)\} = \min\{|W_1| + |U_2|, |W_2| + |U_1|\} = \delta^0(G) = 
  \ceiling{7n/8} - 1,
\end{equation*}
see Table~\ref{table:examples}.
\begin{table}
  \begin{equation*}
  \begin{array}{l|l|l|l|l|l|l|l}
    n    & |W_1| & |W_2| & |W_3| & |U_1| & |U_2| & \delta^0(G) & \ceiling{7n/18}\\
    \hline
    18m      & 4m     & 4m     & 4m + 1 & 3m - 1 & 3m    & 7m - 1 & 7m \\
    18m + 3  & 4m + 1 & 4m + 1 & 4m + 1 & 3m     & 3m    & 7m + 1 & 7m + 2\\
    18m + 6  & 4m + 1 & 4m + 2 & 4m + 2 & 3m     & 3m + 1 & 7m + 2 & 7m + 3\\
    18m + 9  & 4m + 2 & 4m + 2 & 4m + 3 & 3m + 1 & 3m + 1 & 7m + 3 & 7m + 4\\
    18m + 12 & 4m + 3 & 4m + 3 & 4m + 3 & 3m + 1 & 3m + 2 & 7m + 4 & 7m + 5\\ 
    18m + 15 & 4m + 3 & 4m + 4 & 4m + 4 & 3m + 2 & 3m + 2 & 7m + 5 & 7m + 6.
  \end{array}
  \end{equation*}
  \caption{The order of the sets $W_1, W_2, W_3, U_1$ and 
    $U_2$, $\delta^0(G)$ and 
    $\ceiling{7n/8}$ in the extremal graph for all values of $n \pmod{18}$.}
  \label{table:examples}
\end{table}
Note that $G[W_1 \cup W_2 \cup W_3]$ does not contain a transitive triangle, so
every transitive triangle in $G$ contains a vertex in $U_1 \cup U_2$. Therefore,
the fact that $|U_1 \cup U_2| < n/3$ implies that $G$ does not contain a perfect $TT_3$-tiling.
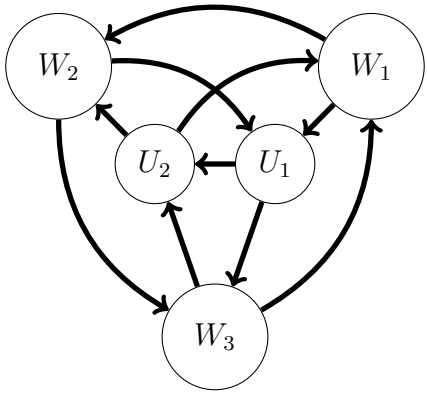
\begin{figure}[tp]
\centering
\begin{tikzpicture}[scale=0.2]
			\node[draw, circle, minimum size = 40]  (w1) at (30:12)  {$W_1$};
			\node[draw, circle, minimum size = 40]  (w2) at (150:12)  {$W_2$};
			\node[draw, circle, minimum size = 40]  (w3) at (-90:12) {$W_3$};
			\node[draw, circle, minimum size = 30]  (u1) at (4,-0.5)  {$U_1$};
			\node[draw, circle, minimum size = 30]  (u2) at (-4,-0.5)  {$U_2$};
			
			\begin{scope}[->, line width=2pt]
			\draw (w1) to [bend right] (w2);
			\draw (w2) to [bend right] (w3);
			\draw (w3) to [bend right] (w1);
			\draw (u1) -- (u2);
			\draw (u1) -- (w3);
			\draw (w1) -- (u1);
			\draw (w2) to [bend left] (u1);
			\draw (w3) -- (u2);
			\draw (u2) to [bend left] (w1);
			\draw (u2) -- (w2);			
			\end{scope}
\end{tikzpicture}
\caption{The extremal graph.}
\label{fig:extremal}
\end{figure}
 
\subsection{Outline of the paper}

We prove Theorem~\ref{thm:TT3tiling} using a stability approach and the
absorption technique.
We say that an oriented graph $G$ on $n$ vertices is 
\emph{$\alpha$-extremal} if there exists $W \subseteq V(G)$
such that $|W| \ge (2/3 - \alpha)n$ and $G[W]$ does not
contain a transitive triangle.

In Section~\ref{sec:non-extremal}, we handle the case when 
 $G$ is not  {$\alpha$-extremal}, i.e.\ we prove the following lemma.

 \begin{lma}
  \label{lma:nonextremal}
  For every $\alpha >0$ there exists $\varepsilon = \varepsilon(\alpha) > 0$
  and $n_0 = n_0(\alpha)$ such that when $G$ is
  an oriented graph on $n \in 3 \Z$ vertices and $n \ge n_0$
  the following holds.
  If $\delta^0(G) \ge (7/18 - \varepsilon)n$, then 
  $G$ has a perfect $TT_3$-tiling or $G$ is $\alpha$-extremal.
\end{lma}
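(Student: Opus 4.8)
The plan is to prove Lemma~\ref{lma:nonextremal} by the absorption method. Throughout, fix $\alpha>0$, take $0<\varepsilon\ll\alpha$, let $n\in 3\Z$ be large, and assume $G$ is an oriented graph on $n$ vertices with $\delta^0(G)\ge(7/18-\varepsilon)n$ that is \emph{not} $\alpha$-extremal; the goal is a perfect $TT_3$-tiling. Two counting facts follow from the semidegree hypothesis alone and will be used constantly: every vertex is adjacent to all but at most $(2/9+2\varepsilon)n$ vertices, and every edge $xy$ extends to a transitive triangle $\{x,y,z\}$ for at least $(1/6-3\varepsilon)n$ choices of $z$ --- indeed if $x\to y$ then every $z\in N^+_G(x)$ adjacent to $y$ works, and $|N^+_G(x)\cap N_G(y)|\ge(7/18-\varepsilon)n-(2/9+2\varepsilon)n=(1/6-3\varepsilon)n$. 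Summing the second fact over the edges at a fixed vertex shows every vertex lies in $\Omega(n^2)$ transitive triangles.

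The first main step is to build an \emph{absorbing set}: a set $M\subseteq V(G)$ with $|M|\le\varepsilon' n$ and $3\mid|M|$ such that for every $U\subseteq V(G)\setminus M$ with $|U|\le\varepsilon'' n$ and $3\mid|U|$, the oriented graph $G[M\cup U]$ has a perfect $TT_3$-tiling. Following the standard scheme this reduces to absorbing one triple at a time: for each unordered triple $T\subseteq V(G)$ one produces a family $\mathcal{A}(T)$ of bounded-size \emph{absorbers} --- vertex sets $S$ disjoint from $T$ with $G[S]$ and $G[S\cup T]$ both perfectly $TT_3$-tileable --- with $|\mathcal{A}(T)|\ge cn^{|S|}$ for a constant $c=c(\alpha)>0$, uniformly over all $T$ (the case that $T$ spans no edges is the worst). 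The counts above give such a family once the right gadget is chosen; the delicate point is that $\delta^0(G)<n/2$, so a freshly chosen vertex cannot be forced to meet two independent adjacency constraints at once (note $1/6+7/9<1$), and it is here that non-extremality is invoked to guarantee the gadget closes up. Given the families $\mathcal{A}(T)$, a random subset of $V(G)$ of the appropriate density, pruned to remove overlaps and trimmed so that $3\mid|M|$, yields $M$; a leftover of size at most $\varepsilon'' n$ is absorbed by assigning its triples to pairwise disjoint absorbers inside $M$.

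The second main step is an almost-perfect $TT_3$-tiling of $G':=G-M$: a $TT_3$-tiling covering all but at most $\varepsilon'' n$ vertices of $G'$, which still has $\delta^0(G')\ge(7/18-2\varepsilon)n$ and is not $(\alpha/2)$-extremal. A \emph{maximal} $TT_3$-tiling of $G'$ leaves an induced $TT_3$-free subgraph, hence one on fewer than $(2/3-\alpha)n$ vertices --- but this is nowhere near $\varepsilon'' n$, so the semidegree hypothesis must be exploited to drive the uncovered set down to $o(n)$. I would do this by passing to the reduced oriented graph: apply the diregularity lemma to $G'$, obtaining a reduced oriented graph $R$ with $\delta^0(R)\ge(7/18-3\varepsilon)|R|$ in which every edge is a regular pair of density bounded below; transfer non-extremality of $G'$ to $R$; show $R$ admits a \emph{perfect fractional} $TT_3$-tiling (a nonnegative weighting of its transitive triangles putting total weight $1$ on each vertex); and blow this fractional tiling up through the regular pairs to get the desired near-perfect integral tiling of $G'$. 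Finally, let $U$ be the set of vertices of $G'$ left uncovered; then $|U|\le\varepsilon'' n$, and $3\mid|U|$ holds automatically because $|V(G')|=n-|M|$ is divisible by $3$ and every tile has size $3$, so the absorbing property of $M$ perfectly $TT_3$-tiles $G[M\cup U]$; together with the near-perfect tiling of $G'$ this is a perfect $TT_3$-tiling of $G$.

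I expect the main obstacle to be the almost-perfect tiling step --- specifically, replacing the easy bound $(2/3-\alpha)n$ on the uncovered set by $o(n)$. This is in essence the fractional relaxation of Theorem~\ref{thm:TT3tiling} in the non-extremal regime, and it is the only point at which the exact threshold $7n/18$ (rather than merely some constant below $2n/3$) must be used; constructing the absorber gadget for arbitrary triples, where the bound $1/6+7/9<1$ forces one to design the gadget with enough internal slack, is the secondary difficulty.
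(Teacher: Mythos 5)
Your high-level architecture (absorbing set, then an almost-perfect tiling of the rest, then absorb the leftover, with the divisibility handled automatically) is the same as the paper's, but both steps that carry the real mathematical content are left unproven, so this is a plan rather than a proof. The main gap is the absorber supply. You write that ``the counts above give such a family once the right gadget is chosen'' and that non-extremality is what ``guarantees the gadget closes up.'' In the paper the corresponding statement (Lemma~\ref{lma:absorbX}, feeding Lemma~\ref{lma:absorption}) requires no non-extremality at all: it holds for every oriented graph with $\delta^0\ge(7/18-\varepsilon)n$, and its proof is not a routine neighbourhood count. Precisely because $1/6+7/9<1$, as you observe, one cannot greedily force a new vertex to satisfy two adjacency constraints, and the paper gets around this by proving that every pair of vertices is ``$2$-linked'' through many short chains of transitive triangles, via a delicate case analysis of the four joint neighbourhoods $N^{\pm}(x)\cap N^{\pm}(y)$ (Claims 1--4 in that proof). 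You neither specify the gadget nor explain how non-extremality would substitute for this analysis, and it is doubtful that it can: non-extremality is a global hypothesis and gives no control over the joint neighbourhoods of one fixed pair $x,y$, which is what the linking argument needs.

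The second step is also only sketched, and here you genuinely diverge from the paper. You propose the diregularity lemma plus a perfect fractional $TT_3$-tiling of the reduced oriented graph, but the existence of that fractional tiling for a non-extremal $R$ with $\delta^0(R)\ge(7/18-3\varepsilon)|R|$ is exactly the crux (the fractional relaxation has the same extremal obstruction), and you only say it should be ``shown''; likewise ``transfer non-extremality of $G'$ to $R$'' is not automatic, since a set of clusters spanning no triangle of dense regular pairs in $R$ does not immediately yield a $TT_3$-free vertex set in $G'$ (edges inside clusters and in sparse or irregular pairs must be cleaned up). The paper avoids regularity entirely: Lemma~\ref{lma:almost} takes a cover maximizing $(|\s{T}|,|\s{P}|,|F|)$ lexicographically and shows by direct counting that either all but at most $14$ vertices are tiled or one exhibits a $TT_3$-free set of size at least $(2/3-48\varepsilon)n$, i.e.\ $G$ is $\alpha$-extremal; getting the uncovered set down to a constant (rather than merely $o(n)$) is what lets the absorbing set finish with room to spare. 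Your regularity route could in principle work, but until the fractional-tiling lemma and the extremality transfer are proved it restates the difficulty rather than resolving it.
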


In Section~\ref{sec:extremal}, we prove Theorem~\ref{thm:TT3tiling} for
  oriented graphs $G$ which are {$\alpha$-extremal}.

  \begin{lma}
  \label{lma:extremal}
  There exists $\alpha > 0$ and $n_0$ such that 
  when $G$ is an oriented graph on $n \in 3 \Z$ vertices
  and $n \ge n_0$ the following holds.
  If $\delta^0(G) \ge {7n}/{18}$ and
  $G$ is $\alpha$-extremal, 
  then there exists a perfect $TT_3$-tiling of $G$.
\end{lma}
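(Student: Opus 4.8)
The plan is to first turn the $\alpha$-extremality into a clean approximate structure, then reduce the tiling problem to a flow/matching problem and solve it.

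\textbf{Step 1: structural clean-up.} Fix $W$ witnessing $\alpha$-extremality and put $R = V(G)\setminus W$, so $|R|\le (1/3+\alpha)n$. The starting point is that an oriented graph is $TT_3$-free if and only if $N^+(v)$ and $N^-(v)$ are independent sets for every vertex $v$, and that in a $TT_3$-free graph no edge runs from $N^-(v)$ to $N^+(v)$ (otherwise, together with $v$, it would span a transitive triangle). Applying this inside $G[W]$ to a vertex $v$, and writing $A=N^+(v)\cap W$, $B=N^-(v)\cap W$, $C=W\setminus(A\cup B\cup\{v\})$, one gets that $A,B$ are independent, every edge between them is oriented $A\to B$, the in-neighbourhood of each $a\in A$ inside $W$ lies in $C\cup\{v\}$, and the out-neighbourhood of each $b\in B$ inside $W$ lies in $C\cup\{v\}$. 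Combined with $\delta^0(G)\ge 7n/18$ and $|R|\le(1/3+\alpha)n$ — which forces every vertex of $W$ to be adjacent to all but at most $(2/9+O(\alpha))n$ vertices of $W$ — this pins down $|A|,|B|,|C|$ each to $(2/9\pm O(\alpha))n$, forces $C$ to be independent, and forces the bipartite pieces $A\to B$, $B\to C$, $C\to A$ to be near-complete. After reassigning $O(\alpha n)$ exceptional vertices to $R$, we obtain a partition $V(G)=W_1\cup W_2\cup W_3\cup R$ with each $W_i$ independent of size $(2/9\pm O(\alpha))n$, with $G[W_i,W_{i+1}]$ near-complete and oriented $W_i\to W_{i+1}$ (indices modulo $3$), and with $|R|=(1/3\pm O(\alpha))n$; moreover, using the degree condition sharply, one shows $|R|\ge n/3$, and this is exactly where the threshold $7n/18$, rather than the value attained in the extremal example, is used.

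\textbf{Step 2: the arithmetic of a tiling, and reduction to the balanced case.} Since $G[W_1\cup W_2\cup W_3]$ is (essentially) a blow-up of a cyclic triangle, it is $TT_3$-free, so every transitive triangle of $G$ uses a vertex of $R$, and any transitive triangle with two vertices in $W_1\cup W_2\cup W_3$ has them in consecutive parts $W_i,W_{i+1}$. A perfect $TT_3$-tiling, which uses about $n/3$ triangles, must therefore consist — apart from $O(\alpha n)$ triangles — of $x_{i,i+1}$ triangles of ``type $(i,i+1)$'', each formed by one vertex of $W_i$, one of $W_{i+1}$ and one of $R$. The covering constraints read $x_{i-1,i}+x_{i,i+1}=|W_i|$, with solution $x_{i,i+1}=(|W_i|+|W_{i+1}|-|W_{i+2}|)/2$, a nonnegative integer because the $|W_i|$ are nearly equal, and $\sum_i x_{i,i+1}=|W|/2$. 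Using the $O(\alpha n)$ free triangles to absorb the exceptional vertices of Step 1, to correct the discrepancy between $|R|$ and $|W|/2$ by a few triangles with two or three vertices in $R$ (which exist by the degree condition), and to make the parts exactly balanced, we reduce to the case $|W_1|=|W_2|=|W_3|=:m$, $|R|=3m/2$, where we must exhibit exactly $m/2$ vertex-disjoint triangles of each of the three types.

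\textbf{Step 3: realising the tiling, and the main obstacle.} For $r\in R$ and a pair $(w_i,w_{i+1})\in W_i\times W_{i+1}$, $r$ completes it to a transitive triangle precisely when $r$ is adjacent to both and avoids the single pattern $r\to w_i$, $w_{i+1}\to r$. The condition $d^+(r),d^-(r)\ge 7n/18$ with $|R|\approx n/3$ makes each $r$ adjacent to all but $(2/9+O(\alpha))n$ vertices of $W$; and by bounding the number of non-adjacent pairs between $R$ and a fixed $W_i$ through the degrees of $W_i$-vertices, all but $O(\alpha n)$ vertices $r\in R$ are adjacent to almost all of each $W_i$, hence usable for at least two of the three types. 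One then assigns each such $r$ a type meeting the prescribed capacities $x_{i,i+1}$ — a capacitated ``assign one allowed type to each $r$'' problem, feasible by Hall's theorem since the only obstruction would be $\Omega(n)$ vertices of $R$ all confined to one type, which the previous sentence excludes — and within each type the near-completeness of $W_i\to W_{i+1}$ and of the $r$--$W$ adjacencies reduces finding the $m/2$ disjoint triangles to a bipartite matching whose Hall condition holds with room to spare; the few leftover $r$'s go into the free triangles of Step 2. The crux is Step 1: extracting the exact balanced blow-up-of-$C_3$ structure and, above all, the inequality $|R|\ge n/3$, since a cruder structural statement would leave an $O(\alpha n)$ slack in $|R|$ that is fatal — a perfect tiling requires $|R|$ to be at least $n/3$ exactly. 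The feasibility of the type-assignment in Step 3 is the secondary difficulty, and it too rests on using $7n/18$ with essentially no loss.
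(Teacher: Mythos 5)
Your Steps 1 and 2 track the paper's proof reasonably well: the paper also extracts a $0$-cyclic partition $W_1,W_2,W_3$ of the $TT_3$-free set $W$ (each part independent, of size between $(2/9-\alpha)n$ and $2n/9$, so indeed $|V(G)\setminus W|\ge n/3$), and then removes $O(\alpha n)$ vertex-disjoint transitive triangles to absorb the atypical vertices and to make the leftover graph exactly balanced. The genuine gap is in Step 3, which is where the real work of the lemma lies. Your justification of the final matching is ``near-completeness of $W_i\to W_{i+1}$ and of the $r$--$W$ adjacencies'' implies ``Hall holds with room to spare''. But adjacency says nothing about orientation: a vertex $r\in R$ can be adjacent to \emph{all} of $W$ and still complete \emph{no} pair of a given type (e.g.\ $r\to x$ for all $x\in W_1$ and $y\to r$ for all $y\in W_2$ kills type $(1,2)$ entirely), and a typical $r$ whose in/out-neighbourhoods split each $W_i$ roughly in half completes only about $3/4$ of the pairs of every type. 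Consequently, once you fix a perfect matching between (halves of) $W_i$ and $W_{i+1}$, the bipartite graph between the $R$-vertices assigned to that type and the matching edges need not be nearly complete, and if the matching is chosen without reference to the orientations at $R$ it can be chosen so that some $r$ is compatible with almost none of its edges; Hall does not follow from the degree information you have. Your preliminary ``usable for at least two types'' claim is an adjacency count and does not resolve this, and the joint problem (choose the $W_i$--$W_{i+1}$ matchings \emph{and} the assignment of $R$-vertices to their edges simultaneously) is exactly what remains unproved.

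This is the point where the paper invests its main technical effort (Lemma~\ref{lma:finish_ext}): it shows each $u\in U$ is compatible with at least a $(2/3-\beta)$-fraction of \emph{all} edges of the cyclic blow-up (via Proposition~\ref{prp:cyctri}, since every cyclic triangle through three neighbours of $u$ yields at least two good edges), then uses the K\"uhn--Osthus random-matching theorem (Theorem~\ref{thm:matching}, after a random splitting and a superregularity check) to select a perfect matching $M$ of $G[W]$ that retains this $2/3$-fraction for \emph{every} $u$ simultaneously, and finally applies Hall through the inequality $2/3+1/2>1$, the $1/2$ coming from the fact that each matching edge has about half of $U$ as in- and out-neighbours --- which is itself a place where $\delta^0\ge 7n/18$ (and not just $\delta\ge 7n/9$, i.e.\ not just $|R|\ge n/3$) is used sharply, contrary to your diagnosis of where the threshold enters. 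Note also that the final Hall condition holds with essentially no room to spare, not ``with room to spare''. Without an argument replacing this random-matching step (or a careful direct construction exploiting, say, anchor vertices $x\in W_i$ with $x\to r$ or $y\in W_{i+1}$ with $r\to y$, which you do not give), Step 3 as written fails.
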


Lemma~\ref{lma:nonextremal} and Lemma~\ref{lma:extremal}
together clearly prove Theorem~\ref{thm:TT3tiling}.

While proving Lemma~\ref{lma:nonextremal} we prove the following
 result which may be of some interest because it applies for
all $n$.  Furthermore, it might be possible
to extend the proof of this theorem 
to prove the main theorem for all $n$.
\begin{thm}
  \label{thm:near_tiling}
  If $G$ is an oriented graph on $n$ vertices
  and $\delta^0(G) \ge {7n}/{18}$,
  then there exists a $TT_3$-tiling of $G$ that
  covers all but at most $11$ vertices.
\end{thm}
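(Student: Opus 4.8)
The plan is to establish Theorem~\ref{thm:near_tiling} by a greedy-plus-adjustment argument that builds a $TT_3$-tiling tile by tile, tracking which vertices remain uncovered. First I would extract from the proof of Lemma~\ref{lma:nonextremal} the structural dichotomy: either $G$ is close to the extremal configuration with a large independent-in-transitivity set $W$ (in the sense that $G[W]$ has no transitive triangle), or $G$ is far from it and the non-extremal machinery produces an almost-perfect tiling directly. In the non-extremal case, the proof of Lemma~\ref{lma:nonextremal} already yields a $TT_3$-tiling covering all but a bounded number of vertices (this is essentially the output of the absorption/regularity argument before the divisibility of $n$ is used), so the content of the theorem for that case is nearly immediate, and I would simply quote the relevant intermediate step with the bound $11$ verified by bookkeeping.

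The substance is therefore the $\alpha$-extremal case, where I would argue \emph{directly} rather than invoking Lemma~\ref{lma:extremal} (which needs $n \in 3\Z$). Fix $W \subseteq V(G)$ with $|W| \ge (2/3 - \alpha)n$ and $G[W]$ transitive-triangle-free, and set $U = V(G) \setminus W$, so $|U| \le (1/3 + \alpha)n$. Every $TT_3$ uses at least one vertex of $U$, so any tiling has at most $|U|$ tiles and covers at most $3|U|$ vertices; the point is to show one can cover all but a constant number. The key structural fact is that since $\delta^0(G) \ge 7n/18$ is only slightly less than $|W|/2 \approx n/3$ minus the deficit, each vertex $u \in U$ has many in-neighbours and many out-neighbours inside $W$, so there are many transitive triangles of the form $w_1 u w_2$ or $w_1 w_2 u$ with $w_1, w_2 \in W$; moreover $G[W]$ being $TT_3$-free forces $W$ to look like a blow-up of a transitive tournament orientation with a controlled structure, which lets me match up vertices of $U$ to disjoint pairs in $W$ efficiently. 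I would run a defect-version of Hall's theorem / a greedy matching between $U$ and pairs in $W$: process the vertices of $U$ one at a time, each time selecting a transitive triangle through the current $u$ using two previously-unused vertices of $W$, which is possible as long as at least some bounded number of $W$-vertices remain, since $d^{\pm}(u) \cap W$ is linear in $n$. This leaves $|W| - 2|U|$ vertices of $W$ uncovered plus possibly a few vertices of $U$; because $|U| \le (1/3+\alpha)n$ and $|W| \ge (2/3-\alpha)n$ we get $|W| - 2|U| \ge -3\alpha n$, so one must be more careful — not every $u$ gets a private pair. To fix this I would instead pair up vertices of $U$ among themselves and with $W$: use edges inside $U$ together with a common neighbour in $W$, or triples inside $U$, to form additional $TT_3$'s, reducing the number of $W$-vertices needed per uncovered $U$-vertex; a careful count using $\delta^0(G) \ge 7n/18$ and $|U| \approx n/3$ shows the two linear quantities balance to within an additive constant, which I would pin down to $11$.

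The main obstacle is exactly this balancing in the extremal case: $7n/18$ is the threshold precisely because the extremal graph shows a perfect tiling can fail, so the near-tiling bound is tight up to the additive constant, and proving the constant is as small as $11$ requires a delicate analysis of how vertices of $U$ can be grouped and which vertices of $W$ must be ``wasted.'' I expect to need a short case analysis on the internal structure of $G[U]$ and on the orientation pattern between $U$ and $W$ (the extremal graph suggests $U$ splits into two parts $U_1, U_2$ with prescribed edges to the parts of $W$), using the semidegree condition to guarantee enough triangles of each needed type. Everything else — the regularity/absorption input, the reduction to the two cases, the greedy triangle selection — is routine given the earlier lemmas, and I would present the extremal count in a single lemma-style paragraph with the arithmetic deferred to a displayed inequality chain rather than worked line by line.
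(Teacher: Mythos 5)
There is a genuine gap, and in fact your plan is structured around a circularity. In the paper, Theorem~\ref{thm:near_tiling} is \emph{not} derived from the extremal/non-extremal dichotomy; it is proved directly (simultaneously with Lemma~\ref{lma:almost}) by an elementary argument valid for every $n$: take a decomposition $\s{M} = \s{T} \cup \s{P} \cup F \cup I$ of $V(G)$ into transitive triangles, directed paths on $3$ vertices, edges and singletons, chosen to maximize $(|\s{T}|, |\s{P}|, |F|)$ lexicographically, and then use counting arguments (the auxiliary bipartite graph between $\s{P}$ and $V(G)$ via Proposition~\ref{prp:deg}, a weight function for the case of two leftover edges, and an independence argument for $I$) to show that with $\delta^0(G) \ge 7n/18$ one has $|\s{P}| \le 2$, $|F| \le 1$ and $|I| \le 3$, i.e.\ at most $3\cdot 2 + 2 + 3 = 11$ uncovered vertices. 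Your proposal instead wants to ``quote the relevant intermediate step'' of the non-extremal machinery that yields an almost-perfect tiling --- but that intermediate step \emph{is} Lemma~\ref{lma:almost}, whose $\varepsilon = 0$ case is exactly the statement you are asked to prove; the absorbing lemma by itself produces no almost-tiling. Moreover, even granting that machinery, the dichotomy at $\varepsilon > 0$ only gives $14$ uncovered vertices or extremality, so ``bookkeeping'' cannot recover the bound $11$ without redoing the counting at $\varepsilon = 0$, which is precisely where the paper's argument needs no extremality at all.

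The extremal half of your plan is also not an argument yet. The claim that a $TT_3$-free set $W$ ``looks like a blow-up of a transitive tournament orientation'' is wrong --- the forced structure is a blow-up of a \emph{cyclic} triangle (Claim~3 in the paper's Section~\ref{sec:extremal}) --- and the decisive step, that the greedy matching of $U$-vertices to pairs in $W$ plus triangles using two $U$-vertices ``balances to within an additive constant, which I would pin down to 11,'' is exactly the delicate part and is left unproved; making it rigorous amounts to reproving a near-tiling version of Lemma~\ref{lma:extremal}, which in the paper takes several pages of careful structural analysis. Finally, any route through absorption, regularity or $\alpha$-extremality only works for $n \ge n_0$, whereas Theorem~\ref{thm:near_tiling} is stated (and proved) for all $n$; your approach would need a separate treatment of small $n$ that the sketch does not provide. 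The missing idea, in short, is the lexicographically maximal mixed decomposition and the three counting lemmas bounding the leftover paths, edges and singletons, which is what actually produces the constant $11$.
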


\subsection{Notation}
Given a graph or digraph $G$, we write $V(G)$ for its vertex set, $E(G)$ for its edge set, and $e(G)=|E(G)|$ for the number of its edges.
Given a collection $\mathcal{T}$ of subgraphs, 
we write $V(\mathcal{T})$ for $\bigcup_{T \in \mathcal{T}} V(T)$.
When $\mathcal{W}$ is a collection of vertex subsets we will also use the
notation $V(\mathcal{W})$ to denote $\bigcup_{W \in \mathcal{W}} W$.

Suppose that $G$ is an oriented graph.
If $x$ is a vertex of $G$, then $N^+_G(x)$ denotes the \emph{out-neighborhood} of $x$, i.e.\ the
set of all those vertices $y$ for which $xy\in E(G)$.
Similarly, $N^-_G(x)$ denotes the \emph{in-neighborhood} of $x$, i.e.\ the set of all those vertices $y$ for which $yx\in E(G)$.
Note that  $d^+_G(x) =|N^+_G(x)|$ and $d^-_G(x) = |N^-_G(x)|$. 
We write $N_G(x) = N^+_G(x) \cup N^-_G(x)$ and $d_G(x) = d^+_G(x) + d^-_G(x)$.
We write $\delta(G)$ and $\Delta(G)$ for the minimum degree and
maximum degree of the 
underlying undirected graph of $G$, respectively.
Given a vertex $v$ of $G$ and a set $A\subseteq V(G)$, we define 
$N^+_G(v, A) = N^+_G(v) \cap A$ and $d^+_G(v,A) = |N^+_G(v, A)|$ and define $N^-(v, A)$, $d^-_G(v,A)$, $N_G(v,A)$ and $d_G(v,A)$ similarly.
Given $A,B \subseteq V(G)$, 
let $\overrightarrow{E}_G(A,B)$ be the set of
edges in $G$ directed from $A$ to $B$.
Similarly, $E_G(A,B)$ denotes the set of
edges with one endpoint  in $A$ and the other in $B$  
and let $e_G(A, B) = |E_G(A, B)|$.
For a vertex $v$, we write $E_G(v)$ for $E_G(v, V(G))$.
For a vertex set $A \subseteq V(G)$, we write $G[A]$ for the subgraph of $G$
induced by $A$ and let $e_G(A) = e(G[A])$.
If $G$ is known from the context, then we may omit the subscript.
We let $G[A, B]$ be the bipartite graph in which $a \in A$ is adjacent
to $b \in B$ if and only if $ab \in E(G)$ or $ba \in E(G)$.
If $x,y,z \in V(G)$ 
we sometimes refer to $G[\{x, y, z\}]$ as $xyz$ or as $xe$,
where $e = yz$ or $e = zy$. 
If we refer to a directed path or a cyclic triangle as $xyz$,
then it must contain the edge set $\{xy, yz\}$ or
the edge set $\{xy, yz, zx\}$, respectively.

For   $m \in \N$ we write
$[m] = \{1, \dotsc, m\}$.
For any set $V$ we let $\binom{V}{m}$ be the collection of
subsets of $V$ that are of order $m$.
When it is clear that a variable $i$ must remain in $[m]$
(e.g. when $i$ is the index of $W_1, \dotsc, W_m$)
we let $i + 1 = 1$ when $i=m$ and $i - 1 = m$ when $i=1$.


\subsection{Preliminary lemmas and propositions}

Let $G$ be an oriented graph. 
Note that if $uw$ is an edge in $G[N^+(v)]$, then $vuw$ is a 
transitive triangle.
We get the following easy proposition.
\begin{prp} \label{prp:TT4}
Let $G$ be a tournament on $4$ vertices.
Then every vertex of $G$ is contained in a transitive triangle. 
\end{prp}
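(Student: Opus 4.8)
The plan is to analyze a tournament $G$ on four vertices $\{a,b,c,d\}$ and show every vertex lies in a transitive triangle. Since any tournament on three vertices is either a cyclic triangle or a transitive triangle, the key observation is that a cyclic triangle cannot coexist with "too much regularity": if three of the four vertices form a cyclic triangle, the fourth vertex breaks the symmetry. Concretely, fix a vertex $v$; I want to find an edge inside $G[N^+(v)]$ or inside $G[N^-(v)]$, since either produces a transitive triangle through $v$ (an edge $uw$ in $G[N^+(v)]$ gives the transitive triangle $vuw$, and symmetrically an edge $uw$ in $G[N^-(v)]$ gives $uwv$).

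First I would note that in any tournament on $n \ge 2$ vertices the degrees (out-degrees) sum to $\binom{n}{2}$, so for $n=4$ the out-degrees sum to $6$; hence some vertex $v$ has $d^+(v) \ge 2$, and by the symmetric argument applied to the reverse tournament some vertex has $d^-(v)\ge 2$. If $d^+_G(v)\ge 2$, then $N^+(v)$ has at least two vertices, which are joined by an edge (tournament), giving a transitive triangle containing $v$; similarly if $d^-_G(v)\ge 2$. So the only vertices not immediately handled are those $v$ with $d^+_G(v)\le 1$ and $d^-_G(v)\le 1$; since $d^+_G(v)+d^-_G(v)=3$, this is impossible — every vertex has out-degree at least $2$ or in-degree at least $2$. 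That already finishes it: for each vertex $v$, one of $d^+_G(v)\ge 2$ or $d^-_G(v)\ge 2$ holds, and in either case the two neighbors in that out- or in-neighborhood span an edge, producing a transitive triangle through $v$.

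So the proof is essentially a one-line degree count, and I would write it exactly that way: for every vertex $v$ of $G$, $d^+_G(v)+d^-_G(v)=3$, so $\max\{d^+_G(v),d^-_G(v)\}\ge 2$; if $d^+_G(v)\ge 2$ pick any two vertices $u,w\in N^+_G(v)$, and since $G$ is a tournament there is an edge between them, say $uw$, whence $vuw$ is a transitive triangle; if instead $d^-_G(v)\ge 2$ pick $u,w\in N^-_G(v)$ with edge $uw$ and then $uwv$ is a transitive triangle.

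There is no real obstacle here — this is the preliminary observation already flagged before the statement ("if $uw$ is an edge in $G[N^+(v)]$, then $vuw$ is a transitive triangle"), and the only thing to verify is the trivial arithmetic $d^+_G(v)+d^-_G(v)=3$ forcing one of the two neighborhoods to have size at least $2$. The only mild care needed is to state both the out- and in-neighborhood cases, since a vertex that is a source among three of the four vertices needs the out-neighborhood argument while a sink needs the in-neighborhood one.
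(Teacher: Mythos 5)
Your proof is correct and is essentially the argument the paper intends: each vertex of a $4$-vertex tournament has total degree $3$, so its out- or in-neighborhood contains two vertices, and the edge between them (the paper's remark preceding the proposition, applied to $N^+(v)$ or symmetrically to $N^-(v)$) yields a transitive triangle through that vertex. The opening digression about out-degrees summing to $6$ is superfluous, but the final one-line degree count is exactly right.
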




\begin{prp} \label{prp:deg}
Let $G$ be an oriented graph on $n$ vertices.
Then
\begin{itemize}
	\item[{\rm (a)}] every (directed) edge $uv$ is contained in at least $3 \delta^0 (G) - n $ transitive triangles $uvw$ such that $w \in N^{-}(v)$;
	\item[{\rm (b)}] every (directed) edge $uv$ is contained in at least $3 \delta^0 (G) - n $ transitive triangles $uvw$ such that $w \in N^{+}(u)$;
	\item[{\rm (c)}] for every directed path $uvw$ on $3$ vertices, there are at least $2(3 \delta^0 (G) - n )$ vertices $x$ such that there exists a transitive triangle in $G[\{u,v,w,x\}]$ containing $x$ and $v$.
\end{itemize}
\end{prp}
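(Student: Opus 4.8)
The plan is to prove (a) directly, deduce (b) by the mirror-image argument (equivalently, by reversing every edge of $G$, which preserves $\delta^0$), and then obtain (c) by applying (a) to one edge of the path and (b) to the other.

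For (a), fix a directed edge $uv$. The key observation is that a tournament on three vertices fails to be transitive only if it is a cyclic triangle, in which case every vertex has in-degree $1$; hence any three-vertex tournament containing a vertex of in-degree $2$ is transitive. Now take any $w \in N^-_G(v) \cap N_G(u)$: then $G[\{u,v,w\}]$ contains the edges $uv$ and $wv$ together with one of $uw, wu$, so it is a tournament in which $v$ has in-degree $2$, and therefore it is a transitive triangle (with $w \in N^-(v)$, as desired). Also $w \ne u$ because $u \notin N_G(u)$, and $w \ne v$ because $v \notin N^-_G(v)$, so $w$ is a genuine third vertex. It remains to bound $|N^-_G(v) \cap N_G(u)|$ from below: since $G$ is oriented, $N^+_G(u)$ and $N^-_G(u)$ are disjoint, so $|N_G(u)| = d^+_G(u)+d^-_G(u) \ge 2\delta^0(G)$, while $|N^-_G(v)| \ge \delta^0(G)$, and inclusion--exclusion gives $|N^-_G(v) \cap N_G(u)| \ge |N^-_G(v)| + |N_G(u)| - n \ge 3\delta^0(G) - n$. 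Part (b) is the symmetric statement, proved by counting $w \in N^+_G(u) \cap N_G(v)$ and using that a three-vertex tournament with a vertex of out-degree $2$ is transitive.

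For (c), let $uvw$ be a directed path, so that $uv, vw \in E(G)$. Let $A$ be the set of vertices $x \in N^-_G(v)$ for which $G[\{u,v,x\}]$ is a transitive triangle, and let $B$ be the set of vertices $x \in N^+_G(v)$ for which $G[\{v,w,x\}]$ is a transitive triangle. Applying (a) to $uv$ gives $|A| \ge 3\delta^0(G)-n$, and applying (b) to $vw$ gives $|B| \ge 3\delta^0(G)-n$. Since $A \subseteq N^-_G(v)$ and $B \subseteq N^+_G(v)$ are disjoint, so are $A$ and $B$, whence $|A \cup B| \ge 2(3\delta^0(G)-n)$; and every $x \in A \cup B$ has the stated property because the corresponding transitive triangle lies in $G[\{u,v,w,x\}]$ and contains both $x$ and $v$.

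I do not anticipate a real obstacle. The two things that require a little care are: noticing that the direction of the ``third'' edge of each triangle is irrelevant, since a three-vertex tournament with a vertex of in-degree (resp.\ out-degree) $2$ is automatically transitive --- this is precisely what yields the coefficient $3$ rather than $2$ in $3\delta^0(G)-n$; and verifying that $A$ and $B$ in the proof of (c) are disjoint, so that the bounds from (a) and (b) may be added. When $3\delta^0(G)-n \le 0$ all three statements are vacuous.
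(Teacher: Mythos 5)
Your proposal is correct and follows essentially the same argument as the paper: count $w \in N_G(u)\cap N^-_G(v)$ (resp.\ $N^+_G(u)\cap N_G(v)$) via inclusion--exclusion to get $3\delta^0(G)-n$, and for (c) add the two bounds using that $N^-_G(v)$ and $N^+_G(v)$ are disjoint. The only difference is that you spell out explicitly why the orientation of the third edge is irrelevant, which the paper leaves implicit.
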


\begin{proof}
Let $uv$ be an edge in $G$.
Note that every vertex $w$ in $N(u) \cap N^-(v)$ forms a transitive triangle with $uv$. 
Since $|N(u) \cap N^-(v)| \ge \delta (G) + \delta^-(G) -n \ge 3 \delta^0 (G) - n $, (a) follows.
By a similar argument, (b) also holds.

Let $uvw$ be a directed path on $3$ vertices.
By (a), there is a set $U \subseteq N^-(v)$ with $|U| \ge 3 \delta^0 (G) - n$ such that every $u' \in U$ forms a transitive triangle with $uv$.
By (b), there is a set $W \subseteq N^+(v)$ with $|W| \ge 3 \delta^0 (G) - n$ such that every $w' \in W$ forms a transitive triangle with $vw$.
Since $U \cap W = \emptyset$, (c) holds.
\end{proof}

\section{Non-extremal case}
\label{sec:non-extremal}

\subsection{Absorbing structure}

In this section, we prove Lemma~\ref{lma:absorption}.
Roughly speaking, the lemma states that there exists a small vertex set $U \subseteq V(G)$ such that $G[U \cup W]$ contains a perfect $TT_3$-tiling for every small $W \subseteq V(G) \setminus U$.
Thus, in order to find a perfect $TT_3$-tiling in $G$, it is suffices to find a $TT_3$-tiling covering almost all vertices in $G[V(G) \setminus U]$.
This technique was introduced by R\"odl, Ruci\'{n}ski and Szemer\'{e}di~\cite{MR2500161} to obtain results on matchings in hypergraphs.

For any $r \in \N$ and
any collection $\s{H}$ of oriented graphs on $[r]$,
define $\s{F}(\s{H}, G)$ to be the set of functions $f$
from $[r]$ to $V(G)$ such that $f$ is a directed graph homomorphism from
some $H \in \s{H}$ to $G$.
Let $\s{K}$ be the set of oriented graphs $K$ on $\{1,\dotsc,21\}$ such that both
$K$ and $K[\{1, \dotsc, 18\}]$ have a perfect $TT_3$-tiling.
For any ordered triple $X = (x_1, x_2, x_3)$ of vertices in $G$,
let $\s{A}'(X)$ be the set of functions $f \in \s{F}(\s{K}, G)$
such that $f(19) = x_1$, $f(20) = x_2$ and $f(21) = x_3$.
Let $\s{A}(X)$ be the set of functions 
in $\s{A}'(X)$ restricted to $[18]$.
Clearly $|\s{A}(X)| = |\s{A}'(X)|$.
Note that we do not require the functions in $\s{F}(\s{H}, G)$ to
be injective, but at a later stage of the proof
non-injective functions will essentially be discarded.
We consider non-injective functions only 
to make the following arguments simpler.

\begin{lma}
  \label{lma:absorbX}
  For $\eps_0 = 1/250$
  and $0 \le \varepsilon \le \eps_0$, 
  there exists $\tau = \tau(\varepsilon) > 0$ and 
  $n_0 = n_0(\varepsilon)$ such that the following holds.
  If $G$ is an oriented graph on $n \ge n_0$ vertices
  and $\delta^0(G) \ge (7/18 - \varepsilon)n$, then
  $|\s{A}(X)| \ge \tau n^{18}$ for every ordered triple $X = (x_1,x_2,x_3)$ of vertices in $G$.
\end{lma}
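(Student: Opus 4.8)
The plan is to build, for an arbitrary ordered triple $X=(x_1,x_2,x_3)$, many homomorphisms $f\in\s{A}'(X)$ greedily, using Proposition~\ref{prp:deg} repeatedly to count extensions at each step; the key point is that at every stage the number of valid choices for the next vertex is $\Omega(n)$, so after $18$ steps we obtain $\Omega(n^{18})$ functions. First I would observe that it suffices to produce $\Omega(n^{18})$ functions $f$ on $[21]$ that realize some $K\in\s{K}$ with $f(19)=x_1,f(20)=x_2,f(21)=x_3$, since restriction to $[18]$ is a bijection onto $\s{A}(X)$. Concretely, I would fix a convenient member $K_0$ of $\s{K}$: on $\{1,\dots,21\}$ take seven vertex-disjoint transitive triangles $\{3j-2,3j-1,3j\}$ for $j\in[7]$ (oriented so each is a $TT_3$), with the last triangle $\{19,20,21\}$ playing the role of $X$; this $K_0$ lies in $\s{K}$ because $K_0$ and $K_0[\{1,\dots,18\}]$ are already perfect $TT_3$-tilings. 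Then $\s{A}'(X)$ contains every $f$ that embeds $\{3j-2,3j-1,3j\}\mapsto$ a transitive triangle of $G$ for $j\in[6]$, with $f(19),f(20),f(21)=x_1,x_2,x_3$ — no further edges between the blocks are required since $K_0$ has none.

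Next I would count such $f$. The six blocks $j\in[6]$ are independent of each other and of $X$, so it is enough to show $G$ has $\Omega(n^3)$ ordered transitive triangles; then multiplying the six counts gives $\Omega(n^{18})$, with $\tau=\tau(\eps)$ depending only on the implied constant. To count ordered transitive triangles: pick any vertex $u$ (there are $n$ choices); by $\delta^0(G)\ge(7/18-\eps)n$ we have $d^+(u)\ge(7/18-\eps)n$, so pick $v\in N^+(u)$; now by Proposition~\ref{prp:deg}(b) the edge $uv$ lies in at least $3\delta^0(G)-n\ge(7/6-3\eps)n-n=(1/6-3\eps)n$ transitive triangles $uvw$ with $w\in N^+(u)$, and for $\eps\le\eps_0=1/250$ this is at least $(1/6-3/250)n\ge n/7$, a positive constant fraction of $n$. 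Hence $G$ has at least $n\cdot(7/18-\eps)n\cdot(n/7)\ge cn^3$ ordered transitive triangles for an absolute constant $c>0$, and therefore $|\s{A}'(X)|=|\s{A}(X)|\ge (cn^3)^6 = c^6 n^{18}$; take $\tau=c^6/2$, say, absorbing lower-order error terms into the choice of $n_0$.

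I do not expect a serious obstacle here: the statement is deliberately loose — the set $\s{K}$ is large, the homomorphisms need not be injective, and the target $\tau n^{18}$ allows any positive constant. The only mild care required is bookkeeping: confirming that the block structure imposes no cross-edges so that the counts genuinely multiply, and checking the arithmetic $3\delta^0(G)-n\ge(1/6-3\eps)n>0$ for all $\eps\le1/250$ so that Proposition~\ref{prp:deg}(b) yields a linear-in-$n$ bound. One could alternatively invoke Proposition~\ref{prp:deg}(c) or build the triangles as $u\in V(G)$, $v\in N^+(u)$, $w$ an out-neighbor of $u$ adjacent appropriately to $v$, but the counting is the same. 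The genuinely substantive work of the paper lies in the absorbing lemma that uses this estimate (Lemma~\ref{lma:absorption}) and in the non-extremal near-tiling argument, not in this counting step.
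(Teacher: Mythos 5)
There is a genuine gap, and it is at the heart of the lemma rather than in the bookkeeping. Your chosen $K_0$ (seven pairwise disjoint transitive triangles, with $\{19,20,21\}$ one of them) does have the property that $K_0\in\s{K}$, but a homomorphism from $K_0$ must map the block $\{19,20,21\}$ onto a transitive triangle of $G$. The lemma, however, is claimed for \emph{every} ordered triple $X=(x_1,x_2,x_3)$, and in the application (absorbing leftover vertices in Lemma~\ref{lma:absorption}) the three vertices $x_1,x_2,x_3$ will typically span no edges at all. For such a triple, $\s{F}(\{K_0\},G)$ contributes nothing to $\s{A}'(X)$, so your count of $\Omega(n^{18})$ functions is a count of the wrong objects; the claim ``no further edges between the blocks are required'' overlooks the edges \emph{inside} the block $\{19,20,21\}$ of your $K_0$. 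More generally, any $K\in\s{K}$ must cover $19,20,21$ by its perfect tiling, so if $x_1,x_2,x_3$ are nonadjacent the triangles covering them must reach into $[18]$; hence the 18 absorber vertices have to be chosen adapted to the actual in-/out-neighbourhoods of $x_1,x_2,x_3$ in $G$, and simultaneously the 18 vertices alone must still tile. Producing $\Omega(n^{18})$ such adapted configurations for an arbitrary triple is exactly the nontrivial content of the lemma, and it is where the specific threshold $7/18-\varepsilon$ with $\varepsilon\le 1/250$ is used; your argument only uses that transitive triangles are abundant, which would hold under far weaker degree assumptions and cannot suffice.

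For comparison, the paper's proof introduces the gadget families $\s{L}_p$ and the notion of $p$-linked pairs: a structure in $\s{C}_p(x,y)$ can absorb either endpoint but not both, because both $L[\{1,\dots,3p\}]$ and $L[\{2,\dots,3p+1\}]$ tile. It then builds each $f\in\s{A}'(X)$ from a central transitive triangle $f\in\s{T}$ together with one 2-linking structure from $f(i)$ to $x_i$ for each $i$, and the bulk of the work is the case analysis (via the quadrants $\npart{i}{j}(x,y)$ of common neighbourhoods and Claims 1--4) showing that \emph{every} pair of vertices is $2$-linked under the stated semidegree condition. Your counting of transitive triangles via Proposition~\ref{prp:deg} is correct as far as it goes, and it reappears in the paper as the bound $|\s{T}|\ge(\beta n)^3$, but on its own it does not connect the abundant triangles to an arbitrary, possibly edgeless, triple $X$; that connection is the missing idea.
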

\begin{proof}
  Let $0 < \beta < (1/249 - \varepsilon)/10$ and $\tau = \beta^{18}$.
  Let $\s{T}$ be the set of functions from $\{1,2,3\}$ to $V(G)$
  that are digraph homomorphisms from a transitive triangle on $\{1,2,3\}$
  to~$G$. In other words, $\s{T}$ contains all functions from $\{1,2,3\}$ to 
  $V(G)$ whose image induces a transitive triangle.
  If we let $f(1)$ be any vertex $a \in V(G)$ and let $f(2)$ be any
  $b \in N_G(a)$, by Proposition~\ref{prp:deg},
  there are $3\delta^0(G) - n \ge (1/6 - 3\varepsilon)n$ 
  vertices we can assign to $f(3)$ so that $f \in \s{T}$. 
  This gives us that
  \begin{equation}
    \label{eq:number_of_TT3s}
    |\s{T}| \ge n \cdot (7/9 - 2 \varepsilon)n 
    \cdot (1/6 - 3\varepsilon)n > n^3/9 > (\beta n)^3.
  \end{equation}

  For any $p \ge 1$,
  let $\s{L}_p$ be the set of oriented graphs $L$ on $[3p+1]$
  such that both $L[\{2, \dotsc ,3p + 1\}]$ and 
  $L[\{1, \dotsc, 3p\}]$
  have perfect $TT_3$-tilings
  (see Figure~\ref{fig:L_1_and_L_2} for some examples).
  For any $p \ge 1$ and $x,y \in V(G)$ (we allow $x = y$), 
  we let $\s{C}_p(x,y)$ be the set of $f \in \s{F}(\s{L}_p, G)$
  such that $f(1) = x$ and $f(3p+1) = y$,
  and we say that $x$ and $y$ are 
  \textit{$p$-linked} if 
  $|\s{C}_p(x,y)| \ge (\beta n)^{3p - 1}$.

  \begin{figure}[b]
    \begin{minipage}{0.45\textwidth}
      \centering
      \begin{tikzpicture}
        [
          vertex/.style={circle,fill=black,minimum size=6pt,inner sep=0pt}
        ]

        \node [vertex,label=above:{$1$}] (v1) {};
        \node [vertex,above right=of v1,label=above:{$2$}] (v2) {};
        \node [vertex,below right=of v1,label=below:{$3$}] (v3) {};
        \node [vertex,above right=of v3,label=above:{$4$}] (v4) {};
        \begin{scope}[->,thick,>=stealth]
          \draw (v1) to (v2);
          \draw (v1) to (v3);
          \draw (v2) to (v3);
          \draw (v4) to (v3);
          \draw (v2) to (v4);
        \end{scope}
      \end{tikzpicture}
      \hspace{0.5in}
      \begin{tikzpicture}
        [
          vertex/.style={circle,fill=black,minimum size=6pt,inner sep=0pt}
        ]
        \node [vertex,label=above:{$1$}] (v1) {};
        \node [vertex,above right=20pt of v1,label=above:{$3$}] (v3) {};
        \node [vertex,below right=20pt of v3,label=below:{$4$}] (v4) {};
        \node [vertex,above right=20pt of v3,label=above:{$5$}] (v5) {};
        \node [vertex,below=20pt of v4,label=below:{$2$}] (v2) {};
        \node [vertex,below right=20pt of v5,label=above:{$6$}] (v6) {};
        \node [vertex,below right=20pt of v6,label=above:{$7$}] (v7) {};
        \begin{scope}[->,thick,>=stealth]
          \draw (v1) to (v2);
          \draw (v1) to (v3);
          \draw (v3) to (v2);
          \draw (v3) to (v5);
          \draw (v4) to (v3);
          \draw (v6) to (v4);
          \draw (v2) to (v6);
          \draw (v2) to (v7);
          \draw (v7) to (v6);
          \draw (v6) to (v5);
          \draw (v4) to (v5);
        \end{scope}
      \end{tikzpicture}
      \caption{A graph in $\s{L}_1$ and a graph in $\s{L}_2$.}
      \label{fig:L_1_and_L_2}
    \end{minipage}
    \begin{minipage}{0.45\textwidth}
      \centering
      \begin{tikzpicture}
        \node [draw, rectangle, minimum size=70pt] (A1) {$\npart{+}{+}(x,y)$};
        \node [draw, rectangle, minimum size=70pt,right=-.5pt of A1] (B2)
        {$\npart{-}{+}(x,y)$};
        \node [draw, rectangle, minimum size=70pt,below=-.5pt of A1] (B1)
        {$\npart{+}{-}(x,y)$};
        \node [draw, rectangle, minimum size=70pt,right=-.5pt of B1] (A2)
        {$\npart{-}{-}(x,y)$};
        \node [draw, draw opacity=0, circle, minimum size=55pt, at=(A1)] (A1C) {};
        \node [draw, draw opacity=0, circle, minimum size=55pt, at=(A2)] (A2C) {};
        \node [draw, draw opacity=0, circle, minimum size=55pt, at=(B1)] (B1C) {};
        \node [draw, draw opacity=0, circle, minimum size=55pt, at=(B2)] (B2C) {};
        \begin{scope}[->, line width=2pt,>=stealth]
          \draw (A1C) to (B2C);
          \draw (A1C) to (B1C);
          \draw (A1C) to (A2C);
          \draw (B1C) to (A2C);
          \draw (B2C) to (A2C);
          \draw [<->] (B1C) to (B2C);
        \end{scope}
      \end{tikzpicture}
      \caption{The possible orientations of edges in $G'[N(x,y)]$.}
      \label{fig:partition}
    \end{minipage}
  \end{figure} 

  For any $q > p \ge 1$,
  $f \in C_p(x,y)$ and $g_i \in \mathcal{T}$ for $i \in [q - p]$,
  the function
  \begin{equation*}
    h(j) = 
    \begin{cases} 
      x &\text{if $j = 1$} \\
      f(j) &\text{if $2 \le j \le 3p$} \\
      g_i(k) &\text{if $j = 3p+3(i-1) + k$ for some $i \in [q-p]$ and $k \in
      [3]$} \\
      y &\text{if $j=3q+1$}
    \end{cases}
  \end{equation*}
  is in $\s{C}_q(x,y)$. 
  Therefore, 
  \begin{equation*}
    |\s{C}_q(x,y)| \ge |\s{C}_p(x,y)||\s{T}|^{q-p} \text{ for any $q > p$}.
  \end{equation*}
  With \eqref{eq:number_of_TT3s}, this implies that,
  if $x$ and $y$ are $p$-linked,
  then $x$ and $y$ are also $q$-linked for any $q > p$.
  Recall that we do not require the functions in $\s{C}_q(x,y)$ to be
  injective.

  Let $X = (x_1, x_2, x_3)$ be an ordered triple of vertices in $G$,
  $f \in \s{T}$ and $g_i \in \s{C}_2(f(i), x_i)$ for $i \in [3]$.
  Define $h : [21] \to V(G)$ by
  \begin{equation*}
    h(j) = 
    \begin{cases} 
      g_i(k) &\text{if $j = 6(i-1) + k$ for some $i \in [3]$ and $k \in
      [6]$}\\
      x_i &\text{if $j=18 + i$ for some $i \in [3]$}.
    \end{cases}
  \end{equation*}
  By the definition of $\s{C}_2$ and the fact that
  the image $h(\{1, 7, 13\}) = f([3])$ induces a transitive triangle in $G$, we have that
  $h \in \s{A}'(X)$.
  Therefore, 
  \begin{equation*}
    |\s{A}(X)| = 
    |\s{A}'(X)| \ge
    \sum_{f \in \s{T}}\prod_{i \in [3]} |\s{C}_2(f(i), x_i)|,
  \end{equation*}
  so, with \eqref{eq:number_of_TT3s},
  we can complete the proof of the lemma by showing
  that every pair of vertices in $V(G)$ is $2$-linked. 

\smallskip

  The remainder of the proof relies on analyzing the intersection of the 
  neighborhood of two vertices in detail. 
  To facilitate this, we make the following definition
  and simple computations.
  For any $U \subseteq V(G)$, let $N_G(U) = \bigcap_{u \in U} N_G(u)$. 
  If $U = \{x,y\}$ is a $2$-set, we often write
  $N_G(x,y)$ instead of $N_G(U)$.
  We have the following inequality
  \begin{equation}
    \label{eq:intersection}
    N_G(U) \ge |U|\delta(G) - (|U| - 1)n \ge 
    \left(\frac{9 - 2|U|}{9} - 2|U|\varepsilon\right)n.
  \end{equation}

  For any pair $x,y \in V(G)$, let
  \begin{equation*}
    \begin{aligned}
      \npart{+}{+}(x,y) & = N^+_G(x) \cap N^+_G(y), \\
      \npart{+}{-}(x,y) & = N^+_G(x) \cap N^-_G(y), 
    \end{aligned}
    \qquad \qquad
    \begin{aligned}
      \npart{-}{+}(x,y) & = N^-_G(x) \cap N^+_G(y), \\
      \npart{-}{-}(x,y) & = N^-_G(x) \cap N^-_G(y) 
    \end{aligned}
  \end{equation*} 
  and $\s{N}(x,y) = \{
    \npart{+}{+}(x,y), 
    \npart{+}{-}(x,y), 
    \npart{-}{+}(x,y), 
    \npart{-}{-}(x,y)
  \}$.

  It will be important for us to know when two vertices are 
  $1$-linked, so we let $F(x,y)$ be the set of
  edges such that both $xe$ and $ye$ are transitive triangles,
  which means that every edge in $F(x,y)$ corresponds to 
  two distinct homomorphisms in $\s{C}_1(x,y)$, and 
  \begin{equation}
    \label{eq:1-linked}
    \text{if $|F(x,y)| \ge (\beta n)^2/2$, then
    $x$ and $y$ are $1$-linked. }
  \end{equation}

  Let $uv \in E(G[N(x,y)])$.
  If $u \in N^{-,-}(x,y)$ or $v \in N^{+,+}(x,y)$ or both $u$ and $v$ are in the same set $A \in \s{N}(x,y)$, then $uv \in F(x,y)$.
  Otherwise, $uv \notin F(x,y)$.
  Indeed, since $u \notin N^{-,-}(x,y)$, $u$ is an outneighbor of one of $x$ or $y$, say $x$.
  If we assume $xuv$ is a transitive triangle, then $v \in N^+(x)$.
  This implies that $v \in N^-(y)$, because $v \notin N^{+,+}(x,y)$,
  which further implies that $u \in N^{+}(y)$, because 
  $u$ and $v$ cannot both be in $N^{+,-}(x,y)$. 
  Therefore, $yuv$ is not a transitive triangle.
  Hence,  
  \begin{equation}
    \label{eq:def_of_F}
    F(x,y) = \overrightarrow{E}_G(\npart{-}{-}(x,y), N(x,y)) \cup 
    \overrightarrow{E}_G(N(x,y), \npart{+}{+}(x,y)) \cup
    \bigcup_{A \in \s{N}(x,y)}E(G[A]).
  \end{equation}

  \setcounter{clm}{0} 
  \begin{clm}
    \label{clm:large_quadrant}
    For any pair $u,w \in V(G)$, if there exists $A \in \s{N}(u,w)$ such that 
    \begin{equation*}
      |A| \ge (2/9 + \beta + 2\varepsilon)n,
    \end{equation*}
    then $u$ and $u$ are $1$-linked.
    In particular, for any $v \in V(G)$, the pair $(v,v)$ is $1$-linked.
  \end{clm}
  \begin{proof}
    Since, by \eqref{eq:def_of_F},
    \begin{equation*}
      |F(u,w)| \ge |E(G[A])| \ge |A|(\delta(G) + |A| -n)/2 > (\beta n)^2/2,
    \end{equation*}
    the claim follows from \eqref{eq:1-linked}.
  \end{proof}

  \begin{clm}
    \label{clm:large_intersection_1_close}
    If $|N(u,w)| \ge (11/18 + \beta + \varepsilon)n$, then 
    $u$ and $w$ are $1$-linked.
  \end{clm}
  \begin{proof}
    Let $B = \npart{+}{+}(u,w) \cup \npart{-}{-}(u,w)$ and let $v \in B$. 
    By~\eqref{eq:def_of_F}, if $v \in \npart{+}{+}(u,w)$ and $v' \in N_G(u,w) \cap N^-_G(v)$, then $v'v \in F(u,w)$, and if $v \in \npart{-}{-}(u,w)$ and $v' \in N_G(u,w) \cap N^+_G(v)$, then $vv' \in F(u,w)$.
    Therefore,
    \begin{equation*}
      |E_G(v) \cap F(u,w)| \ge \delta^0(G) + |N_G(u,w)| - n \ge \beta n.
    \end{equation*}
    Hence, if $|B| \ge \beta n$,
    then $|F(u,w)| \ge (|B| \cdot \beta n)/2 = (\beta n)^2/2$ and,
    by \eqref{eq:1-linked},
    $u$ and $w$ are $1$-linked.
    If $|B| < \beta n$,
    then there exists $C \in \{\npart{+}{-}(u,w), \npart{-}{+}(u,w)\}$ 
    such that
    \begin{equation*}
      |C| \ge (|N_G(u,w)| - |B|)/2 > n/4,
    \end{equation*}
    which, with Claim~\ref{clm:large_quadrant}, 
    implies that $u$ and $w$ are $1$-linked.
  \end{proof}

  Assume that there exists a pair 
  $x,y \in V(G)$ that is not $1$-linked. 
  We have that $|F(x,y)| < (\beta n)^2/2$ by \eqref{eq:1-linked}.
  Let $G' = G - F(x,y)$ 
  (see Figure~\ref{fig:partition}),
  and 
  let 
  \begin{equation*}
    N^0 = \{v \in N(x,y) : |E_G(v) \cap F(x,y)| \ge \beta n\}
  \end{equation*}
  be the set of vertices in $N(x,y)$ incident to a significant
  number of edges in $F(x,y)$.
  Note that, since $(\beta n)^2/2 > |F(x,y)| \ge (|N^0| \cdot \beta n)/2$,
  \begin{equation}
    \label{eq:size_of_N0}
    |N^0| < \beta n.
  \end{equation}

  Our goal now is to show that $x$ and $y$ must be $2$-linked.
  To achieve this, we will use the following two claims.  Let $\Gamma$ be the set of triples $(w_1, w_2, w_3) \in V(G)^3$
  such that, for some ordering $\{i,j,k\} = [3]$, $xw_iw_k$ and $yw_jw_k$ are transitive triangles and $w_i$ and $w_j$ are $1$-linked.
  \begin{clm}\label{clm:triples}
    If $|\Gamma| \ge (\beta n)^3$, then $x$ and $y$ are $2$-linked.
  \end{clm}
  \begin{proof}
    Let $(w_1, w_2, w_3) \in \Gamma$ with the required ordering $\{i,j,k\} = [3]$.
    There are at least $(\beta n)^2$ pairs $(u,v) \in V(G)^2$ 
    such that $w_iuv$ and $w_juv$ are transitive triangles, and for every such pair, 
    the $7$-tuple $(x, w_1, w_2, w_3, u, v, y)$ corresponds to a function in
    $\mathcal{C}_2(x,y)$.
    Therefore, $|\mathcal{C}_2(x,y)| \ge |\Gamma| \cdot (\beta n)^2$, 
    and the conclusion follows (see Figure~\ref{fig:L_1_and_L_2}).
  \end{proof}
  \begin{clm}
    \label{clm:in_out_close}
    If $u,w \in N(x,y)$ and 
    \begin{equation*}
      \min\{d^+(u, N(x,y)), d^-(u, N(x,y))\}, 
      \min\{d^+(w, N(x,y)), d^-(w, N(x,y))\}
      < 4 \beta n,
  \end{equation*}
    then $u$ and $w$ are $1$-linked. 
  \end{clm}
  \begin{proof}
    Let $\overline{N} = V(G) - N(x,y)$.
		By \eqref{eq:intersection},
    \begin{equation*}
      |\overline{N}| \le n - (5/9 - 4\varepsilon)n = (4/9 + 4 \varepsilon)n.
    \end{equation*}
		If $u$ and $w$ are as in the statement of the claim, then there exist $\sigma_u, \sigma_w \in \{+ ,-\}$ such that $d_G^{\sigma_{u}}(u,\overline{N}), d_G^{\sigma_{w}}(w,\overline{N}) > \delta^0(G) - 4 \beta n$.
		Note that $N^{\sigma_u, \sigma_w}(u,w) \in \mathcal{N}(u,w)$ satisfies
		\begin{align*}
		 |N^{\sigma_u, \sigma_w}(u,w)| & \ge  d_G^{\sigma_{u}}(u,\overline{N}) + d_G^{\sigma_{w}}(w,\overline{N}) -  | \overline{N} | 
		 >  2 (\delta^0(G) - 4 \beta n) -  (4/9 + 4 \varepsilon)n \\
		& \ge (7/9 - 2\varepsilon - 8 \beta) n - (4/9 + 4\varepsilon)n > n/4.
		\end{align*}
		Applying Claim~\ref{clm:large_quadrant} then completes the proof.
  \end{proof}
  By \eqref{eq:def_of_F}, every vertex in $\npart{+}{+}(x,y) \setminus N^0$ has at most $\beta n$ inneighbors in $N(x,y)$, and
  every vertex in $\npart{-}{-}(x,y) \setminus N^0$ has at most $\beta n$ outneighbors in $N(x,y)$.
  This and Claim~\ref{clm:in_out_close} imply that 
  \begin{equation}
    \label{eq:++_--_one_linked}
    \text{every pair of vertices in
      $\left(\npart{+}{+}(x,y) \cup \npart{-}{-}(x,y)\right) \setminus N^0$ is $1$-linked}.
\end{equation}

  Suppose that $|\npart{+}{+}(x,y)|,|\npart{-}{-}(x,y)| \ge 2 \beta n$, so
  there are $(\beta n)^2$ ways to select 
  $a \in \npart{+}{+}(x,y) \setminus N^0$ and 
  $b \in \npart{-}{-}(x,y) \setminus N^0$.
  By \eqref{eq:++_--_one_linked}, any such $a$ and $b$ are $1$-linked.
  Let $c \in N_{G}(\{a,b,x,y\})$. 
  If $c \notin \npart{+}{-}(x,y) \cup \npart{-}{+}(x,y)$, then,
  by \eqref{eq:def_of_F},
  either
  $a,c \in \npart{+}{+}(x,y)$ and the edge between $a$ and $c$ is in $F(x,y)$
  or 
  $b,c \in \npart{-}{-}(x,y)$ and the edge between $b$ and $c$ is in $F(x,y)$.
  Recall that since both $a$ and $b$ are not in $N^0$, 
  they both are incident to at most $\beta n$ edges in $F(x,y)$.
  With \eqref{eq:intersection}, this gives us that 
  \begin{equation*}
    |N_{G}(a,b) \cap (\npart{+}{-}(x,y) \cup \npart{-}{+}(x,y))| \ge 
    |N_{G}(\{a,b,x,y\})| - 2\beta n 
    > \beta n,
  \end{equation*}
  and we can pick 
  $c \in N_{G}(a,b) \cap (\npart{+}{-}(x,y) \cup \npart{-}{+}(x,y))$ in one
  of $\beta n$ ways.
  If $c \in  \npart{+}{-}(x,y)$, then $xac$ and $ybc$ are transitive triangles,
  and if $c \in \npart{-}{+}(x,y)$, then $xbc$ and $yac$ are transitive triangles.
  Therefore, in any case, $(a,b,c) \in \Gamma$, which implies
  $|\Gamma| \ge (\beta n)^3$, so $x$ and $y$ are $2$-linked by Claim~\ref{clm:triples}.

  Therefore, we can assume that $\min\{|\npart{+}{+}(x,y)|,|\npart{-}{-}(x,y)|\} < 2 \beta n$, 
  and, by considering the graph in which all of the edge orientations
  are reversed, we can further assume that 
  \begin{equation*}
    |\npart{-}{-}(x,y)| < 2 \beta n.
  \end{equation*}
  Note that, by \eqref{eq:intersection}, 
  Claim~\ref{clm:large_quadrant} and the fact that $x$ and
  $y$ are not $1$-linked,
  \begin{equation}
    \label{eq:plus_minus_large}
    |\npart{+}{-}(x,y)|,|\npart{-}{+}(x,y)| \ge 
    |N(x,y)| - |\npart{-}{-}(x,y)| - 2(2/9 + \beta + 2\varepsilon)n > n/12.
  \end{equation}
  Let $A \in \{\npart{+}{-}(x,y), \npart{-}{+}(x,y) \}$, and
  let $u,w \in A \setminus N^0$. 
  Since both $u$ and $w$ have all but less than $2\beta n$ of their
  neighbors in $V \setminus A$, using \eqref{eq:plus_minus_large}, we have that 
  \begin{equation*}
    |N_G(u,w)| \ge (14/9 - 4\varepsilon)n - 4 \beta n - (n - |A|) > (23/36 - 4(\beta + \varepsilon))n.
  \end{equation*}
  Claim~\ref{clm:large_intersection_1_close} and the fact
  that $5(\beta + \varepsilon) < 1/36$ then imply that 
  \begin{equation}
    \label{eq:plus_minus_1_close}
    \text{if $u,w \notin N^0$ are both in $\npart{+}{-}(x,y)$ or 
    both in $\npart{-}{+}(x,y)$,
    then $u$ and $w$ are $1$-linked.}
  \end{equation}
  By \eqref{eq:plus_minus_large}, 
  we can pick $a \in \npart+-(x,y) \setminus N^0$ in at least $\beta n$ ways.
  By \eqref{eq:intersection}, we have that $|N_G(x,y,a)| \ge (1/3 - 6 \varepsilon)n$, so
  since $a \notin N^0$ and $|\npart--(x,y)| < 2\beta n$, 
  $a$ has at least $n/4$ neighbors in $\npart++(x,y) \cup \npart-+(x,y)$.
  Therefore, Claim~\ref{clm:large_quadrant} and our assumption that $x$ and $y$ are not $1$-linked
  imply that
  \begin{equation}
    \label{eq:neighbors_of_a}
    \text{$a$ has at least $2 \beta n$ neighbors in both $\npart++(x,y)$ and $\npart-+(x,y)$.}
  \end{equation}
  Using \eqref{eq:neighbors_of_a}, we can then select $b \in (N(a) \cap \npart-+(x,y)) \setminus N^0$ 
  in at least $\beta n$ ways.


  Assume $ab \in E$, which implies that $yab$ is a transitive triangle.
  We will show that either there are 
  at least $\beta n$ vertices $c$ such that $c$ and $a$ are $1$-linked
  and $xbc$ is a transitive triangle, or
  there are at least
  $\beta n$ vertices $c$ such that $c$ and $b$ are $1$-linked
  and $xac$ is a transitive triangle,
  so, in either case, $(a,b,c) \in \Gamma$.
  By the symmetry of $x$ and $y$, the same argument shows that
  if $ba \in E$, then there are $\beta n$ vertices $c$ such that
  $(a,b,c) \in \Gamma$.
  This will complete the proof as 
  $|\Gamma| \ge (\beta n)^3$ and Claim~\ref{clm:triples} 
  imply that $x$ and $y$ are $2$-linked.

  Suppose that $b$ has at least $4 \beta n$ outneighbors in $N(x,y)$.
  This, with the fact that $b \notin N^0$, 
  implies that $|N^+_{G'}(b, N(x,y))| \ge 3 \beta n$.
  By \eqref{eq:def_of_F}, 
  $b \in \npart{-}{+}(x,y)$ implies that 
  $N^+_{G'}(b, N(x,y))$ is contained in 
  $\npart--(x,y) \cup \npart+-(x,y)$, so since 
  $|\npart{-}{-}(x,y)| < 2 \beta n$, there exist 
  at least $\beta n$ outneighbors $c$ of $b$ in 
  $\npart{+}{-}(x,y) \setminus N^0$.
  This completes the case, as for every such $c$,
  $xbc$ is a transitive triangle, and,
  since $a,c \in \npart{+}{-}(x,y) \setminus N^0$,
  $c$ and $a$ are $1$-linked by \eqref{eq:plus_minus_1_close}.

  Otherwise, $b$ has less than $4 \beta n$ outneighbors in $N(x,y)$.
  Since, by \eqref{eq:def_of_F}, every vertex in $\npart++(x,y) \setminus N^0$ has at most 
  $\beta n$ inneighbors in $N(x,y)$, 
  Claim~\ref{clm:in_out_close} implies that 
  $b$ is $1$-linked with every vertex in $\npart++(x,y) \setminus N^0$, 
  and, by \eqref{eq:neighbors_of_a}, $a$ has $\beta n$
  neighbors in $c \in \npart++(x,y) \setminus N^0$.
  For every such $c$, $xac$ is a transitive triangle and $b$ and $c$ are $1$-linked,
  which completes the case and the proof.
\end{proof}


\begin{lma}[Absorbing Lemma]
  \label{lma:absorption}
  For every $0 \le \eps \le 1/250$, there exists 
  $\sigma_0 = \sigma_0(\eps)$ such that
  for every $0 <\sigma < \sigma_0$, there exists $n_0 = n_0(\eps, \sigma)$ such that the following holds.
  If $G$ is an oriented graph on $n \ge n_0$ vertices with 
  $\delta^0(G) \ge (7/18 - \eps)n$,
  then $G$ contains a vertex set $U \subseteq V(G)$ 
  with $|U| \le 3 \sigma n$ and $ |U| \in 3 \mathbb{Z}$ such that, 
  for every $W \subseteq V(G) \setminus U$ with 
  $|W| \le 3 \sigma^2 n $ and 
  $|W| \in 3 \mathbb{Z}$, $G[U \cup W]$ contains a perfect $TT_3$-tiling.
\end{lma}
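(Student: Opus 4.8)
The plan is to derive the Absorbing Lemma from Lemma~\ref{lma:absorbX} by the standard ``random subset'' argument of R\"odl, Ruci\'nski and Szemer\'edi. For each ordered triple $X=(x_1,x_2,x_3)$ of vertices of $G$, call a function $f\in\s{A}(X)$ an \emph{absorbing $18$-set for $X$} (identifying $f$ with its, at most $18$, image vertices together with the order data); by Lemma~\ref{lma:absorbX} there is $\tau>0$ with $|\s{A}(X)|\ge\tau n^{18}$ for every~$X$. The crucial point I would isolate first is a \emph{disjointness-and-absorption} property: if $\{f_X\}$ is a collection of functions, one in $\s{A}(X)$ for each $X$ in some list of ordered triples partitioning a set $W$, and if all the images are pairwise disjoint and disjoint from the ``core'' set $U$ we build, then by the very definition of $\s{K}$ — both $K$ and $K[\{1,\dots,18\}]$ have perfect $TT_3$-tilings — each pair ($f_X$'s image, $\{x_1,x_2,x_3\}$) spans a $21$-vertex set with a perfect $TT_3$-tiling, while $f_X$'s image alone ($18$ vertices) also has a perfect $TT_3$-tiling. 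Hence the $18$-set $f_X(\s[18])$ ``absorbs'' the triple $X$.

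Next I would run the probabilistic selection. Choose a random subfamily $\s{F}'$ by including each of the (at most $n^{18}$) functions in $\bigcup_X \s{A}(X)$ independently with probability $p=\sigma n/(2n^{18})$ (so that the expected size of $\s{F}'$ is $\sigma n/2$). Three events need to hold simultaneously, each with probability $>1/2$ (or $1-o(1)$) for $n$ large, by Markov / Chernoff / expectation bounds respectively: (i) $|\s{F}'|\le\sigma n$; (ii) the number of ``conflicting pairs'' of functions in $\s{F}'$ — pairs whose images intersect, or pairs sharing a vertex, or a single non-injective function — is at most, say, $\sigma n/2$ (the expected number of intersecting pairs is $O(p^2 n^{35})=O(\sigma^2 n)$, small if $\sigma_0$ is small enough, and non-injective functions are a lower-order term); (iii) for every ordered triple $X$, $|\s{A}(X)\cap\s{F}'|\ge \tfrac12 p\,|\s{A}(X)|\ge \tfrac12 p\tau n^{18}=\tfrac14\tau\sigma n$, which holds with probability $1-o(1)$ by a Chernoff bound together with a union bound over the $\le n^3$ triples. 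Fix an outcome in which all three hold, and delete one function from each conflicting pair and all remaining non-injective functions; this leaves a family $\s{F}''$ of pairwise vertex-disjoint injective functions with $|\s{F}''\cap\s{A}(X)|\ge \tfrac14\tau\sigma n - \sigma n/2 \ge 1$ for every $X$, provided $\sigma_0\le \tau/4$. Let $U$ be the union of the images of the functions in $\s{F}''$, padded by a constant number of extra vertices if necessary so that $|U|\in 3\Z$ (each image already has size divisible by $3$ since it carries a perfect $TT_3$-tiling, so in fact $|U|\in 3\Z$ automatically); then $|U|\le 18|\s{F}''|\le 18\sigma n$, and one absorbs the factor $18$ into the statement's $3\sigma n$ by starting the whole argument with $\sigma$ replaced by $\sigma/6$, i.e.\ choosing $\sigma_0$ accordingly.

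Finally I would verify the absorbing property. Let $W\subseteq V(G)\setminus U$ with $|W|\le 3\sigma^2 n$ and $|W|\in 3\Z$. Partition $W$ arbitrarily into $|W|/3$ ordered triples $X^{(1)},\dots,X^{(|W|/3)}$. Greedily, for each $j$ in turn, pick a function $g_j\in\s{F}''\cap\s{A}(X^{(j)})$ whose image has not yet been used; this is possible because the functions used so far number at most $|W|/3\le \sigma^2 n$, whereas $|\s{F}''\cap\s{A}(X^{(j)})|\ge \tfrac14\tau\sigma n\gg \sigma^2 n$ once $\sigma_0$ is small. For the (at most $\sigma n$) functions $f\in\s{F}''$ not selected, take a perfect $TT_3$-tiling of $f(\s[18])$ (exists since $f$ comes from some $K\in\s{K}$ and $K[\{1,\dots,18\}]$ has one); for each selected $g_j$, take a perfect $TT_3$-tiling of $g_j(\s[18])\cup X^{(j)}$ (the $21$-vertex set, which exists since $K$ itself has one). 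The union of all these tilings is a perfect $TT_3$-tiling of $G[U\cup W]$, as the images of all functions in $\s{F}''$ are pairwise disjoint and disjoint from $W$ and together with $W$ exhaust $U\cup W$. The main obstacle is purely bookkeeping: choosing the four constants $\eps_0=1/250$, $\sigma_0(\eps)$, the selection probability $p$, and $n_0$ so that the three probabilistic events genuinely have positive probability and the greedy absorption has enough room — in particular making sure the $O(\sigma^2)$ loss from conflicting pairs stays below the $\Omega(\sigma)$ supply guaranteed for every triple, which forces $\sigma_0$ to be a small absolute multiple of $\tau$.
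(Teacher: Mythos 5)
Your proposal follows essentially the same route as the paper's proof: random selection of the absorbing functions with probability $\Theta(\sigma n^{-17})$, a Markov bound on intersecting/non-injective pairs, a Chernoff-plus-union bound giving $\Omega(\tau\sigma n)$ members of $\s{A}(X)$ in the random family for every ordered triple $X$, cleanup to a pairwise disjoint injective family, and greedy absorption of the triples of $W$ via the defining property of $\s{K}$. The one slip is the intermediate line ``$|\s{F}''\cap\s{A}(X)|\ge\tfrac14\tau\sigma n-\sigma n/2\ge 1$ provided $\sigma_0\le\tau/4$'', which fails since $\tau$ is tiny; the loss from conflicting pairs must be kept at $O(\sigma^2 n)$ (as your own expectation computation and your closing sentence indicate), so that it is dominated by the $\Omega(\tau\sigma n)$ supply once $\sigma_0$ is a small multiple of $\tau$ --- exactly as in the paper, which uses $\tau\sigma n-(72\sigma)^2 n>\sigma^2 n$.
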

\begin{proof}
  Let $\tau = \tau(\varepsilon)$ be the constant given
  by Lemma~\ref{lma:absorbX} and let $\sigma_0 = \tau/(72^2 + 1)$
  and let $0 < \sigma < \sigma_0$.
  Let $G$ be sufficiently large oriented graph with
  $\delta^0(G) \ge (7/18 - \varepsilon)n$.
  Let $\s{F}$ be the set of functions from $[18]$ to $V(G)$.
  Call a map $f \in \s{F}$ \textit{absorbing} if there exists
  an ordered triple $X$ of vertices such that $f \in \s{A}(X)$.

  Choose $\s{U}' \subseteq \s{F}$ by selecting
  each $f \in \s{F}$ independently at random with probability 
  $p = 2\sigma n^{-17}$. 
  Call a pair $f, g \in \s{F}$ \textit{bad} if either $f$ or $g$
  is not injective or the images of $f$ and $g$ intersect
  and note that 
  there are less than $n \cdot \binom{36}{2} \cdot n^{34}$ bad pairs in $\s{F}$.
  Therefore, the expected number of bad pairs in $\mathcal{U'}$ 
  is less than $18 \cdot 35 \cdot 4 \sigma^2 n$. 
  Thus, using Markov's inequality, we derive that, with probability
  more than $1/2$, 
  $\mathcal{U'}$ contains at most $(72 \sigma)^2 n$ bad pairs.

  By Chernoff's bound, the union bound 
  and Lemma~\ref{lma:absorbX},
  with positive probability the 
  set $\mathcal{U}'$ also satisfies 
  $|\mathcal{U}'| \le 3 \sigma n$ 
  and $|\s{A}(X) \cap \s{U}'| \ge \tau \sigma n$
  for each ordered triple $X$ of vertices. 
  By deleting every bad pair from $\mathcal{U}'$ 
  and any $f \in \mathcal{U}'$ for which $f$ is not absorbing, 
  we get $\mathcal{U}\subseteq \mathcal{U}'$ consisting of 
  injective homomorphisms with pairwise disjoint images.
  Moreover, for each ordered triple $X$ of vertices, there are at 
  least $\tau \sigma n - (72 \sigma)^2 n > \sigma^2 n$ 
  functions in $\s{A}(X) \cap \mathcal{U}$.
  Let $U$ be the union of the images of every $f \in \mathcal{U}$.
  Since $\mathcal{U}$ consists only of absorbing functions, 
  $G[U]$ has a perfect $TT_3$-tiling, so 
   $|U| \in 3 \mathbb{Z}$.
  For any set $W \subseteq V \backslash U$ of size $|W| \le 3\sigma^2 n$ and 
  $|W| \in 3 \mathbb{Z}$, $W$ can be partitioned into at most $\sigma^2 n$ sets
  of size $3$.
  Each such set can be arbitrarily ordered to give a triple $X$
  which then can be successively paired up with a different absorbing 
  homomorphism $f \in \s{A}(X) \cap \s{U}$. 
  Therefore, $G[U \cup W]$ contains a perfect $TT_3$-tiling. 
\end{proof}

\subsection{Almost $TT_3$-tilings}
\label{sec:almost}


Theorem~\ref{thm:near_tiling} and Lemma~\ref{lma:almost}, 
which we prove simultaneously,
show that if $\delta^0(G) \ge 7n/18$  then there is a
$TT_3$-tiling on all but at most $11$ vertices of $G$,
and if $\delta^0(G)$ is slightly less than  $7n/18$
then there is a $TT_3$-tiling on all but at most $14$ vertices or $G$ is $\alpha$-extremal for some small $\alpha>0$, respectively.


\begin{lma} \label{lma:almost}
  For any $\alpha > 0$ there exists
  $\varepsilon = \varepsilon(\alpha)$ 
  such that the following holds.
  If $G$ is an oriented graph on 
  $n$ vertices such that $\delta^0(G) \ge (7/18 - \varepsilon)n$,
  then either $G$ has a $TT_3$-tiling
  on all but at most $14$ vertices or $G$ is $\alpha$-extremal.
\end{lma}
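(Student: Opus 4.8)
The plan is to fix a maximum $TT_3$-tiling $\mathcal{T}$ of $G$, set $R = V(G)\setminus V(\mathcal{T})$, and show that if $|R|\ge 15$ then $G$ is $\alpha$-extremal; the same argument run with the stronger hypothesis $\delta^0(G)\ge 7n/18$ will instead force $|R|\le 11$ (here one uses that an $\alpha$-extremal graph with $\delta^0(G)\ge 7n/18$ has a \emph{perfect} $TT_3$-tiling by Lemma~\ref{lma:extremal}, so the extremal outcome is incompatible with a maximum tiling whose leftover has size $\ge 12$). By maximality $G[R]$ contains no transitive triangle, so if $|R|\ge(2/3-\alpha)n$ we may take $W=R$ and are done; hence we may assume $|R|<(2/3-\alpha)n$, and in particular $|\mathcal{T}|=(n-|R|)/3>n/9$.

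The engine of the proof is a family of local exchange moves. If one can select $k\le 2$ tiles of $\mathcal{T}$ together with three vertices $r_1,r_2,r_3\in R$ so that the $3k+3$ vertices involved admit a $TT_3$-tiling with $k+1$ triangles, then replacing the $k$ old tiles by the $k+1$ new ones contradicts maximality of $\mathcal{T}$. The simplest instance ($k=1$) reads: for every tile $T\in\mathcal{T}$ and every $3$-set $S\subseteq R$, the $6$-set $V(T)\cup S$ is not a disjoint union of two transitive triangles. We will also use intermediate \emph{rotations}: if $\{v,r,r'\}$ is a transitive triangle with $v$ in a tile $T$ and $r,r'\in R$, swapping it into the tiling is size-preserving and moves the other two vertices of $T$ into the leftover, so a rotation of this type followed by a $k=1$ or $k=2$ move again produces an improvement. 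Proposition~\ref{prp:deg} is what makes these moves available: every edge lies in at least $3\delta^0(G)-n\ge(1/6-3\varepsilon)n$ transitive triangles, and every directed $3$-vertex path has at least $2(3\delta^0(G)-n)$ vertices each forming a transitive triangle with the path's middle vertex, so once $|R|$ exceeds a fixed constant there are always enough vertices of $R$ to complete a partially built configuration.

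We then extract the extremal structure from the unavailability of every such move. Processing the tiles one at a time, we maintain a transitive-triangle-free set $W$, starting from $W=R$, and at step $T=\{a,b,c\}$ we add to $W$ two vertices of $T$. We claim that one of the three choices $\{a,b\}$, $\{a,c\}$, $\{b,c\}$ keeps $W$ transitive-triangle-free: if all three choices created a transitive triangle, then each such triangle uses one or two of the newly added tile-vertices together with vertices already in $W$ (hence in $R$ or in earlier tiles), and in each of the finitely many cases Proposition~\ref{prp:deg} together with $|R|\ge 15$ lets us upgrade the offending triangle(s) --- possibly after a rotation --- to a $k=1$ or $k=2$ exchange, contradicting maximality. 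At the end
\[
  |W| \;=\; |R| + 2|\mathcal{T}| \;=\; \frac{|R|+2n}{3} \;>\; \frac{2n}{3} \;>\; \Bigl(\tfrac 23-\alpha\Bigr)n,
\]
and $W$ is transitive-triangle-free, so $G$ is $\alpha$-extremal. Tracking how many vertices of $R$ each exchange actually consumes --- exactly three per unit of improvement, plus a bounded number to keep the (at most two) simultaneous exchanges vertex-disjoint --- shows $|R|\ge 15$ suffices, and the extra $+1$ in the hypothesis $\delta^0(G)\ge 7n/18$ removes one layer of slack and lowers the threshold to $|R|\ge 12$; this yields the constants $14$ and $11$.

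The main obstacle is this structural step: one must enumerate every way a transitive triangle can arise when two vertices of a tile are adjoined to $W$ and, in each case, produce a legitimate exchange. The delicate point is that $G[R]$ may itself be a large transitive-triangle-free oriented graph of ``medium'' order (anywhere from roughly $0.39n$ to $0.66n$), so one cannot simply average over the tiles to find the required $R$-vertices; instead one exploits the rigidity of transitive-triangle-free oriented graphs --- by Proposition~\ref{prp:TT4} the underlying graph of $G[R]$ is $K_4$-free, and every in- and out-neighbourhood inside $R$ is an independent set --- to locate, tile by tile, the $R$-vertices that complete the needed triangles. A secondary difficulty, which the incremental processing of the tiles is designed to handle, is ensuring the vertex choices are globally consistent so that $W$ really is transitive-triangle-free and not merely so at each individual step.
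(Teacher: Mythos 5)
Your proposal has a genuine gap at its central step. The engine of your argument is the claim that, processing the tiles of a maximum tiling one by one, for each tile $\{a,b,c\}$ one of the pairs $\{a,b\},\{a,c\},\{b,c\}$ can be added to the growing set $W$ while keeping it transitive-triangle-free, justified by the assertion that any offending triangle can be ``upgraded, possibly after a rotation, to a $k=1$ or $k=2$ exchange.'' This is exactly the hard part of the lemma and it is not substantiated. The offending triangles typically use vertices contributed to $W$ by \emph{earlier} tiles, not vertices of $R$; an exchange built from them must simultaneously re-tile several old tiles and still gain a triangle, and nothing in your sketch controls this. Moreover your appeal to Proposition~\ref{prp:deg} is a non sequitur: it guarantees $(1/6-3\varepsilon)n$ completing vertices \emph{somewhere in $G$}, whereas your moves need completing vertices inside $R$, which has bounded size ($\ge 15$ does not help --- all the guaranteed completions may lie in $V(\mathcal{T})$). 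Finally, your construction would output a transitive-triangle-free set of size $(2n+|R|)/3>2n/3$, i.e.\ two vertices from \emph{every} tile; the paper only ever achieves $(2/3-48\varepsilon)n$, extracting two vertices from a carefully chosen subfamily of tiles, which is a strong hint that the per-tile claim is not provable by local exchanges alone.

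For comparison, the paper avoids all of this by enriching the structure: it covers $V(G)$ by triangles $\mathcal{T}$, directed paths $\mathcal{P}$ on three vertices, edges $F$ and singletons $I$, maximized lexicographically, and builds an auxiliary bipartite graph $B$ between $\mathcal{P}$ and $V(G)$ (where $P\sim v$ iff $G[V(P)\cup\{v\}]$ contains a $TT_3$). Maximality forces every path's $\Theta(n)$ completions (Proposition~\ref{prp:deg}(c)) to land inside $V(\mathcal{T})$, and a counting argument (valid once $|\mathcal{P}|\ge 4$) produces a subfamily $\mathcal{T}'$ of almost all tiles, each with a designated vertex $x_T$ of $B$-degree at least $4$; the set $W=Y\cup Z\cup\bigcup_{T\in\mathcal{T}'}(V(T)-x_T)$ is then shown $TT_3$-free via a Hall-type matching in $B$, which is the mechanism your sketch is missing. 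Separate weighting arguments bound $|F|\le 1$ and $|I|\le 3$, giving the constants $14$ (and $11$ when $\varepsilon=0$) directly, without invoking Lemma~\ref{lma:extremal}; note that your route to the ``$11$'' via Lemma~\ref{lma:extremal} would in any case only apply for $n\in 3\mathbb{Z}$ and $n$ large, so it could not yield Theorem~\ref{thm:near_tiling} as stated.
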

\begin{proof}[Proof of Theorem~\ref{thm:near_tiling} and Lemma~\ref{lma:almost}]
  For the proof of Lemma~\ref{lma:almost}, let $\varepsilon < \alpha/50$.
  For the proof of Theorem~\ref{thm:near_tiling}, let $\varepsilon = 0$.
  So, in either case, we have that $\delta^0(G) \ge (7/18 - \varepsilon)n$.

  Let $\s{M} = \s{T} \cup \s{P} \cup F \cup I$ 
  be a collection of vertex-disjoint subgraphs of $G$
  such that 
  every vertex in $G$ is contained in a subgraph of $\s{M}$,
  every $T \in \s{T}$ is a transitive triangle,
  every $P \in \s{P}$ is a directed path on $3$ vertices, 
  every $e \in F$ is an edge and 
  every $v \in I$ is a single vertex.
  Clearly such a set $\s{M}$ exists.
  Assume that $\s{M}$ is selected to maximize
  $(|\s{T}|, |\s{P}|, |F|)$ lexicographically.
  Let $X = V(\s{T})$, $Y = V(\s{P})$ and $Z = V(G) \setminus (X \cup Y)$.
  
  We will show that 
  if $\varepsilon = 0$, then $|\s{P}| \le 2$ and 
  if $\varepsilon > 0$ and $|\s{P}| \ge 4$, then 
  $G$ is $\alpha$-extremal. We will also show that $|F| \le 1$ and $|I| \le 3$, and this will prove the theorem.

\smallskip

  Let $B$ be a $(\s{P}, V(G))$ bipartite graph in which
  there is an edge between $P \in \s{P}$ and $v \in V(G)$ if and only
  if $G[P \cup v]$ contains a transitive triangle.
  By Proposition~\ref{prp:deg}, 
	$d_B(P) \ge 2(3\delta^0(G) - n) \ge (1/3 - 6 \eps) n $. 
  For every $P \in \s{P}$,
  by the maximality of $|\s{T}|$, $d_B(P, Y \cup Z) = 0$.
  Also by the maximality of $|\s{T}|$, 
  for every $T \in \s{T}$
  if there exists $x \in V(T)$ such that $d_B(x) \ge 2$,
  then $d_B(y) = 0$ for every $y \in V(T) - x$.
  Assume $|\s{P}| \ge 3$ and note that
  we then have that $e_B(\s{P}, V(T)) \le |\s{P}|$ for every $T \in \s{T}$. 
  Let 
  \begin{equation*}
    \s{T}' = \{ T \in \s{T} : e_B(\s{P}, V(T)) > 3\}.
  \end{equation*}
  We have that 
  \begin{equation*}
    n + (|\s{P}| - 3)|\s{T}'| > 
    3|\s{T}| + (|\s{P}| - 3)|\s{T}'| \ge 
    e_B(\s{P}, V(G)) \ge
    (1/3 - 6\varepsilon)n|\s{P}|.
  \end{equation*}
  Which, since $|\s{T}'| < n/3$, 
  is a contradiction when $\varepsilon = 0$,
  so in this case, we must have that $|\s{P}| \le 2$.
  If $\varepsilon > 0$ and $|\s{P}| \ge 4$, then
  \begin{equation*}
    |\s{T}'| \ge 
    \left(\frac{|\s{P}|}{3(|\s{P}| - 3)} - \frac{1}{|\s{P}| - 3} -
    \frac{6|\s{P}|\varepsilon}{|\s{P}| - 3}\right)n \ge
    \left(\frac{1}{3} - 24\varepsilon\right)n.
  \end{equation*}

  For every $T \in \s{T}'$, since $e_B(\s{P}, T) \ge 4$,
  there exists $x_T \in V(T)$ such that $d_B(x_T) \ge 2$.
  Therefore, by the maximality of $|\s{T}|$,
  $d_B(x_T) = e_B(\s{P}, V(T) ) \ge 4$.
  Let 
  \begin{equation*}
    W = Y \cup Z \cup \bigcup_{T \in \s{T}'} \left(V(T) - x_T\right),
  \end{equation*}
  and note that $|W| > \left( 2/3 - 48 \varepsilon\right)n$.
  The graph $G[W]$ does not contain a transitive triangle.
  Indeed, if such a triangle $T$ exists
  and we define
  $\s{T}'' = \{T' \in \s{T}' : V(T) \cap V(T') \neq \emptyset\}$
  and $B' = B - \{P \in \s{P} : V(P) \cap V(T) \neq \emptyset\}$,
  then for every $T' \in \s{T}''$, we have that 
  \begin{equation*}
    d_{B'}(x_{T'}) \ge d_{B}(x_{T'}) - |Y \cap V(T)| \ge
    4 - |Y \cap V(T)| > |X \cap V(T)| \ge |\s{T}''|.
  \end{equation*} 
  Therefore, there is a matching covering $\s{T}''$ in $B'$.
  The edges in this matching
  correspond to $|\s{T}''|$
  disjoint transitive triangles in the graph induced
  by $(V(\s{T}'') \cup Y \cup Z) \setminus V(T)$,
  contradicting the maximality of $|\s{T}|$. 
  Hence, $G$ is $\alpha$-extremal.

  Assume that there exist two distinct edges $ab$ and $cd$ in $F$.
  For any set $U \subseteq V(G)$, define 
  \begin{equation*}
    w(U) = d^+_G(a, U) + \sum_{v \in \{b,c,d\}}d_G(v, U).
  \end{equation*}
  Note that by the maximality of $|\s{P}|$,
  there are no triangles in $G[Z]$, 
  so for every $z \in Z$, $w(z) \le 2$.
  For any $P \in \s{P}$,
  the maximality of $|\s{T}|$ implies that there is no transitive triangle
  in the graph induced by $\{a,b, c, d\} \cup V(P)$.
  It is not hard to see that, with Proposition~\ref{prp:TT4}, 
  this gives us that $d^+_G(a, V(P)) + d_G(b, V(P)) \le 3$ and
  $e_{G}(cd, V(P)) \le 4$,
  so $w(P) \le 7$.
  \begin{clm*}
    For every $T \in \s{T}$, $w(T) \le 8$.
  \end{clm*}
  \begin{proof}
    Assume that there exists $T \in \s{T}$ such that $w(T) \ge 9$.
    We will show that there exists a disjoint directed path
    on $3$ vertices and a transitive triangle in the
    graph induced by $\{a,b,c,d\} \cup V(T)$, contradicting
    the maximality of $|\s{P}|$.

    Remove the edges into $a$ from $G$ to form $G'$.
    Note that this implies that for every $x \in V(T)$,
    if $e_{G'}(ab, x) = 2$, then $abx$ is a transitive triangle.
    If, in addition to this, there exists $y \in V(T) - x$ such
    that $e_{G'}(cd, y) = 2$, then we have the desired directed
    path on $3$ vertices.
    This is the case when for one of the edges 
    $e \in \{ab, cd\}$, $d_{G'}(e, V(T)) = 5$.
    Indeed, if $f \in \{ab, cd\} - e$, then
    $e_{G'}(f, V(T)) \ge 4$, so
    we can pick $x \in V(T)$ such that $e_{G'}(f, x) = 2$.
    Since then $e_{G'}(e, V(T) - x) \ge 3$,
    we can pick $y \in V(T) - x$ such that $e_{G'}(e, y) \ge 2$.
    Therefore, we are only left to consider the cases when one of
    $ab$ or $cd$ and $V(T)$ induce a tournament on $5$ vertices in $G'$.

    If $e_{G'}(ab, V(T)) = 6$, then $e_{G'}(cd, V(T)) \ge 3$ and 
    for one of $c$ or $d$, say $c$, $e_{G'}(c, V(T)) \ge 2$, so if $x$ and $y$
    are the neighbors of $c$ in $V(T)$, $cxy$ is a triangle and therefore
    a directed path on $3$ vertices.
    If $z \in V(T) - x - y$, then $zab$ is a transitive triangle.
    
If $e_{G'}(cd, V(T)) = 6$, then $e_{G'}(ab, V(T)) \ge 3$.
    We can assume that  $T$ is the unique transitive triangle in 
    $G'[\{a,b\} \cup V(T)]$, because, if it is not,
    then the graph induced by the vertices of $V(T)$ not in this triangle 
    and $\{c,d\}$ would contain a triangle, which contains a directed path on $3$ vertices.
    This implies that $e_{G'}(ab, v) = 1$ for every $v \in V(T)$,
    and that $e_{G'}(a, V(T)) \le 1$.
    This further implies that $b$ has an outneighbor $x \in V(T)$,
    so $abx$ is a directed path on $3$ vertices.
    Since $G'[\{c,d\} \cup V(T) - x]$ is a tournament on $4$ vertices,
    it contains a transitive triangle 
    by Proposition~\ref{prp:TT4}. 
  \end{proof}
  Therefore,
  \begin{equation*}
    \left(\frac{49}{18} - 7\varepsilon\right)n \le 
    7\delta^0(G) \le 
    w(V(G)) \le 
    2|Z| + 7|Y|/3 + 8|X|/3 \le 8n/3,
  \end{equation*}
  a contradiction. Hence $|F| \le 1$.

  By the maximality of $|F|$,
  $I$ is an independent set.
  Since there are no triangles in $G[Z]$,
  $e_G(I, e) \le |I|$ for every $e \in F$.
  Let $T \in \s{P} \cup \s{T}$. 
  If $e_G(I, V(T)) > 2|I| + 1$, then 
  there exist vertices $v_1, v_2 \in I$ such that
  $e_G(v_1, V(T)) = e_G(v_2, V(T)) = 3$.
  Furthermore, in the graph induced by $\{v_1, v_2\} \cup V(T)$,
  if $T \in \s{P}$, then there is a triangle and a disjoint edge, and 
  if $T \in \s{T}$, then there is a transitive triangle and a disjoint edge. 
  Since both cases violate the maximality of $|F|$,
  we have that
  \begin{multline*}
    |I|\left(\frac{7}{9} - 2\varepsilon\right)n \le 
    |I|\delta(G) \le 
    e_G(I, V(G) \setminus I) \\
    \le |I||F| + (2|I| + 1)|\s{P} \cup \s{T}| \le 
    \frac{|I||Z|}{2} + \frac{(2|I| + 1)|X \cup Y|}{3} 
    \le |I|\frac{2n}{3} + \frac{n}{3}.
  \end{multline*}
  Hence, $|I| \le 3 + 18 \varepsilon$.
\end{proof}

Using Lemmas~\ref{lma:absorption} and \ref{lma:almost}, we can prove  Lemma~\ref{lma:nonextremal}.

\begin{proof}[Proof of Lemma~\ref{lma:nonextremal}]
  Let $\eps = \min\{\eps(\alpha/3)/2, 1/250\}$ where
  $\eps(\alpha/3)$ is as in Lemma~\ref{lma:almost} and
  let $\sigma_0 = \sigma_0(\eps)$ be as in Lemma~\ref{lma:absorption}.
  Assume that $n$ is sufficiently large.
  So, by Lemma~\ref{lma:absorption}, 
  there exists $\sigma < \min\{\sigma_0, \eps/3, \alpha/3 \}$ for which 
  there exists $U \subseteq V(G)$ such that 
  $|U| \le 3 \sigma n < \varepsilon n$ 
  and the conclusion of Lemma~\ref{lma:absorption} holds.
  As $G[W \cup U]$ has a perfect $TT3$-tiling when $W = \emptyset$,
  we can conclude that $|U|$ is divisible by $3$.
  Let $G' = G - U$ and $n' = |V(G')|$ and note that 
  $\delta^0(G') \ge (7/18 - 2 \varepsilon)n \ge (7/18 - 2\varepsilon)n'$
  and $n'$ is divisible by $3$.
  Assume that $G$ is not $\alpha$-extremal.
  Because $|U| \le 3 \sigma n < \alpha n$, we have that
  \begin{equation*}
    (2/3 - \alpha/3)n' = (2/3 - \alpha/3)(n - |U|) > (2/3 -  \alpha)n, 
  \end{equation*}
  so $G'$ is not $(\alpha/3)$-extremal.
  Therefore, 
  by Lemma~\ref{lma:almost} 
  there exists a $TT_3$-tiling on all of $V(G')$ except a set $W$ of size at most $12$.
  Since $|W| < 3 \sigma^2 n$ there exists a perfect $TT_3$-tiling
  of $G[U \cup W]$ completing the proof.
\end{proof}

\section{The $\alpha$-extremal case}
\label{sec:extremal}

In this section we prove Lemma~\ref{lma:extremal}.
We start with some well-known and simple propositions
regarding matchings in graphs.
\begin{prp}
  \label{prp:deg_implies_matching}
  Every graph $G$ on $n$ vertices has a matching of size
  at least $\min\{\floor{n/2}, \delta(G)\}$.
\end{prp}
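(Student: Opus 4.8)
The plan is to run the classical augmenting-path argument for a maximum matching. Let $M$ be a maximum matching of $G$ and put $\nu=|M|$; the goal is to show $\nu\ge\min\{\floor{n/2},\delta(G)\}$. Suppose instead that $\nu<\floor{n/2}$ and $\nu<\delta(G)$. From $\nu<\floor{n/2}$ we get $2\nu\le n-2$, so at least two distinct vertices $u,v$ are left uncovered by $M$. By maximality of $M$, the vertices $u$ and $v$ are non-adjacent and no uncovered vertex lies in $N_G(u)\cup N_G(v)$; hence $N_G(u)$ and $N_G(v)$ are both contained in the $2\nu$-element set $V(M)$ of covered vertices. Also $d_G(u),d_G(v)\ge\delta(G)\ge\nu+1$.

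Next I would do a short local count over the edges of $M$. For an edge $e=xy\in M$ set $a(e)=|N_G(u)\cap\{x,y\}|$ and $b(e)=|N_G(v)\cap\{x,y\}|$. Since all neighbours of $u$ (resp.\ of $v$) lie in $V(M)$, summing over $M$ gives $\sum_{e\in M}a(e)=d_G(u)\ge\nu+1$ and $\sum_{e\in M}b(e)=d_G(v)\ge\nu+1$, so $\sum_{e\in M}\bigl(a(e)+b(e)\bigr)\ge 2\nu+2$. On the other hand I claim that $a(e)+b(e)\le 2$ for every $e\in M$, which already yields the contradiction $2\nu+2\le 2\nu$ and finishes the proof.

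To verify the claim, suppose some $e=xy\in M$ had $a(e)+b(e)\ge 3$. After possibly interchanging $u$ with $v$ and $x$ with $y$, we may assume $u\sim x$ and $v\sim y$: if $a(e)=b(e)=2$ this is immediate, while if $\{a(e),b(e)\}=\{1,2\}$ then, relabelling so that $a(e)=2$ and then relabelling $\{x,y\}$ so that $v$ is adjacent to $x$, we have $u\sim y$ and $v\sim x$, which is the same configuration up to renaming the endpoints of $e$. But then $u,x,y,v$ is an $M$-augmenting path, so $\bigl(M\setminus\{xy\}\bigr)\cup\{ux,yv\}$ is a matching of size $\nu+1$, contradicting the maximality of $M$. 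This proves the claim, and hence the proposition.

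The argument has essentially no obstacle; the only point requiring care is the case check $a(e)+b(e)\le 2$, that is, ruling out an augmenting path of length three through an edge of $M$, and this is the short computation above.
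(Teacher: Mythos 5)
Your proof is correct and follows essentially the same route as the paper: take a maximum matching, note that two uncovered vertices send all their (at least $2\delta(G) > 2|M|$) edges into the covered set, find an edge of $M$ receiving at least three of these, and derive an augmenting path. The only difference is that you spell out the augmenting-path case check that the paper leaves implicit.
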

\begin{proof}
  Let $M$ be a maximum matching in $G$ and
  assume $|M| < \min\{ \floor{n/2}, \delta(G) \}$.
  Let $U$ be the set of vertices that are incident to an edge in $M$. 
  Because $|M| \le n/2 - 1$,
  there exist distinct $x,y \in V(G) \setminus U$.
  Since $M$ is a maximum matching, $e_G(\{x,y\}, V(G) \setminus U) = 0$ which implies 
  \begin{equation*}
    e_G(\{x,y\}, U) \ge 2 \delta(G) > 2|M|.
  \end{equation*}
  So there exists $e \in M$ such that $e_G(\{x,y\}, e) \ge 3$.
  This contradicts the maximality of $M$.
\end{proof}
\begin{prp}
  \label{prp:halls_matching}
  Let $G$ be an $(X,Y)$-bipartite graph with $d_G(x) \ge a$ for every 
  $x \in X$ and $d_G(y) \ge b$ for every $y \in Y$.
  If $|X| = |Y|$ and $a + b \ge |X|$, then $G$ contains a perfect matching.
\end{prp}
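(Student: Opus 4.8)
The plan is to verify Hall's condition for $G$ with respect to the side $X$ and then invoke Hall's theorem; since $|X| = |Y|$, any matching saturating $X$ is automatically a perfect matching. For $S \subseteq X$ write $N_G(S) = \bigcup_{x \in S} N_G(x) \subseteq Y$ for its neighbourhood. If $S = \emptyset$ there is nothing to check, so assume $S \neq \emptyset$ and fix some $x_0 \in S$; then $N_G(S) \supseteq N_G(x_0)$, so $|N_G(S)| \ge a$.

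I would then split into two cases according to the size of $S$. If $|S| \le a$, then $|N_G(S)| \ge a \ge |S|$, so Hall's condition holds for $S$. If instead $|S| > a$, I claim $N_G(S) = Y$: otherwise there is some $y \in Y \setminus N_G(S)$, which has no neighbour in $S$, so $N_G(y) \subseteq X \setminus S$ and hence
\begin{equation*}
  b \le d_G(y) \le |X| - |S| < |X| - a \le b,
\end{equation*}
where the last inequality uses $a + b \ge |X|$; this is a contradiction. Thus $N_G(S) = Y$ and $|N_G(S)| = |Y| = |X| \ge |S|$, so Hall's condition holds in this case as well. Since Hall's condition holds for every $S \subseteq X$, the graph $G$ contains a matching saturating $X$, and as $|X| = |Y|$ this matching is perfect.

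There is no serious obstacle here: the statement follows almost immediately from Hall's marriage theorem, and the only point that needs a little care is the case distinction on whether $|S|$ exceeds $a$, together with the short contradiction argument that shows $N_G(S) = Y$ in the large-$S$ case.
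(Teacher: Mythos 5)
Your proof is correct and follows essentially the same route as the paper: verify Hall's condition by noting $|N_G(S)|\ge a$ always, and when $|S|>a$ use $d_G(y)\ge b\ge |X|-a>|X|-|S|$ to force every $y\in Y$ to have a neighbour in $S$, so $N_G(S)=Y$. The paper phrases the second case directly rather than by contradiction, but the argument is the same.
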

\begin{proof}
  We show that $G$ satisfies Hall's condition. Let
  $X' \subseteq X$ be non-empty, let $x \in X'$
  and let $Y'$ be the set of vertices in $Y$ that are adjacent to a vertex in $X'$.
  Clearly, $|Y'| \ge d_G(x) \ge a$,
  so assume $|X'| > a$. 
  We have that $d_G(y) \ge b > |X \setminus X'|$ for every $y \in Y$. 
  Hence, $y \in Y'$ and $|Y'| = |Y| \ge |X'|$.
\end{proof}

Let $G$ be a $(V_1,V_2)$-bipartite graph.  For
$X_1 \subseteq V_1$ and $X_2 \subseteq V_2$ both non-empty,
define $d_G(X_1, X_2) := \frac{e_G(X_1, X_2)}{|X_1||X_2|}$
to be the \emph{density} of $G$.
For constants $0 < \varepsilon, d < 1$,
we say that $G$ is \emph{$(d, \varepsilon)$-regular} if 
$$(1 - \varepsilon)d \le d_G(X_1, X_2) \le (1 + \varepsilon)d$$
whenever $|X_i| \ge \varepsilon |V_i|$ for $i = 1, 2$.
We say that $G$ is \emph{$(d, \varepsilon)$-superregular} if
$G$ is $(d, \varepsilon)$-regular and
$(1 - \varepsilon)d|V_i| \leq d_G(v, V_i) \leq (1+\varepsilon)d|V_i|$ for
every $v \in V_{3-i}$ and $i \in \{ 1,2 \}$. 
\begin{prp}
  \label{prp:super_reg}
  For any $0< \varepsilon <1$, if $G$ is a $(V_1, V_2)$-bipartite graph 
  such that $|V_1| = |V_2| = n$ and $\delta(G) \ge (1 - \varepsilon)n$ 
  then $G$ is $(1, \varepsilon^{1/2})$-superregular. 
\end{prp}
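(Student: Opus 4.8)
The plan is to verify the three defining conditions directly from the degree bound $\delta(G) \ge (1-\eps)n$, using $\eps' := \eps^{1/2}$ as the regularity parameter and $d = 1$. First I would check superregularity of individual vertices: for any $v \in V_{3-i}$ we trivially have $d_G(v, V_i) \le n = (1+\eps')\cdot 1 \cdot |V_i|$ (using the trivial upper bound, since the density is $1$), and $d_G(v, V_i) \ge (1-\eps)n \ge (1-\eps')n$ because $\eps \le \eps^{1/2} = \eps'$ for $\eps \le 1$. So the vertex-degree condition holds with room to spare.

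Next I would bound the density $d_G(X_1, X_2)$ for any $X_1 \subseteq V_1$, $X_2 \subseteq V_2$ with $|X_i| \ge \eps' |V_i| = \eps^{1/2} n$. The upper bound $d_G(X_1,X_2) \le 1 = (1+\eps')d$ is immediate since every pair contributes at most one edge. For the lower bound, count the non-edges between $X_1$ and $X_2$: each $x \in X_1$ has at most $n - \delta(G) \le \eps n$ non-neighbors in $V_2$, hence at most $\eps n$ non-neighbors inside $X_2$. Therefore the number of missing edges is at most $|X_1| \cdot \eps n$, so
\begin{equation*}
  d_G(X_1, X_2) \ge \frac{|X_1||X_2| - |X_1|\eps n}{|X_1||X_2|}
  = 1 - \frac{\eps n}{|X_2|} \ge 1 - \frac{\eps n}{\eps^{1/2} n} = 1 - \eps^{1/2} = (1 - \eps')d.
\end{equation*}
This establishes $(1, \eps^{1/2})$-regularity, and combined with the vertex-degree bounds from the previous paragraph gives $(1, \eps^{1/2})$-superregularity.

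There is essentially no obstacle here; the only mild subtlety is making sure the asymmetry of the bounds ($1$ as a free upper bound on the density, versus the genuine lower bound coming from $\delta(G)$) is handled, and that the inequality $\eps \le \eps^{1/2}$ is invoked correctly for $0 < \eps < 1$. One should also note the degenerate case $\eps' \ge 1$ cannot occur since $\eps < 1$ forces $\eps^{1/2} < 1$, so the definitions of regularity apply as stated. The whole argument is a few lines of elementary counting.
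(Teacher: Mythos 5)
Your proposal is correct and follows essentially the same route as the paper: the paper also dismisses the vertex-degree conditions as immediate and verifies $(1,\varepsilon^{1/2})$-regularity by the identical non-edge count, $d(X_1,X_2) \ge 1 - \varepsilon n/|X_2| \ge 1 - \varepsilon^{1/2}$. You merely spell out the superregularity degree bounds and the $\varepsilon \le \varepsilon^{1/2}$ step that the paper leaves implicit.
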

\begin{proof}
  It is clear that we only need to show that $G$ is $(1,\varepsilon^{1/2})$-regular.
  Let $X_i \subseteq V_i$ such that $|X_i| \ge \varepsilon^{1/2} n$ for
  $i \in \{1, 2\}$.
  We have that
  \begin{equation*}
    1 \ge d(X_1, X_2) \ge \frac{|X_1|(|X_2|-\varepsilon n)}{|X_1||X_2|}
    = 1 - \frac{\varepsilon n}{|X_2|} \ge 1 - \varepsilon^{1/2}. \qedhere
  \end{equation*}
\end{proof}

The following lemma follows immediately from 
the Chernoff type bounds on the hypergeometric
distribution (see Theorem 2.10 in \cite{luczak}).
\begin{lma}
  \label{lma:hoeffding}
  For every $0< \eta < 1$ 
  there exists $k = k(\eta) > 0$ such
  when $V$ is a set, $X \subseteq V$
  and $m$ is  a positive integer such that 
  $m \le |V|$ the following holds.
  If $U$ is selected uniformly at random from $\binom{V}{m}$, 
  then with probability at least $1 - e^{-km}$
  \begin{equation*}
  \frac{  |X|}{|V|} - \eta \le 
    \frac{|X \cap U|}{|U|} \le 
    \frac{|X|}{|V|} + \eta.
  \end{equation*}
\end{lma}

A partition of a set is \emph{equitable} if any two parts differ in size by
at most $1$.
\begin{prp}
  \label{prp:split}
  For every $\eta > 0$ there exist integers
  $k = k(\eta) > 0$ and $n_0 = n_0(\eta)$ such that 
  when $F$ is an $(A, B)$-bipartite graph with
  $|A| = |B| = n$ for $n \ge n_0$
  the following holds.
  If an equitable partition $\{A_1, A_2\}$ of $A$ and an 
  equitable partition $\{B_1, B_2\}$ of $B$ are both 
  chosen uniformly at random from all such partitions,
  then with probability at least $1 - e^{- k n}$ we have 
  \begin{equation*}
    d_F(A, B) - \eta \le d_F(A_i, B_j) \le  d_F(A, B) + \eta
  \end{equation*}
  for every $1 \le i, j \le 2$.
\end{prp}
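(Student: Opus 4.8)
The plan is to reduce everything to the single-vertex concentration bound in Theorem~\ref{thm:hoeffding}, applied once to the random partition of $A$ and once to the random partition of $B$, followed by a union bound over the $2n$ vertices and a short calculation with densities. Put $\eta' = \eta/3$ and let $k_0 = k(\eta') > 0$ be the constant supplied by Theorem~\ref{thm:hoeffding}. The first thing to note is that choosing an equitable partition $\{A_1, A_2\}$ of $A$ uniformly at random is, up to a relabelling of which part is which, the same as choosing $A_1$ uniformly from $\binom{A}{\lceil n/2 \rceil}$ and setting $A_2 = A \setminus A_1$; in particular each of $A_1, A_2$ is individually a uniformly random subset of $A$ of size $\lceil n/2 \rceil$ or $\lfloor n/2 \rfloor$, and the same holds for $B_1, B_2$.

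Fix $b \in B$ and $i \in \{1,2\}$, and apply Theorem~\ref{thm:hoeffding} with $V = A$, with $X$ the set of neighbours of $b$ in $A$ (so $|X| = d_F(b,A)$ and $|X \cap A_i| = d_F(b, A_i)$), and with $m = |A_i|$: with probability at least $1 - e^{-k_0 \lfloor n/2 \rfloor}$ we have
\[
\left| \frac{d_F(b, A_i)}{|A_i|} - \frac{d_F(b,A)}{n} \right| \le \eta'.
\]
Let $\mathcal{E}_A$ be the event that this bound holds for every $b \in B$ and every $i \in \{1,2\}$; by the union bound $\Pr[\mathcal{E}_A] \ge 1 - 2n\,e^{-k_0 \lfloor n/2\rfloor}$, and symmetrically the event $\mathcal{E}_B$ that $|d_F(a, B_j)/|B_j| - d_F(a,B)/n| \le \eta'$ for every $a \in A$ and $j \in \{1,2\}$ satisfies $\Pr[\mathcal{E}_B] \ge 1 - 2n\,e^{-k_0 \lfloor n/2\rfloor}$. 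Taking $k = k_0/3$ and $n_0 = n_0(\eta)$ large enough, $\mathcal{E}_A \cap \mathcal{E}_B$ holds with probability at least $1 - e^{-kn}$.

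It then remains to check that $\mathcal{E}_A \cap \mathcal{E}_B$ implies the stated inequality, and here it is cleanest to pass through the intermediate density $d_F(A, B_j)$ rather than comparing $d_F(A_i, B_j)$ with $d_F(A,B)$ directly. On $\mathcal{E}_B$, summing the bound over all $a \in A$ gives $|e_F(A, B_j) - (|B_j|/n)\,e_F(A,B)| \le \eta'\, n\, |B_j|$, and dividing by $n|B_j|$ yields $|d_F(A, B_j) - d_F(A,B)| \le \eta'$. On $\mathcal{E}_A$, summing the bound over all $b \in B_j$ gives $|e_F(A_i, B_j) - (|A_i|/n)\,e_F(A, B_j)| \le \eta'\, |A_i|\, |B_j|$, and dividing by $|A_i||B_j|$ yields $|d_F(A_i, B_j) - d_F(A, B_j)| \le \eta'$. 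Combining the two estimates with the triangle inequality gives $|d_F(A_i, B_j) - d_F(A,B)| \le 2\eta' < \eta$ for all $i, j \in \{1,2\}$, which is exactly the conclusion. The argument is routine; the only points that require a little care are verifying that the ``uniform equitable partition'' model really does put each part in $\binom{A}{\lceil n/2\rceil}$ or $\binom{A}{\lfloor n/2 \rfloor}$ uniformly, so that Theorem~\ref{thm:hoeffding} applies as stated, and arranging the error terms so that they remain of order $\eta$ after one divides the edge counts by $|A_i||B_j| = \Theta(n^2)$ --- which is precisely why the bookkeeping is routed through the quantity $d_F(A, B_j)$.
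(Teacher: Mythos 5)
Your proof is correct and follows essentially the same route as the paper: apply the hypergeometric Chernoff bound (Theorem~\ref{thm:hoeffding}) vertex-by-vertex to each random equitable partition, take a union bound over the $2n$ vertices, and then average the vertex-level estimates to pass to densities. The only cosmetic difference is that you make the two-step comparison explicit via the intermediate density $d_F(A,B_j)$ and a triangle inequality with error $\eta/3$, whereas the paper folds the same two estimates into one chained computation with error $\eta/2$ at each step.
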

\begin{proof}
  Choose partitions $\{A_1, A_2\}$ and
  $\{B_1, B_2\}$ as in the proposition and let $k = 5 k(\eta/2)$,
  where $k(\eta/2)$ is as in in Theorem~\ref{lma:hoeffding}, and assume that $n$ is sufficiently large.
  Let $1 \le i, j \le 2$.
  By Theorem~\ref{lma:hoeffding}, 
  for any $v \in A$ with probability at least 
  $1 - e^{- 5k |B_j|} \ge 1 - e^{- 2k n}$
  \begin{equation}
    \label{eq:density_inequalities}
   \frac{ d_F(v, B)}{|B|} - \frac{\eta}{2} \le \frac{d_F(v, B_j)}{|B_j|} \le \frac{d_F(v, B)}{|B|} + \frac{\eta}{2},
  \end{equation}
  and the analogous statement holds for every $v \in B$.
  So with probability at least 
  \begin{equation*}
    1 - 2ne^{-2kn} \ge 1 - e^{-kn} 
  \end{equation*}
   \eqref{eq:density_inequalities} holds for every $v \in V(G)$.  
  Therefore,
  \begin{equation*}
    \begin{split}
      d_F(A_i, B_j) &= 
      \frac{\sum_{v \in A_i}d_F(v, B_j)}{|A_i||B_j|} \ge 
      \frac{\sum_{v \in A_i}d_F(v, B)}{|A_i||B|} - \frac{\eta}{2} =
      \frac{\sum_{v \in B}d_F(v, A_i)}{|A_i||B|} - \frac{\eta}{2}  \\
      & \ge \frac{\sum_{v \in B}d_F(v, A)}{|A||B|} - \eta
      = d_F(A, B) - \eta.
    \end{split} 
  \end{equation*}
  By a similar computation, the upper bound also holds.
\end{proof}
\begin{thm}[K\"{u}hn and Osthus  \cite{KO-match}]
  \label{thm:matching}
  For all positive constants $ d, \xi_0 , \eta \leq 1$ there is a positive $\varepsilon =
  \varepsilon(d, \xi_0 , \eta)$ and an integer $n_0 = n_0(d, \xi_0 , \eta)$ such that the following holds for all $n \geq n_0$
  and all $\xi\geq \xi_0$. Let $G$ be a $(d, \varepsilon)$-superregular bipartite graph whose vertex
  classes both have size $n$ and let $F$ be a subgraph of $G$ with $e(F) = \xi e(G)$. Choose a
  perfect matching $M$ uniformly at random in $G$. Then with probability at
  least $1 -e^{-\varepsilon n}$
  we have 
  \begin{equation*} 
    \xi - \eta \leq \frac{|M \cap E(F)|}{|M|} \leq \xi + \eta.
  \end{equation*}
\end{thm}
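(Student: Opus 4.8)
The plan is to understand the full distribution of $|M\cap E(F)|$ through its moment generating function, which is a ratio of permanents, and then to apply Markov's inequality in each tail. Write $X,Y$ for the vertex classes of $G$, let $A\in\{0,1\}^{X\times Y}$ be the bipartite adjacency matrix of $G$ and $A_F$ that of $F$, and for $z>0$ set $A(z)=A+(z-1)A_F$, so that the $(x,y)$-entry of $A(z)$ is $z$ on $E(F)$, is $1$ on $E(G)\setminus E(F)$, and is $0$ otherwise. Since the perfect matchings of $G$ correspond to the nonzero terms of the permanent, $\mathrm{per}(A(z))=\sum_M z^{|M\cap E(F)|}$, the sum over perfect matchings $M$ of $G$ (write $M(x)$ for the neighbour of $x$ in $M$); hence for a uniformly random perfect matching $M$ we have $\mathbb{E}[z^{|M\cap E(F)|}]=\mathrm{per}(A(z))/\mathrm{per}(A)$. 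I would control this ratio using two classical permanent estimates: the Egorychev--Falikman theorem ($\mathrm{per}(B)\ge n!/n^n$ for every doubly stochastic $n\times n$ matrix $B$) and the Br\`egman--Minc inequality ($\mathrm{per}(C)\le\prod_x(r_x!)^{1/r_x}$ for every $\{0,1\}$-matrix $C$ with row sums $r_x$). For the denominator, superregularity makes all row and column sums of $A$ equal to $(1\pm\varepsilon)dn$, so a standard Sinkhorn-scaling argument combined with Egorychev--Falikman gives $\mathrm{per}(A)\ge(dn/e)^n e^{-c_1\varepsilon n}$ for an absolute constant $c_1$ and $n$ large; the key point is that this, and the Br\`egman bound below, carries only error $e^{O(\varepsilon n)}$, which is negligible against the $e^{\Theta(\eta n)}$ saving we will extract.

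The crux is an upper bound on $\mathrm{per}(A(z))$ for $z$ near $1$, and I expect this to be the hard part, because $F$ carries no regularity hypothesis and some vertices may have tiny $F$-degree. For $z\ge1$, expanding $\prod_x(1+(z-1)\mathbf{1}\{xM(x)\in E(F)\})$ over subsets of $X$ yields the identity $\mathrm{per}(A(z))=\sum_{T\subseteq X}(z-1)^{|T|}\mathrm{per}(C_T)$, where $C_T$ is the $\{0,1\}$-matrix whose $x$-th row is taken from $A_F$ if $x\in T$ and from $A$ otherwise; thus $\mathrm{per}(C_T)$ counts the perfect matchings of $G$ that match every vertex of $T$ along an edge of $F$. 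Applying Br\`egman--Minc to each $C_T$, together with the elementary estimate $(k!)^{1/k}\le k/e+O(\log(k+2))$ — so that a vertex of small $F$-degree costs only an additive $O(\log n)$ term rather than a multiplicative factor of $e$, which is exactly what would otherwise wreck the bound — gives, writing $d_F(x)$ for the $F$-degree of $x$,
\[
  \mathrm{per}(A(z))\le n^{O(1)}\prod_{x\in X}\frac{d_G(x)+(z-1)d_F(x)}{e}.
\]
Because $\sum_x d_G(x)=e(G)$, $\sum_x d_F(x)=e(F)=\xi e(G)$, and each $d_G(x)=(1\pm\varepsilon)dn$, the arithmetic--geometric mean inequality bounds the product by $((1+(z-1)\xi)(1+\varepsilon)dn)^n$. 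Dividing by the lower bound on $\mathrm{per}(A)$ and absorbing polynomial factors, we obtain $\mathbb{E}[z^{|M\cap E(F)|}]\le(1+(z-1)\xi)^n e^{c_2\varepsilon n}$ for an absolute constant $c_2$; for $0<z\le1$ the same bound follows by running the identical argument on $z^{-n}\mathrm{per}(A(z))=\mathrm{per}(A_F+z^{-1}A_{G\setminus F})$, with the roles of $E(F)$ and $E(G)\setminus E(F)$ interchanged.

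It remains to apply Markov's inequality. If $\xi+\eta\ge1$ the upper inequality in the statement is vacuous, so assume $\xi+\eta<1$ and take $z=1+\eta/2$: then
\[
  \mathbb{P}[|M\cap E(F)|\ge(\xi+\eta)n]\le\frac{\mathbb{E}[z^{|M\cap E(F)|}]}{z^{(\xi+\eta)n}}\le\frac{(1+\tfrac{\eta}{2}\xi)^n e^{c_2\varepsilon n}}{(1+\tfrac{\eta}{2})^{(\xi+\eta)n}}\le e^{(c_2\varepsilon-\eta^2/8)n},
\]
using $\log(1+\tfrac{\eta}{2}\xi)\le\tfrac{\eta}{2}\xi$ and $\log(1+\tfrac{\eta}{2})\ge\tfrac{\eta}{2}-\tfrac{\eta^2}{8}$; this is at most $e^{-\varepsilon n}$ once $\varepsilon$ is small enough in terms of $\eta$. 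For the lower tail, take $z=1-\eta/2\in(0,1)$, so that $|M\cap E(F)|\le(\xi-\eta)n$ is equivalent to $z^{|M\cap E(F)|}\ge z^{(\xi-\eta)n}$, and the same computation with $\mathbb{E}[z^{|M\cap E(F)|}]\le(1-\tfrac{\eta}{2}\xi)^n e^{c_2\varepsilon n}$ again yields probability at most $e^{-\varepsilon n}$ for $\varepsilon$ small in terms of $\eta$. A union bound over the two bad events (absorbing the factor $2$ into $\varepsilon$ for $n$ large) gives $|M\cap E(F)|\in[(\xi-\eta)n,(\xi+\eta)n]$ with probability at least $1-e^{-\varepsilon n}$, which, since $|M|=n$, is exactly the assertion; choosing $\varepsilon$ small and $n_0$ large in terms of $d,\xi_0,\eta$ completes the proof.
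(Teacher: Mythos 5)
You should first note that the paper does not prove this statement at all: it is imported verbatim from K\"uhn--Osthus \cite{KO-match}, so there is no internal proof to compare against. Your tilting scheme (writing $\mathbb{E}[z^{|M\cap E(F)|}]$ as $\mathrm{per}(A(z))/\mathrm{per}(A)$, bounding the numerator by the multilinear row expansion plus Br\`egman--Minc and AM--GM, and then applying Markov at $z=1\pm\eta/2$) is in the same spirit as the original argument, which also rests on Br\`egman for the upper counts and Egorychev--Falikman for the lower count; your MGF formulation is a clean repackaging, and I checked the expansion identity, the absorption of the $(k!)^{1/k}=k/e+O(\log k)$ errors into $n^{O(1)}$ (the exponent depends on $d$, which is harmless), the AM--GM step, the reduction of the case $z<1$ via $z^{-n}\mathrm{per}(A(z))=\mathrm{per}(A_F+z^{-1}A_{G\setminus F})$, and both tail computations; these are all correct.

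The genuine gap is the lower bound $\mathrm{per}(A)\ge(dn/e)^n e^{-c_1\varepsilon n}$. You justify it using only the consequence of superregularity that all row and column sums are $(1\pm\varepsilon)dn$, ``Sinkhorn scaling plus Egorychev--Falikman.'' Degree near-regularity alone cannot give any such bound: for $d\le 1/2$ take $S\subseteq X$ with $|S|=dn+1$ joined completely to a set $T\subseteq Y$ with $|T|=dn$, and join $X\setminus S$ to $Y\setminus T$ in a nearly biregular way of degree about $dn$; every degree is within $1$ of $dn$, yet $|N(S)|<|S|$, so $G$ has no perfect matching, $\mathrm{per}(A)=0$, and even the uniformly random perfect matching in the statement is undefined. (Such a graph of course fails the density condition on the pair $(S,Y\setminus T)$.) So the density half of $(d,\varepsilon)$-superregularity must enter exactly here, and your argument never uses it. The standard fix is the known lemma (Alon--R\"odl--Ruci\'nski, and used by K\"uhn--Osthus) that a $(d,\varepsilon)$-superregular pair contains a spanning $r$-regular subgraph with $r\ge(d-2\varepsilon)n$ --- the defect Hall/max-flow cut condition is verified by the regularity of linear-sized sets --- after which Egorychev--Falikman applied to that subgraph gives $\mathrm{per}(A)\ge\bigl((d-2\varepsilon)n/e\bigr)^n\ge(dn/e)^n e^{-c\varepsilon n/d}$; the $d$-dependence in the error is fine since $\varepsilon$ may depend on $d,\xi_0,\eta$. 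With that lemma invoked or proved in place of the Sinkhorn remark (which also settles existence of the random matching), the rest of your proof goes through.
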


\begin{prp}
  \label{prp:cyctri}
  Let $G$ be  an oriented graph and let
  $x \in V(G)$ and let $a,b,c \in N_G(x)$.
  If  $abc$ is a cyclic triangle in~$G$,
  then $xe$ is a transitive triangle
  for at least two edges $e \in \{ab, bc, ca\}$.
\end{prp}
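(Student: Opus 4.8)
The plan is to reduce the statement to the claim that \emph{at most one} of the three triangles $G[\{x,a,b\}]$, $G[\{x,b,c\}]$, $G[\{x,c,a\}]$ is cyclic. Since $x \in N_G(a)\cap N_G(b)\cap N_G(c)$ and $ab,bc,ca \in E(G)$, each of these three induced subgraphs is a tournament on three vertices, hence is either a transitive triangle or a cyclic triangle. Thus ``$xe$ is a transitive triangle for at least two $e\in\{ab,bc,ca\}$'' is literally equivalent to ``at most one of these three triangles is cyclic,'' and proving the latter suffices.

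First I would record the elementary observation that, because $ab \in E(G)$, the triangle $G[\{x,a,b\}]$ is cyclic if and only if $bx \in E(G)$ and $xa \in E(G)$: the only way to orient the two $x$-incident edges so that $\{x,a,b\}$ carries a directed $3$-cycle through $a\to b$ is $a\to b\to x\to a$, and conversely that orientation does produce a cyclic triangle. Cycling $a,b,c$, the same observation gives that $G[\{x,b,c\}]$ is cyclic iff $cx,xb \in E(G)$, and $G[\{x,c,a\}]$ is cyclic iff $ax,xc \in E(G)$.

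Next I would observe that no two of these three conditions can hold simultaneously, precisely because $G$ is an oriented graph, so at most one of the two possible edges between any pair of vertices is present. If $G[\{x,a,b\}]$ and $G[\{x,b,c\}]$ were both cyclic, then $bx,xb \in E(G)$; if $G[\{x,a,b\}]$ and $G[\{x,c,a\}]$ were both cyclic, then $xa,ax \in E(G)$; and if $G[\{x,b,c\}]$ and $G[\{x,c,a\}]$ were both cyclic, then $xc,cx \in E(G)$ — each a contradiction. Hence at most one of the three triangles is cyclic, and we are done.

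There is no serious obstacle here; the only point requiring care is the bookkeeping in the first step, namely correctly identifying which orientation of the two $x$-incident edges of a pair creates a directed triangle once the edge among $a,b,c$ is fixed. (An alternative route, if one prefers it, is to note that $G[\{x,a,b,c\}]$ is a tournament on four vertices, that such a tournament has at most $\binom{4}{3} - \sum_v \binom{d^+(v)}{2} \le 4 - 2 = 2$ cyclic triangles, and that $abc$ is already one of them; but the direct case check above is shorter and self-contained.)
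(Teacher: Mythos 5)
Your proof is correct, and it is organized differently from the paper's. The paper sets $i = d^+_G(x,\{a,b,c\})$, uses the symmetry of reversing all edges to identify the case $i$ with the case $3-i$, and then verifies the statement directly for $i=3$ and $i=2$, with the details omitted. You instead reduce the claim to showing that at most one of the three tournaments $G[\{x,a,b\}]$, $G[\{x,b,c\}]$, $G[\{x,c,a\}]$ is cyclic, observe that (with the edges $ab$, $bc$, $ca$ of the cyclic triangle fixed) cyclicity of each of these forces a specific orientation of both of its $x$-incident edges, and then check that any two of the three cyclicity conditions demand opposite orientations of the edge between $x$ and their common vertex of $\{a,b,c\}$, which is impossible in an oriented graph; all three pairwise checks you list are accurate. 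The trade-off: the paper's argument is a symmetry-plus-case-check sketch, while yours is fully written out, avoids any case analysis on $d^+_G(x,\{a,b,c\})$ and any appeal to edge reversal, and isolates the single fact doing the work (a cyclic triangle through a prescribed edge has its other two edges determined). Your parenthetical alternative is also sound: $G[\{x,a,b,c\}]$ is a tournament on four vertices, such a tournament has at most $\binom{4}{3}-\sum_v\binom{d^+(v)}{2}\le 2$ cyclic triangles, and $abc$ is already one of them, so at most one of the three triangles through $x$ can be cyclic.
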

\begin{proof}
  Let $i = d^+_G(x, \{a, b, c\})$.
  By symmetry, there are four cases depending on the value of $i$.
  Furthermore, 
  by reversing the edges of $G$ it is easy to see that the
  cases when $i=j$ are equivalent to the cases when $i = 3 - j$.
  It is easy to verify the statement when $i = 3$ and  when $i = 2$, we omit the details.
\end{proof}

We will use following lemma when finishing the proof of Lemma~\ref{lma:extremal}.
Lemma~\ref{lma:finish_ext} essentially states that if
a graph looks very similar to the graph depicted in Figure~\ref{fig:extremal}
except that $|W_1| + |W_2| + |W_3| = 2(|U_1| + |U_2|)$, 
then there exists a perfect $TT_3$-tiling of $G$.
%
\begin{lma}
  \label{lma:finish_ext}
  There exists a constant $\beta >0$ and an integer $n_0$ such that for
  any $m \in \N$ where $n = 18m \ge n_0$ 
  and any oriented graph $G$ on $n$ vertices the following holds.
  If there exists a partition $\{W_1, W_2, W_3, U\}$ of
  $V(G)$ such that 
  $|W_1| = |W_2| = |W_3| = 4m$, $|U| = 6m$
  and for every $i \in [3]$ and $w \in W_i$,
  \begin{equation}
    \label{eq:lma_deg_cond_W_to_W}
    d^+(w, W_{i+1}), d^-(w, W_{i-1}) \ge (4 - \beta) m \text{ and }
  \end{equation}
  \begin{equation}
    \label{eq:lma_deg_cond_W_to_U}
    d^+(w, U), d^-(w, U) \ge (3 - \beta) m;
  \end{equation}
  and, for every $u \in U$ and $i \in [3]$,
  \begin{equation}
    \label{eq:lma_deg_cond_U_to_W}
    d(u, W_i) \ge (4 - \beta)m,
  \end{equation}
  then $G$ contains a perfect $TT_3$-tiling.
\end{lma}
\begin{proof}
  Let $\eta = 1/12$, 
  $\varepsilon = \varepsilon(1, \eta/2, \eta/2)$,
  $\beta = \min\{ \varepsilon^2, 1/24\}$ 
  where $\varepsilon(1, \eta/2, \eta/2)$ is as in Theorem~\ref{thm:matching}.
  Assume $m$ is sufficiently large.
  Let $W = W_1 \cup W_2 \cup W_3$. 

  Let $F = E(G[W_1, W_2]\cup G[W_2, W_3]  \cup G[W_3, W_1])$ and define a bipartite graph $B$ with classes $U,F$ as follows.
A vertex $u\in U$ and an edge $xy\in F$ form an edge in $B$ if $uxy$ is a transitive triangle in $G$.
  Clearly $|F| \le 3(4m)^2 = 48m^2$.

  \begin{clm*}
    For every $u \in U$, $d_B(u) \ge (2/3 - \beta)48m^2$.
  \end{clm*}
  \begin{proof}
    Let $P(u)$ be the set of pairs of the form 
    $(e, abc)$ where 
    $a \in W_1 \cap N_G(u)$, 
    $b \in W_2 \cap N_G(u)$ and 
    $c \in W_3 \cap N_G(u)$,
    $abc$ is a cyclic triangle 
    and $e \in \{ab, bc, ca\} \cap N_B(u)$.
    By Proposition~\ref{prp:cyctri},
    for every $(e, abc) \in P(u)$
    the cyclic triangle $abc$ appears at least twice as the second element 
    of a pair in $P(u)$.
    Therefore, by \eqref{eq:lma_deg_cond_W_to_W} and \eqref{eq:lma_deg_cond_U_to_W},
    \begin{equation*}
      |P(u)| \ge 
      2\cdot(4 - \beta)m\cdot(4 - 2\beta)m\cdot(4 - 3\beta)m
      > (1 - 3\beta/2)128m^3.
   \end{equation*}
    Since any edge can appear as the first element
    in at most $4m$ of the pairs in $P(u)$,
    \begin{equation*}
      d_B(u) \ge 
      |P(u)|/(4m) \ge (1 - 3\beta/2)32m^2
      = (2/3 - \beta)48m^2. \qedhere
    \end{equation*}
  \end{proof}

  For every $u \in U$, let $F(u)$ be the graph on $W$
  with edge set
  $N_B(u)$.
  By Proposition~\ref{prp:split} and the union bound,
  there exists an 
  equitable partition $\{W^1_i, W^2_i\}$ of $W_i$ 
  for each $i \in [3]$,
  such that for every $u \in U$,
  \begin{equation*}
    d_{F(u)}(W^k_i, W^{\ell}_j) \ge d_{F(u)}(W_i, W_j) - \eta/2
  \end{equation*}
  for all $1 \le k,\ell \le 2$ and $j \in \{1,2,3\} - i$.
  Let $G_1 = G[W^1_1, W^1_2]$,
  $G_2 = G[W^2_2, W^2_3]$ and
  $G_3 = G[W^1_3, W^2_1]$.
  Note that $\delta(G_i) \ge (2 - \beta)m$ for every
  $i \in [3]$, so $G_i$
  is $(1, \beta^{1/2})$-superregular
  by Proposition~\ref{prp:super_reg}.
  Therefore, by Theorem~\ref{thm:matching} and the union bound, 
  there exists a perfect matching
  $M_i$ of $G_i$ such that
  \begin{equation*}
\frac{    |M_i \cap E(F(u))|}{2m}
    \ge \frac{|E(G_i) \cap E(F(u))|}{e(G_i)} - \frac{\eta}{2} 
    \ge d_{F(u)}(W_i, W_{i+1}) - \eta
  \end{equation*}
  for every $u \in U$ and $i \in [3]$.
  Note that, because $\beta^{1/2} \le \varepsilon(1, \eta/2, \eta/2)$,
  Theorem~\ref{thm:matching} may not apply when
  $|E(G_i) \cap E(F(u))|/e(G_i) \le \eta/2$, but in that case
  the inequality is vacuously true.
  Observe that  $M = M_1 \cup M_2 \cup M_3$
  is a perfect matching of $G[W]$ and 
  for $B' = B[U, M]$, and every
  $u \in U$
  \begin{equation*}
    \begin{split}
    \frac{  d_{B'}(u)}{|M|} &=
     \frac{ |M \cap E(F(u))|}{6m}       = \frac{1}{3}\frac{\sum_{i = 1}^3 |M_i \cap E(F(u))|}{2m} \\
     &\ge \frac{1}{3} {\sum_{i=1}^3 \left(d_{F(u)}(W_i, W_{i+1})-\eta\right)}
      = \frac{d_B(u)}{48m^2}  - \eta \ge
     \frac{ 2}{3} - (\beta + \eta).
    \end{split}
  \end{equation*}
  We also have that, by \eqref{eq:lma_deg_cond_W_to_U},
  for every $xy \in M$,
  \begin{equation*}
    d_{B'}(xy) \ge |N^+_G(x, U) \cap N_G(y, U)| \ge (3 - 3\beta)m > (1/2 - \beta)6m.
  \end{equation*}
  Note that since 
  \begin{equation*}
    2/3 - (\beta + \eta) + 1/2 - \beta \ge 7/6 - 2\beta - \eta \ge 1,
  \end{equation*}
  Proposition~\ref{prp:halls_matching} implies
  that $B'$ has a perfect matching.
  This perfect matching corresponds to a perfect $TT_3$-tiling of $G$.
\end{proof}


\begin{proof}[Proof of Lemma~\ref{lma:extremal}]
  Let $\beta$ be as in Lemma~\ref{lma:finish_ext}.
  Let $\tau = \beta/320$ and let $\alpha = \tau^3$.

  Let $\gamma > 0$ and 
  let $\s{W} = \{W_1, W_2, W_3\}$ be a collection of 
  three disjoint vertex subsets.
  We say that $v \in V(G)$ is \emph{$(i, \gamma)$-cyclic} for the
  triple $\s{W}$ if 
  \begin{equation*}
    d^+_G(v, W_{i-1}) + d_G(v, W_i) + d^-_G(v, W_{i+1}) \le \gamma n,
  \end{equation*}
  and that $v$ is \emph{$\gamma$-cyclic} for $\s{W}$ if
  $v$ is $(i, \gamma)$-cyclic for some $i$.
  The triple $\s{W}$ is $\gamma$-\emph{cyclic} if every vertex
  in $W_i$ is $(i, \gamma)$-cyclic for every $i \in [3]$.
  A vertex is \emph{$\gamma$-bad} for $\s{W}$ if it is not $\gamma$-cyclic. 
  The following claim follows from the preceding definition.
  \setcounter{clm}{0} 
  \begin{clm}
    \label{clm:bad_vertex}
    For any $1 > \gamma > \gamma' \ge 0$, if
    a vertex $v$ is $\gamma$-bad for $\{W_1, W_2, W_3\}$ and
    $|X| \le \gamma' n$, then 
    $v$ is $(\gamma - \gamma')$-bad for 
    $\{W_1 \setminus X, W_2 \setminus X, W_3 \setminus X\}$.
  \end{clm}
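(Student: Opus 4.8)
The plan is to exploit the fact that deleting a set $X$ from each of $W_1, W_2, W_3$ can only \emph{decrease} each of the three summands in the definition of $(i,\gamma)$-cyclic, while the total decrease, summed over the three summands, is at most $|X| \le \gamma' n$. So the quantity controlling $(i,\gamma)$-cyclicity drops by at most $\gamma' n$, turning the bound $> \gamma n$ (which holds for every $i$, since $v$ is $\gamma$-bad) into $> (\gamma - \gamma')n$ for every $i$, which is exactly the statement that $v$ is $(\gamma-\gamma')$-bad for the truncated triple.

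In detail, I would fix $i \in [3]$ and use the elementary monotonicity estimates: for any vertex $v$, any $S \subseteq V(G)$ and any $T \subseteq S$ we have $d^+_G(v, S \setminus T) \ge d^+_G(v, S) - |T|$, $d^-_G(v, S \setminus T) \ge d^-_G(v, S) - |T|$, and, since $G$ is an oriented graph and hence no vertex of $S$ is counted more than once in $d_G(v,S)$, also $d_G(v, S \setminus T) \ge d_G(v,S) - |T|$. Applying these with $(S,T)$ equal to $(W_{i-1}, X \cap W_{i-1})$, $(W_i, X \cap W_i)$, and $(W_{i+1}, X \cap W_{i+1})$, and adding, I get
\begin{equation*}
\begin{split}
d^+_G(v, W_{i-1}\setminus X) &+ d_G(v, W_i\setminus X) + d^-_G(v, W_{i+1}\setminus X) \\
&\ge \Big( d^+_G(v, W_{i-1}) + d_G(v, W_i) + d^-_G(v, W_{i+1})\Big) - \sum_{j\in[3]}|X\cap W_j|.
\end{split}
\end{equation*}
Since $W_1, W_2, W_3$ are pairwise disjoint, $\sum_{j\in[3]}|X\cap W_j| \le |X| \le \gamma' n$. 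As $v$ is $\gamma$-bad for $\{W_1,W_2,W_3\}$, the parenthesized term on the right exceeds $\gamma n$, so the left-hand side exceeds $\gamma n - \gamma' n = (\gamma - \gamma')n$. Because $i \in [3]$ was arbitrary, $v$ fails to be $(\gamma-\gamma')$-cyclic for $\{W_1\setminus X, W_2\setminus X, W_3\setminus X\}$, i.e.\ it is $(\gamma-\gamma')$-bad for that triple.

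There is essentially no real obstacle here; the claim is a bookkeeping observation. The only two points worth a sentence are that the combined-degree term $d_G(v, W_i)$ decreases by at most $|X \cap W_i|$ rather than $2|X \cap W_i|$ (which is where orientedness, i.e.\ at most one edge between any two vertices, is used), and that the three pieces $X \cap W_j$ are disjoint so their sizes genuinely sum to at most $|X|$ rather than to $3|X|$.
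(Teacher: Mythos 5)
Your proof is correct and is exactly the bookkeeping the paper has in mind: the paper does not even write out an argument, stating only that the claim ``follows from the preceding definition,'' and your monotonicity estimates (including the observation that orientedness makes $d_G(v,\cdot)$ drop by at most one per deleted vertex, and that the sets $X\cap W_j$ are disjoint) fill in precisely that routine verification. Nothing further is needed.
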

  For any $\lambda$,
  we say that $\s{W}$ is \emph{$\lambda$-equitable} if 
  \begin{equation*}
    ||W_i| - |W_j|| \le \lambda n \text{ for every $i,j \in [3]$}
  \end{equation*}
  and $|V(\s{W})| \ge (2/3 - \lambda)n$.
  Note that this implies that
  $|W_i| \ge (2/9 - \lambda)n$ for every $i \in [3]$.

  Let $\s{W} = \{W_1, W_2, W_3\}$ be a $\lambda$-equitable triple 
  and let $v \in V(G)$
  be $(i, \gamma)$-cyclic for $\s{W}$.
  By the degree condition,
  \begin{equation*}
    \begin{split}
      d^-_G(v, W_{i-1}) + d^+_G(v, W_{i+1}) &=
      d_G(v, V(\s{W})) - 
      \left( d^+_G(v, W_{i-1}) + d_G(v, W_i) + d^-_G(v, W_{i+1})\right)\\
      & \ge |V(\s{W})| - 2n/9 - \gamma n.
    \end{split}
  \end{equation*}
  Therefore, since $|W_1|,|W_2|,|W_3| \ge (2/9 - \lambda)n$, 
  we have the following:
  \begin{equation}
    \label{eq:good_degs}
    \begin{split}
      d^-_G(v, W_{i-1}) &\ge 
      |W_{i-1}| + |W_i| - 2n/9 - \gamma n \ge
      |W_{i-1}| - (\gamma + \lambda) n, \\
      d^+_G(v, W_{i+1}) &\ge |W_{i+1}| - (\gamma + \lambda) n \text{ and }   \\
      d^-_G(v, W_{i-1}), d^+_G(v, W_{i+1}) &\ge (2/9 - 2\lambda - \gamma)n.
    \end{split}
  \end{equation}

  \begin{clm}
    \label{clm:tt3_exists}
    Let $0<\gamma <1/27$ and let 
    $\s{W} = \{W_1, W_2, W_3\}$ be such that
    $\s{W}$ is both $\gamma$-cyclic and $\gamma$-equitable.
    If $v \in V(G)$ such that
    there are no transitive triangles in $G[\{v\} \cup W]$ that
    contain~$v$, then $v$ is $0$-cyclic for $\s{W}$.
  \end{clm}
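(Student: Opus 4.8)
The plan is to argue by contrapositive: assuming $v$ is not $0$-cyclic for $\s{W}$, I will produce a transitive triangle in $G[v \cup W]$ containing $v$. Since $\s{W}$ is $\gamma$-cyclic, the vertex $v$ is $(i,\gamma)$-cyclic for some $i \in [3]$, say after relabelling $i = 1$; I may also assume $v \notin V(\s{W})$, or if $v \in W_j$ adjust indices accordingly. Write $a = d^+_G(v, W_3) + d_G(v, W_1) + d^-_G(v, W_2)$ for the relevant quantity, so by hypothesis $0 < a \le \gamma n$ but $a > 0$. The goal is to show that any of these "extra" edges can be extended to a transitive triangle using the robust majority of neighbours guaranteed by \eqref{eq:good_degs}.

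First I would set up the three cases according to which of the three terms contributing to $a$ is nonzero. If $d_G(v, W_1) > 0$, pick $w \in W_1 \cap N_G(v)$. By \eqref{eq:good_degs} applied with $\lambda = \gamma$ (since $\s{W}$ is $\gamma$-equitable), $v$ has at least $|W_2| - 2\gamma n$ out-neighbours in $W_2$, and since every vertex of $W_1$ has nearly all of $W_2$ in its out-neighbourhood (as $\s{W}$ is $\gamma$-cyclic: $w$ is $(1,\gamma)$-cyclic, so $d^-_G(w, W_2) \le \gamma n$ forces $d^+_G(w, W_2) \ge |W_2| - \gamma n$), the two out-neighbourhoods $N^+_G(v) \cap W_2$ and $N^+_G(w) \cap W_2$ overlap — their sizes sum to more than $|W_2|$ once $3\gamma n < |W_2|$, which holds since $|W_2| \ge (2/9 - \gamma)n$ and $\gamma < 1/27$. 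Picking $z$ in the intersection, if the edge between $v$ and $w$ and the edge between $w$ and $z$ orient correctly we directly get a transitive triangle $v, w, z$ (or $w, v, z$); because $z \in N^+(v) \cap N^+(w)$ and $w \in N(v)$, the triangle $\{v,w,z\}$ is a tournament with $z$ a sink relative to both $v$ and $w$, hence it is transitive. The cases $d^+_G(v, W_3) > 0$ and $d^-_G(v, W_2) > 0$ are symmetric: take the witnessing edge, find the common robust neighbourhood in the appropriate $W_j$ using \eqref{eq:good_degs} and the $\gamma$-cyclicity of the witness vertex, pick a common neighbour, and check that the induced tournament on the three vertices is transitive.

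The main point to verify carefully is that in each case the three vertices genuinely induce a \emph{transitive} triangle rather than a cyclic one — i.e. that the orientations forced by "$z$ lies in two out-neighbourhoods" (or two in-neighbourhoods) really do preclude a directed $3$-cycle. This is the routine but essential orientation bookkeeping: a tournament on $\{v,w,z\}$ in which $z$ is dominated by both $v$ and $w$ cannot be cyclic, and similarly if $z$ dominates both. The only genuine obstacle is making sure the edge between $v$ and the witness $w$ exists and is counted consistently (it does, since $w$ was chosen from $N_G(v)$), and that the numerical slack $|W_j| > 3\gamma n$ holds, which follows from $\gamma$-equitability and $\gamma < 1/27$. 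Hence $a = 0$, i.e. $v$ is $0$-cyclic for $\s{W}$, completing the proof.
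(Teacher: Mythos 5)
Your argument breaks at its very first step. You assume that, because $\s{W}$ is $\gamma$-cyclic, the vertex $v$ itself is $(i,\gamma)$-cyclic for some $i$, and every subsequent estimate about $v$ (in particular that $v$ has at least $|W_2|-2\gamma n$ out-neighbours in $W_2$, which you extract from \eqref{eq:good_degs}, and your bound $a\le\gamma n$) rests on that assumption. But ``$\s{W}$ is $\gamma$-cyclic'' only constrains vertices lying \emph{inside} $W_1\cup W_2\cup W_3$: it says that each vertex of $W_i$ is $(i,\gamma)$-cyclic, and says nothing about an arbitrary $v\in V(G)$. In the claim $v$ is typically not in $V(\s{W})$ at all, and in the paper the claim is applied precisely to vertices that are \emph{not} $\gamma$-cyclic (for instance, in its contrapositive form, to the $\tau$-bad vertices $z\in Z'$ in order to produce a transitive triangle through them), so your extra hypothesis cannot simply be added. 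Without it the key step fails: knowing only that $w\in W_1\cap N_G(v)$, the set $N^+_G(v)\cap N^+_G(w)\cap W_2$ may be empty, because nothing controls how the edges between $v$ and $W_2$ are oriented --- indeed, pinning down that orientation pattern is exactly what the claim is meant to prove, so assuming it is circular.

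For comparison, the paper's proof never uses directional information about $v$. Using $\delta^+(G)\ge 7n/18$ and $|W|>11n/18$ it finds an out-neighbour $x\in N^+_G(v,W_{i+1})$ for some $i$ and sets $I_x=N^-_G(x,W_i)$; if $v$ were not $(i,0)$-cyclic there would be a second witness $y$, yielding a set $I_y\subseteq W_{i-1}\cup W_{i+1}$. These sets are designed so that \emph{any} neighbour of $v$ in $I_x\cup I_y$, whatever the orientation of the connecting edge, closes a transitive triangle, because the third vertex is a common head or a common tail. Since $I_x$ and $I_y$ are disjoint with $|I_x|,|I_y|\ge(2/9-3\gamma)n$, the vertex $v$ would have to be non-adjacent to more than $2n/9$ vertices of $W$ (this is where $\gamma<1/27$ enters), contradicting $d_G(v,W)\ge |W|+\delta(G)-n\ge |W|-2n/9$. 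Note also that a single large witness set of size about $(2/9-3\gamma)n$ would not beat the $2n/9$ non-neighbour allowance, so even after removing the unjustified assumption, your one-witness version cannot be closed by this degree count; two disjoint witness sets are genuinely needed.
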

  \begin{proof}
    Since $|V(\s{W} ) | \ge (2/3 - \gamma)n > 11n/18$, 
    there exists an $x \in N^+_G(v, W_{i+1})$ for some $i \in [3]$.
    Let $I_x = N^-_G(x, W_i)$.
    By \eqref{eq:good_degs},
    $|I_x| \ge (2/9 - 3\gamma)n$.
    Suppose that $v$ is not $(i, 0)$-cyclic, i.e.\ there exists 
    \begin{equation*}
      y \in N^+_G(v, W_{i-1} \cup W_i) \cup N^-_G(v, W_i \cup W_{i+1}).
    \end{equation*}
    If $y \in N^+_G(v, W_{i-1} \cup W_{i})$, then let
    $I_y = N^-_G(y, W_{i+1} \cup W_{i-1})$ and
    if $y \in N^-_G(v, W_{i} \cup W_{i+1})$, then let
    $I_y = N^+_G(y, W_{i+1} \cup W_{i-1})$.
    Again by \eqref{eq:good_degs}, we have that
    $|I_y| \ge (2/9 - 3 \gamma)n$.
    Note that $v$ has no neighbors in $I_x \cup I_y$, because any 
    such neighbor would imply a 
    transitive triangle containing $v$ in $G[\{v\} \cup W]$.
    Also note that $I_x \subseteq W_i$ and 
    $I_y \subseteq W_{i+1} \cup W_{i-1}$, so
    $I_x$ and $I_y$ are disjoint, and
    \begin{equation*}
      |W| + \delta(G) - n \le d_G(v, W) \le |W| - |I_x| - |I_y| \le 
      |W| - (4/9 - 6 \gamma)n < |W| - 2n/9,
    \end{equation*}
    a contradiction.
  \end{proof}

  Recall, since $G$ is $\alpha$-extremal there exists $W \subseteq V(G)$ 
  such that $|W| \ge (2/3 - \alpha)n$ and $G[W]$ does not
  contain any transitive triangles.
  \begin{clm} \label{clm:blowup}
    There exists a $0$-cyclic partition $\s{W} = \{W_1, W_2, W_3\}$ of $W$
    such that for every $i \in [3]$
    \begin{equation*}
      (2/9 - \alpha)n \le |W_i| \le 2n/9.
    \end{equation*}
  \end{clm}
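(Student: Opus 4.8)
The plan is to take the three classes to be the out-neighbourhoods inside $W$ of a cyclic triangle of $G[W]$, to check that this already gives an almost-$0$-cyclic partition, and then to upgrade it to an honest $0$-cyclic partition of $W$ using Claim~\ref{clm:tt3_exists}. First I record the structure forced by $\delta^0(G)\ge 7n/18$ and the absence of transitive triangles in $G[W]$. For any $v\in W$ we have $d_G(v,W)\ge 2\delta^0(G)-|V(G)\setminus W|=|W|-2n/9$. If $I\subseteq W$ is independent in $G[W]$ then, taking $v\in I$, $N_G(v)\cap W\subseteq W\setminus I$, so $|I|\le|W|-d_G(v,W)\le 2n/9$. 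Since $N^+_G(v)\cap W$ and $N^-_G(v)\cap W$ are independent in $G[W]$ (an edge inside either closes a transitive triangle at $v$), each has size at most $2n/9$; as they partition $N_G(v)\cap W$ and their union has size at least $|W|-2n/9$, we get $|W|\le 2n/3$ and $(2/9-\alpha)n\le|N^+_G(v)\cap W|,|N^-_G(v)\cap W|\le 2n/9$ for every $v\in W$. Finally $e(G[W])=\tfrac12\sum_{v\in W}d_G(v,W)\ge\tfrac12|W|(|W|-2n/9)>|W|^2/4$ (as $|W|>4n/9$), so by Mantel's theorem $G[W]$ contains a triangle, which is cyclic since $G[W]$ has no transitive triangle; fix one, say $v_1\to v_2\to v_3\to v_1$.

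Put $X_1=N^+_G(v_3)\cap W$, $X_2=N^+_G(v_1)\cap W$, $X_3=N^+_G(v_2)\cap W$ and $W''=X_1\cup X_2\cup X_3$. Then $v_i\in X_i$, each $X_i$ is independent with $(2/9-\alpha)n\le|X_i|\le2n/9$, the $X_i$ are pairwise disjoint (for an edge $p\to q$ of $G[W]$ one has $N^+_G(p)\cap N^+_G(q)\cap W=\emptyset$, else $pqz$ is a transitive triangle), and $|W\setminus W''|\le3\alpha n$. I claim $\{X_1,X_2,X_3\}$ is $12\alpha$-cyclic; by the cyclic symmetry of the construction it suffices to show each $x\in X_2$ satisfies $d^+_G(x,X_1)+d_G(x,X_2)+d^-_G(x,X_3)\le12\alpha n$. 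The middle term vanishes. For $d^-_G(x,X_3)$: the edge $v_1\to x$ makes $N^-_G(x)\cap W$ disjoint from $N^-_G(v_1)\cap W$ (forbidden triangles $zv_1x$ and $v_1zx$), while the forbidden-triangle relations for the three edges of the cyclic triangle put both $X_3$ and $N^-_G(v_1)\cap W$ inside $W\setminus(X_1\cup X_2)$, a set of size at most $2n/9+2\alpha n$; since $|X_3|,|N^-_G(v_1)\cap W|\ge(2/9-\alpha)n$, inclusion–exclusion gives $|X_3\setminus N^-_G(v_1)|\le4\alpha n$, so $d^-_G(x,X_3)=|N^-_G(x)\cap X_3|\le4\alpha n$. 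For $d^+_G(x,X_1)$: the edge $v_1\to x$ forces $N^+_G(x)\cap W\subseteq W\setminus X_2$, hence $d^+_G(x,X_1)+d^+_G(x,X_3)\le|N^+_G(x)\cap W|\le2n/9$; on the other hand $d_G(x,W)\ge|W|-2n/9$ together with $d_G(x,X_1)\le2n/9$, $d_G(x,W\setminus W'')\le3\alpha n$ and $d^-_G(x,X_3)\le4\alpha n$ yields $d^+_G(x,X_3)\ge2n/9-8\alpha n$, whence $d^+_G(x,X_1)\le8\alpha n$. So $\{X_1,X_2,X_3\}$ is $12\alpha$-cyclic, and it is plainly $12\alpha$-equitable.

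Since $\alpha$ is small enough, Claim~\ref{clm:tt3_exists} applies (every $x\in W$ has no transitive triangle with $W''\subseteq W$), and shows each $x\in W''$ is $0$-cyclic for $\{X_1,X_2,X_3\}$; comparing $d_G(x,W'')\ge4n/9-O(\alpha n)$ with the $12\alpha$-cyclic bound rules out the wrong index, so $x$ is $(i,0)$-cyclic for the $i$ with $x\in X_i$. Thus $\{X_1,X_2,X_3\}$ is already a $0$-cyclic partition of $W''$ with all parts of size in $[(2/9-\alpha)n,2n/9]$. Finally, add the at most $3\alpha n$ vertices of $W\setminus W''$ one at a time: for the current vertex $u$ the current partition is still $0$-cyclic and equitable, so Claim~\ref{clm:tt3_exists} gives an index $i$ with $u$ being $(i,0)$-cyclic, and adding $u$ to the $i$-th part keeps that part independent (hence of size at most $2n/9$) and preserves $0$-cyclicity of every vertex, because $u$ being $(i,0)$-cyclic says precisely that $u$ sends no ``wrong'' edge to any part. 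This produces the required $0$-cyclic partition of $W$.

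The step I expect to be most delicate is the verification that the out-neighbourhood partition $\{X_1,X_2,X_3\}$ is $\gamma$-cyclic: disjointness and the size bounds are immediate from the definition, but bounding the wrong-direction degrees $d^+_G(x,X_1)$ and $d^-_G(x,X_3)$ requires combining all three forbidden-transitive-triangle relations for edges of $G[W]$ simultaneously with the exact bound $2n/9$ on independent sets, and one must keep every error term linear in $\alpha$ so that Claim~\ref{clm:tt3_exists} can still be invoked. The remaining ingredients — the structural preliminaries, the existence of a cyclic triangle via Mantel, and the absorption-style cleanup — are routine.
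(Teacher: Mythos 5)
Your proof is correct, and it shares the paper's overall skeleton — find a cyclic triangle $v_1v_2v_3$ in $G[W]$, build the three classes from neighborhoods of its vertices, and absorb the leftover vertices of $W$ one at a time via Claim~\ref{clm:tt3_exists} — but the middle step is executed differently. The paper takes the double intersections $\widehat{W}_i = N^+_{G[W]}(v_{i-1}) \cap N^-_{G[W]}(v_{i+1})$; with that choice independence and \emph{exact} $0$-cyclicity are immediate from $TT_3$-freeness (an edge inside a class, or from $\widehat{W}_{i-1}$ to $\widehat{W}_{i+1}$, would close a transitive triangle with one of the $v_j$), and the only quantitative input is the lower bound $|\widehat{W}_i| \ge (2/9-2\alpha)n$. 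You instead take the single out-neighborhoods $X_i = N^+(v_{i-1}) \cap W$, which are larger (so less is left to absorb) and are trivially disjoint and independent, but only \emph{approximately} cyclic; you must then establish $12\alpha$-cyclicity by the degree-counting argument and upgrade it to exact $0$-cyclicity through an extra application of Claim~\ref{clm:tt3_exists} plus the index-pinning observation that $d^+_G(x, X_{i+1}) \ge (2/9 - 8\alpha)n > 0$ for $x \in X_i$. Both routes work: the paper's definition buys exact cyclicity structurally at the cost of slightly smaller initial classes, while yours trades a simpler class definition for the additional counting and upgrading step. Two of your asides are welcome extras the paper leaves implicit: the Mantel-type justification that a cyclic triangle exists in $G[W]$, and the remark that when a leftover vertex is added as an $(i,0)$-cyclic vertex, the $0$-cyclicity of previously placed vertices is automatically preserved because the ``wrong-edge'' conditions are symmetric in the two endpoints.
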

  \begin{proof}
    Let $G' = G[W]$ and
    note that
    \begin{equation}
      \label{eq:blowup_deg}
      \delta(G') \ge \delta(G) + |W| - n \ge |W| - 2n/9.
    \end{equation}
    Since $G'$ is $TT_3$-free, 
    for every $v \in W$ the sets $N^+_{G'}(v)$ and $N^-_{G'}(v)$
    are independent.
    This with \eqref{eq:blowup_deg} implies
    that both sets are of order at most $2n/9$ 
    and hence that 
    \begin{equation}
      \label{eq:blowup_semideg}
      \delta^0(G') \ge \delta(G') - 2n/9 \ge |W| - 4n/9.
    \end{equation}

    Since $G'$ is $TT_3$-free 
    there exists a cyclic triangle $w_1w_2w_3$ in $G'$.
    This also implies that, for any $i \in [3]$,
    the set $\widetilde{W}_i = N^+_{G'}(w_{i-1}) \cup N^-_{G'}(w_{i+1})$
    is disjoint from $N_{G'}(w_i)$.
    Hence, by \eqref{eq:blowup_deg}, $|\widetilde{W}_i| \le 2n/9$.
    Define $\widehat{W}_i = N^+_{G'}(w_{i-1}) \cap N^-_{G'}(w_{i+1})$.
    Then we have that $\widehat{W}_i$ is an independent set and,
    by \eqref{eq:blowup_semideg},
    \begin{equation*}
      2n/9 \ge 
      |\widehat{W}_i| \ge 
      d^+_{G'}(w_{i-1}) + d^-_{G'}(w_{i+1}) - |\widetilde{W}_i| \ge 
      2|W| - 10n/9 \ge (2/9 - 2\alpha)n.
    \end{equation*}
    Note that 
    \begin{equation*}
      \overrightarrow{E}_{G'}(\widehat{W}_{i-1}, \widehat{W}_{i+1}) \subseteq 
      \overrightarrow{E}_{G'}(N^-_{G'}(w_i), N^+_{G'}(w_i)) = \emptyset.
    \end{equation*}
    This gives us that
    $\widehat{\s{W}} = (\widehat{W}_1, \widehat{W}_2, \widehat{W}_3)$
    is $0$-cyclic.

    Let $X = W \setminus V(\widehat{\s{W}})$.
    By repeatedly applying Claim~\ref{clm:tt3_exists}, we can
    iteratively add each $x \in X$ to a set $\widehat{W}_i$ 
    for which $x$ is $(i, 0)$-cyclic for $\widehat{\s{W}}$.
    Let $\s{W} = \{W_1, W_2, W_3\}$ be the resulting collection.
    For every $i \in [3]$, the set $W_i$ is independent, so $|W_i| \le 2n/9$ by~\eqref{eq:blowup_deg} and moreover,
    because $|W| \ge (2/3 - \alpha)n$, 
    $|W_i| \ge (2/9 - \alpha)n$,
    So $\s{W}$ is the desired partition of $W$.
  \end{proof}

  Let $U = V(G) \setminus W$.
  If $v \in V(G)$ is $(i, \gamma)$-cyclic for $\s{W}$, then
  \begin{equation}
    \begin{split}
    \label{eq:in_out_good_to_U}
    d^+_G(v, U),d^-_G(v,U) &\ge 
    \delta^0(G) - \max\{|W_{i+1}|, |W_{i-1}|\} - \gamma n \\
    &\ge (1/6 - \gamma)n \ge |U|/2 - (\alpha/2 + \gamma)n.
  \end{split}
  \end{equation}
  We also have that, 
 \begin{equation}
    \label{eq:good_to_U}    
      d_G(v, U) \ge 
      \delta(G) - (|W \setminus W_i| + d_G(v, W_i))  \\
       \ge {7n}/9 - 4n/9 - \gamma n = 
      |U| - (\alpha + \gamma)n.    
  \end{equation}

  By Claim~\ref{clm:blowup}, we
  can apply \eqref{eq:good_to_U} with $\gamma = 0$ to give us that
  \begin{equation*}
    e_G(W, U) \ge (|U| - \alpha n)|W| > |U||W| - \alpha n^2.
  \end{equation*}
  Defining
  $
  Z = \{u \in U : d_G(u, W) < |W| - \tau n \},
  $
  we have that, since $\tau^3 = \alpha$,
  \begin{equation}
    \label{eq:size_of_Z}
    |Z| < \tau^{2} n.
  \end{equation}
  Let $Z(i)$ be the set of vertices in 
  $Z$ that are $(i, \tau)$-cyclic for $\s{W}$.
  Clearly $Z(1), Z(2)$ and $Z(3)$ are disjoint.
  Let $Z'' = \bigcup_{i=1}^3 Z(i)$,
  $W_i' = W_i \cup Z(i)$, 
  $\s{W}' = (W_1', W_2', W_3')$,
  $W' = V(\s{W}') = W \cup Z''$,
  $U' = U \setminus Z''$ and
  $Z' = Z \setminus Z''$.
  Note that, for every $i \in [3]$, 
  $(2/9 - \alpha)n \le |W_i'| \le 2n/9 + |Z|$ so
  $\s{W}'$ is $(2\tau^2)$-equitable and 
  that every vertex in $W_i'$ is $(i, \tau)$-cyclic for $\s{W}$.
  Since $|W' \setminus W| \le |Z|$,
  this implies that $\s{W'}$ is $(2\tau)$-cyclic.
  We also have that for every $u \in U' \setminus Z'$,
  \begin{equation}
    \label{eq:u_to_Wp}
    d_G(u, W') \ge |W| - \tau n \ge |W'| - |Z| - \tau n \ge |W'| - 2\tau n.
  \end{equation}

  We will now find three 
  collections $\s{T}_1$, $\s{T}_2$, $\s{T}_3$ of disjoint 
  transitive triangles.
  We define $X_i = V(\s{T}_i)$ and
  $Y_i = \bigcup_{j = 1}^{i} X_j$.
  The collections will be constructed so that 
  the sets $X_1, X_2$, and $X_3$ are disjoint.
  The collections will also have the following properties:
  \begin{enumerate}[(P\arabic*)]
    \item
      \label{prop:P1}
      $|W' \setminus Y_i| = 2|U' \setminus Y_i|$ for $i \in \{1,2,3\}$,
    \item
      \label{prop:P2}
      $Z' \subseteq  X_2$,
    \item
      \label{prop:P3}
      $|Y_3| \le \tau n$,
    \item
      \label{prop:P4}
      $|W_1' \setminus Y_3| = |W_2' \setminus Y_3| = |W_3' \setminus Y_3|$ and
    \item
      \label{prop:P5}
      $|V(G) \setminus Y_3|$ is divisible by $18$.
  \end{enumerate}
  Assume we have such collections.
  First, we will show that $G - Y_3$ has a $TT_3$-tiling
  by showing that $G - Y_3$ satisfies the
  conditions of Lemma~\ref{lma:finish_ext} with
  $\beta = 320 \tau = 16 \cdot 20 \tau$
  and 
  $\{W_1' \setminus Y_3, 
  W_2' \setminus Y_3, 
  W_3' \setminus Y_3, 
  U' \setminus Y_3
  \}$
  the required partition of $V(G - Y_3)$.
  To see this, first note that, by \ref{prop:P5}, there exist
  $m$ such that $|G - Y_3| = 18m$, and
  by \ref{prop:P1} and \ref{prop:P4},
  $|W_i' \setminus Y_3| = 4m$ for every $i \in [3]$
  and $|U' \setminus Y_3| = 6m$ and, 
  by \ref{prop:P3}, $m \ge n/20$.
  Since $\s{W'}$ is $(2\tau)$-cyclic and $(2\tau^2)$-equitable,
  \eqref{eq:good_degs} implies that
  for every $w \in W_i' \setminus Y_3$ and $i \in [3]$, 
  $w$ meets condition \eqref{eq:lma_deg_cond_W_to_W}.
  Furthermore, \eqref{eq:good_to_U} implies
  that $w$ also meets condition \eqref{eq:lma_deg_cond_W_to_U}.
  Finally, \ref{prop:P2} and \eqref{eq:u_to_Wp},
  imply that every $u \in U' \setminus Y_3$, 
  meets condition \eqref{eq:lma_deg_cond_U_to_W}.

  We begin the construction by finding a collection $\s{T}_1$ such that 
  $|W' \setminus X_1| = 2 |U' \setminus X_1|$.
  Call a transitive triangle $T$ \emph{standard} if
  $|V(T) \cap W'|=2$ and $|V(T) \cap U'|=1$. 
  Every transitive triangle $T \in \s{T}_2 \cup \s{T}_3$ will be standard 
  and this will give us Property~\ref{prop:P1}.

  Let $c = |W'| - 2n/3$
  and note that $-\alpha n \le c \le |Z''| \le \tau^2 n$, so $|c| \le \tau^2 n$.
  Simple computations show that 
  the following claim gives the desired collection $\s{T}_1$.
  Indeed,
  if $c > 0$, then $|W'| - 3c =  2(n/3 - c) = 2|U'|$, and 
  if $c < 0$, then $|W'| - |c|  =  2(n/3 - |c|) = 2(|U'| - 2|c|)$.
  \begin{clm}
    There exists a collection $\s{T}_1$ of $|c|$ disjoint transitive triangles
    such that for every $T \in \s{T}_1$,
    \begin{itemize}
      \item
        if $c > 0$, $T \subseteq G[W']$; and 
      \item
        if $c < 0$, $|V(T) \cap W'| = 1$ 
        and $|V(T) \cap (U \setminus Z)| = 2$.
    \end{itemize}
  \end{clm}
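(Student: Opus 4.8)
The plan is to handle the two non-trivial cases $c>0$ and $c<0$ separately (when $c=0$ take $\s{T}_1=\emptyset$), in each case building the transitive triangles one at a time; since the total number of vertices used is at most $3|c|\le 3\alpha n$, which is negligible next to all the neighbourhoods that occur, a greedy construction never stalls for lack of room. The only inputs needed are the global bound $\delta(G)\ge 7n/9$, the estimates \eqref{eq:good_degs}, \eqref{eq:in_out_good_to_U} and \eqref{eq:u_to_Wp}, and Proposition~\ref{prp:deg_implies_matching}.

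\emph{The case $c>0$.} Here $|W'|=2n/3+c$, so writing $t_j=\max\{0,|W'_j|-2n/9\}$ we have $t_1+t_2+t_3\ge |W'|-2n/3=c$. For $v\in W'_j$ we get $d_{G[W'_j]}(v)\ge \delta(G)-|V(G)\setminus W'_j|\ge 7n/9-(n-|W'_j|)=|W'_j|-2n/9$, hence $\delta(G[W'_j])\ge t_j$, and since $t_j$ is tiny compared with $|W'_j|/2$, Proposition~\ref{prp:deg_implies_matching} gives a matching of size $\ge t_j$ in $G[W'_j]$. Choosing $c$ edges among these three matchings yields $c$ pairwise vertex disjoint edges (the parts $W'_j$ are disjoint). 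For a chosen edge $uv\subseteq W'_j$, say with $u\to v$, the fact that $\s{W}'$ is $(2\tau)$-cyclic and $(2\tau^2)$-equitable gives (via \eqref{eq:good_degs}) $d^+_G(u,W'_{j+1}),d^+_G(v,W'_{j+1})\ge(2/9-O(\tau))n$ while $|W'_{j+1}|\le(2/9+O(\tau))n$, so $N^+_G(u)\cap N^+_G(v)\cap W'_{j+1}$ has $\ge(2/9-O(\tau))n$ vertices and we may pick $w$ there avoiding the at most $3c$ already used vertices; then $\{u,v,w\}$ is a transitive triangle inside $G[W']$. This produces $\s{T}_1$.

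\emph{The case $c<0$.} Now $|c|=2n/3-|W'|\le\alpha n$, and each triangle of $\s{T}_1$ must meet $W'$ in exactly one vertex and $U\setminus Z$ in two. Every $u\in U\setminus Z$ has $d_G(u,U)\ge\delta(G)-|W|\ge n/9$, so $\delta(G[U\setminus Z])\ge n/9-|Z|\ge n/9-\tau^2 n$, and deleting the at most $3|c|$ used vertices leaves a graph on $U\setminus Z$ with positive minimum degree, hence still with an edge. Call $x\in U\setminus Z$ an \emph{almost-sink} if $d^+_G(x,W')\le 5\tau n$ and an \emph{almost-source} if $d^-_G(x,W')\le 5\tau n$; by \eqref{eq:u_to_Wp} ($d_G(x,W')\ge|W'|-2\tau n$) no vertex is both. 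The crux is that among the remaining vertices there is an edge $xy$ of $G[U\setminus Z]$, say $x\to y$, with $x$ not an almost-sink or $y$ not an almost-source: otherwise every remaining vertex lies in the almost-sink set $A$ or the almost-source set $B$, and taking $x\in A$ (resp.\ $y\in B$) the bound $d^+_G(x)\ge 7n/18$ (resp.\ $d^-_G(y)\ge 7n/18$) forces $x$ to have $\ge 7n/18-O(\tau n)$ available out-neighbours in $U\setminus Z$, all lying in $B$ (resp.\ in-neighbours in $A$), whence $|A|,|B|\ge 7n/18-O(\tau n)$ — impossible since $A,B$ are disjoint subsets of $U$ and $|U|\le(1/3+\alpha)n$. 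For such an edge $x\to y$, both $x,y$ are adjacent to all but $\le 2\tau n$ of $W'$, and one of them has at most $|W'|-5\tau n$ neighbours of the wrong type in $W'$, so $N_G(x)\cap N_G(y)\cap W'$ contains a vertex $v$ that is unused and not in the cyclic configuration $v\to x\to y\to v$; then $\{v,x,y\}$ is the triangle we want. Iterating $|c|$ times gives $\s{T}_1$.

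The main obstacle is the case $c>0$: the naive greedy extraction of transitive triangles from $G[W']$ stalls once $|W'|$ drops to $2n/3$, i.e.\ after only about $c/3$ triangles, whereas $\s{T}_1$ must contain $c$ of them. The resolution is to notice that an oversized part $W'_j$ inherits minimum degree $\ge|W'_j|-2n/9$ from the global degree condition, and that a graph of minimum degree $d$ has a matching of size $\sim d$ (not $\sim d/2$); summing over $j$ this yields the full $c$ disjoint within-part edges, each trivially completed to a transitive triangle via the next part. The only other delicate point, in the case $c<0$, is the counting argument above showing that the edges of $G[U\setminus Z]$ cannot all be uncompletable.
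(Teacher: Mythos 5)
Your proof is correct. For $c>0$ it is essentially the paper's argument: the oversized parts $W_i'$ inherit minimum degree $|W_i'|-2n/9$ from $\delta(G)\ge 7n/9$, Proposition~\ref{prp:deg_implies_matching} yields the within-part matchings summing to at least $c$, and each matched edge is completed inside $W'$ via the large common (out- or in-) neighbourhood in the adjacent part guaranteed by \eqref{eq:good_degs}. For $c<0$ you take a genuinely different route. The paper first builds a matching of size $|c|$ in $G[U\setminus Z]$, extends each edge to a transitive triangle with an \emph{arbitrary} third vertex outside $Z$ (using Proposition~\ref{prp:deg}, which gives at least $n/6$ choices per edge), and then repairs any triangle lying entirely in $U\setminus Z$ by swapping in a common neighbour in $W'$ (available by \eqref{eq:u_to_Wp}) together with Proposition~\ref{prp:TT4}. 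You instead extend greedily and \emph{directly} into $W'$, justifying at each step the existence of a usable edge via your almost-sink/almost-source dichotomy; the dichotomy argument is sound, but it is heavier than necessary: since $|U|\le(1/3+\alpha)n<7n/18$ while $\delta^0(G)\ge 7n/18$, every $x\in U\setminus Z$ automatically satisfies $d^+_G(x,W')\ge 7n/18-|U'|\gg 5\tau n$ (and symmetrically for in-degrees), so no almost-sinks or almost-sources exist at all and every remaining edge of $G[U\setminus Z]$ extends into $W'$ by the intersection count you give. So your approach buys a one-pass construction without the repair step, at the cost of a (removable) case analysis, while the paper's build-then-repair version trades that for a short application of Proposition~\ref{prp:TT4}.
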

  \begin{proof}
    First assume $c > 0$ and
    let $I = \{i \in [3] : |W_i'| > 2n/9\}$ 
    and $c_i = |W_i'| - \floor{2n/9}$ for $i \in I$. Note
    that, by Claim~\ref{clm:blowup}, $|W_i| \le 2n/9$, so 
    the fact that $|Z''| \le \tau^2 n$ implies that
    $c_i \le \tau^2 n$.
    For every $i \in I$, by the degree condition,
    we have that $\delta(G[W_i']) \ge c_i$.
    Therefore, Proposition~\ref{prp:deg_implies_matching} implies
    that there exists a matching $M_i$ of size $c_i$ in $G[W_i']$.
    For every $xy \in M_i$,
    $x$ and $y$ are $(i, 2\tau)$-cyclic for $\s{W'}$
    and $\s{W'}$ is an $(2 \tau^2)$-equitable triple.  
    So, by \eqref{eq:good_degs}, 
    \begin{equation*}
      |N^-_G(x, W_{i-1}') \cap N^-_G(y, W_{i-1}')|
      \ge |W_{i-1}'| - 4(\tau + \tau^2)n,
    \end{equation*}
    and, 
    similarly,  $|N^+_G(x, W_{i+1}') \cap N^+_G(y, W_{i+1}')| \ge 
    |W_{i+1}'| - 4(\tau + \tau^2)n$.
    Therefore, we can easily 
    match the edges $\bigcup_{i\in I} M_i$ to vertices 
    in $W'$ so that the matching corresponds
    to disjoint transitive triangles in $G$.
    Since $\sum_{i \in I} c_i \ge c$ we have
    the desired collection $\s{T}_1$.

    Now assume $c < 0$.
    Let $U'' = U \setminus Z = U' \setminus Z'$.
    By \eqref{eq:size_of_Z},
    we have that $|U''| \ge (1/3 - \tau^{2})n$ so 
    by the degree condition, $\delta(G[U'']) \ge (1/9 - \tau^{2})n$ 
    and there exists a matching $M$ of order
    $|c| \le \tau^2 n$ in $G[U'']$.
    By Proposition~\ref{prp:deg}, every $e \in E(G)$ has $n/6$ vertices 
    $v$ such that $ev$ is a transitive triangle.
    Therefore,  we can match 
    each edge $e \in M$ to a vertex $v_e \in  V(G) \setminus Z$ 
    so that the $ev_e$ are disjoint transitive triangles.
    Let $\s{T}_1'$ be this collection of transitive triangles.
    Suppose that there exists $T \in \s{T}_1'$
    such that $V(T) \subseteq U''$. 
    By Proposition~\ref{prp:TT4} and
    the fact that, by \eqref{eq:u_to_Wp},
    $|\bigcap_{v \in V(T)}N_G(v, W')| \ge |W'| - 6\tau n$, 
    it is trivial to replace $T$ with a transitive triangle
    that has one vertex in $W'$ and an edge from $E(T)$
    and is also disjoint from $V(\s{T}_1' - T)$.
    By replacing every such triangle in this manner,  
    we can create the desired collection $\s{T}_1$.
  \end{proof}

  We now aim to find a collection $\s{T}_2$ of standard transitive
  triangles that satisfies Property~\ref{prop:P2}.
  Note that, by the definition of $Z'$, every vertex 
  in $Z'$ is $\tau$-bad for $\s{W}$ and hence
  is $\tau$-bad for $\s{W}'$.
  The following claim then follows from
  Claim~\ref{clm:bad_vertex} and Claim~\ref{clm:tt3_exists}.
  \begin{clm}
    There exists a collection $\s{T}_2$ of 
    $|Z' \setminus X_1|$ disjoint standard transitive triangles in $G - X_1$
    such that $|T \cap Z'| = 1$ for every $T \in \s{T}_2$.
  \end{clm}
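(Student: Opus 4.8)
The plan is to build $\s{T}_2$ greedily, extracting one standard transitive triangle through each vertex of $Z'\setminus X_1$ while avoiding everything used so far. First record two easy facts: $Z'\cap W'=\emptyset$, since $Z'\subseteq U$ is disjoint from $W$ and $Z'=Z\setminus Z''$ is disjoint from $Z''$, hence $Z'$ is disjoint from $W'=W\cup Z''$; and $|X_1|=3|c|\le 3\tau^2 n$, because $|c|\le\tau^2 n$ and $\s{T}_1$ consists of $|c|$ triangles. Fix an arbitrary order on $Z'\setminus X_1$ and process its vertices one at a time. When we come to a vertex $z$, let $Y$ be the union of $X_1$ with the vertex sets of the triangles of $\s{T}_2$ already constructed; each of those is standard with its unique $U'$-vertex in $Z'\setminus\{z\}$ and its other two vertices in $W'$, so $Z'\cap W'=\emptyset$ together with $z\notin X_1$ gives $z\notin Y$. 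Moreover at most $|Z'|\le\tau^2 n$ triangles have been built, so $|Y|\le 3\tau^2 n+3\tau^2 n<\tau n$ throughout.

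Now I would invoke the two cited claims in turn. By the remark preceding the claim, $z$ is $\tau$-bad for $\s{W}$, hence also for $\s{W}'$ (enlarging the parts of a triple cannot make a vertex cyclic). Applying Claim~\ref{clm:bad_vertex} to the triple $\s{W}'$, the set $Y$, and the parameters $\gamma=\tau$, $\gamma'=|Y|/n\le 6\tau^2$ (note $\gamma>\gamma'\ge 0$ since $\tau<1/6$), we find that $z$ is $(\tau-6\tau^2)$-bad, and in particular not $0$-cyclic, for the triple $\s{W}_Y:=(W_1'\setminus Y,\,W_2'\setminus Y,\,W_3'\setminus Y)$. On the other hand, deleting from the parts of $\s{W}'$ a set of at most $\tau n$ vertices preserves the cyclicity of each surviving vertex (the relevant degree sum only decreases) and changes the part sizes by at most $\tau n$; since $\s{W}'$ is $(2\tau)$-cyclic and $(2\tau^2)$-equitable, the triple $\s{W}_Y$ is $(2\tau)$-cyclic and $(2\tau)$-equitable, and $2\tau<1/27$. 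The contrapositive of Claim~\ref{clm:tt3_exists} then yields a transitive triangle $T_z$ in $G[\,z\cup V(\s{W}_Y)\,]$ that contains $z$.

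Finally, $T_z$ has exactly the required shape. Its two vertices other than $z$ lie in $W'\setminus Y$, so $T_z$ avoids $X_1$ and every triangle already placed in $\s{T}_2$. Since $z\in Z'\subseteq U'$ while $W'$ is disjoint from $U'$ (the only vertices of $W'$ lying in $U$ are those of $Z''$, which is disjoint from $U'$), the triangle $T_z$ has one vertex in $U'$ and two in $W'$, i.e.\ it is standard; and, using $Z'\cap W'=\emptyset$ once more, $T_z\cap Z'=\{z\}$. Add $T_z$ to $\s{T}_2$ and move to the next vertex; after exhausting $Z'\setminus X_1$ we obtain the claimed collection of $|Z'\setminus X_1|$ disjoint standard transitive triangles in $G-X_1$, each meeting $Z'$ in a single vertex.

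I do not expect any genuine obstacle: the proof is simply ``weaken the badness of $z$, then read off a triangle'' iterated over $Z'\setminus X_1$, and the only point requiring care is the arithmetic that $|Y|$ stays below $\tau n$ and that the degraded cyclicity and equitability parameters remain below the threshold $1/27$ demanded by Claim~\ref{clm:tt3_exists} --- both hold with ample room because $\tau=\beta/288$ and $\beta\le 1/24$.
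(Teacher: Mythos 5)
Your proposal is correct and matches the paper's argument: both rest on the observation that each $z \in Z'$ is $\tau$-bad for $\s{W}'$, that the set of already-used vertices has size at most about $6\tau^2 n < \tau n$, and then on applying Claim~\ref{clm:bad_vertex} followed by (the contrapositive of) Claim~\ref{clm:tt3_exists} to extract a standard transitive triangle through $z$ avoiding everything used so far. The only difference is presentational — you build the collection greedily, while the paper takes a maximal collection and derives a contradiction if some $z \in Z'$ is uncovered — which is the same idea.
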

  \begin{proof}
    Let $\s{T}_2$ be a 
    collection of disjoint standard transitive triangles in $G - X_1$
    such that for every $T \in \s{T}_2$,
    $|V(T) \cap Z'| = 1$.
    Let $Y_2 = V(\s{T}_2) \cup X_1$.
    Suppose that $|\s{T}_2|$ is maximal among all such collections
    and that there exists $z \in Z' \setminus Y_2$.
    Since $z$ is $\tau$-bad for $\s{W}'$,
    by Claim~\ref{clm:bad_vertex}
    and the fact that
    $|Y_2| < |X_1| + 3|Z'| < 6 \tau^2 n < \tau n$, $z$ is $0$-bad for 
    $\{
      W_1' \setminus Y_2,
      W_2' \setminus Y_2,
      W_3' \setminus Y_2
    \}$.
    Hence, by Claim~\ref{clm:tt3_exists}, 
    there exists a transitive triangle containing $z$ and two vertices
    in $V(\s{W}') \setminus Y_2'$. 
    Adding $T$ to $\s{T}_2$ contradicts the maximality of $|\s{T}_2|$.
  \end{proof}
  Let $W_i'' = W_i' \setminus Y_2$ for every $i \in [3]$.
  Since $\s{W'}$ is $(2\tau^2)$-equitable and
  $|Y_2| \le |X_1| + 3 |Z'| \le 6 \tau^2$,
  the collection
  $\s{W}'' = \{W_1'', W_2'', W_3''\}$ is $(8\tau^{2})$-equitable.

  Because $|\s{T}_1 \cup \s{T}_2| \le 2|Z| \le 2\tau^2 n$, 
  if we we can find a collection $\s{T}_3$ of at most 
  $17 \tau^2 n \le \tau n/3 - 2\tau^2 n$
  disjoint standard transitive triangles in $G - Y_2$
  that satisfies \ref{prop:P4} and \ref{prop:P5},
  we will also satisfy Property~\ref{prop:P3}.
  This is quite easy to do, as we now describe.

  Let $\pi$ be a permutation of $[3]$ such that
  $|W_{\pi(1)}''| \le |W_{\pi(2)}''| \le |W_{\pi(3)}''|$.
  Let $M_1$, $M_2$ and $M_3$ be disjoint edge sets such that
  their union is a matching and
  \begin{itemize}
    \item
      $|M_1| = |W_{\pi(3)}''| - |W_{\pi(2)}''|$, 
      $|M_2| = |W_{\pi(3)}''| - |W_{\pi(1)}''|$,
    \item
      $M_1 \subseteq E_G(W_{\pi(3)}'', W_{\pi(1)}'')$,
      $M_2 \subseteq E_G(W_{\pi(3)}'', W_{\pi(2)}'')$ and
    \item
      $M_3$ consists of three edges, one from 
      each of $E(W_i'', W_{i+1}'')$ for $i \in [3]$. 
  \end{itemize}
  Let $M' = M_1 \cup M_2$ and $M = M' \cup M_3$.
  Note that since $\s{W}''$ is $(8 \tau^2)$-equitable,
  $|M'| < |M| \le 2(8\tau^{2}n) + 3 \le 17 \tau^2 n$.
  Let $vv' \in M$.
  Since $v$ and $v'$ are both $\tau$-cyclic for $\s{W}$,
  \eqref{eq:in_out_good_to_U} and \eqref{eq:good_to_U} give
  us that the number of vertices $x \in U$ such that $xvv'$ is a transitive triangle is at least 
  \begin{equation*}
    |N^-_G(v, U) \cap N_G(v', U)|
    \ge n/6 - \alpha n - 2 \tau n.
  \end{equation*}
  Therefore, we can find the desired collection $\s{T}_3$ by 
  matching edges $vv'$ in either $M$ or $M'$ to unused vertices $x$ in $U'$
  such that $vv'u$ is a transitive triangle.
  We can clearly satisfy Property~\ref{prop:P4}.
  Note that Properties~\ref{prop:P1} and~\ref{prop:P4} imply that 
  $|V(G) \setminus Y_3| \in 9 \Z$.
  So we can satisfy Property~\ref{prop:P5}
  by picking $M$ or $M'$ appropriately.
\end{proof}

{\bf Acknowledgement.} We are thankful to Wojciech Samotij, who participated in many fruitful discussions in the beginning of the project.
We would also like to thank an anonymous referee for carefully reading
this manuscript and for providing many helpful comments.

This research was carried out whilst the second author was visiting the Department of Mathematics of the University of Illinois at Urbana--Champaign.
This author would like to thank the department for the hospitality he received.

The authors were also supported by the BRIDGE strategic alliance between the University of Birmingham and the University of Illinois at Urbana-Champaign, as part of the `Building Bridges in Mathematics' BRIDGE Seed Fund project.
The second author was supported by the University of Birmingham North America 
Travel Fund and the University of Birmingham Transatlantic Collaboration Fund.


\end{document}